\theoremstyle{change} 
\newenvironment{proof}{\noindent \bf Proof:   \rm}{\hspace*{\fill}
$\square$ \medskip}
\newtheorem{lemma}{Lemma}
\newtheorem{proposition}[lemma]{Proposition}
\newtheorem{Facts}[lemma]{Facts}
\newtheorem{notation}[lemma]{Notation}
\newtheorem{corollary}[lemma]{Corollary}
\newtheorem{theorem}[lemma]{Theorem}
\newtheorem{definition}[lemma]{Definition}
\newtheorem{remark}[lemma]{Remark}
\newtheorem{example}[lemma]{Example}
\DeclareMathAlphabet{\mathpzc}{OT1}{pzc}{m}{it}
   \mathchardef\mhyphen="2D
  \newcommand{\ring}[1]{\mathsf{#1}}
\newcommand{\eq}[1][r]
   {\ar@<-3pt>@{-}[#1]
    \ar@<-1pt>@{}[#1]|<{}="gauche"
    \ar@<+0pt>@{}[#1]|-{}="milieu"
    \ar@<+1pt>@{}[#1]|>{}="droite"
    \ar@/^2pt/@{-}"gauche";"milieu"
    \ar@/_2pt/@{-}"milieu";"droite"}
    \DeclareMathSymbol{\Alpha}{\mathalpha}{operators}{"41}
\DeclareMathSymbol{\Beta}{\mathalpha}{operators}{"42}
\DeclareMathSymbol{\Epsilon}{\mathalpha}{operators}{"45}
\DeclareMathSymbol{\Zeta}{\mathalpha}{operators}{"5A}
\DeclareMathSymbol{\Eta}{\mathalpha}{operators}{"48}
\DeclareMathSymbol{\Iota}{\mathalpha}{operators}{"49}
\DeclareMathSymbol{\Kappa}{\mathalpha}{operators}{"4B}
\DeclareMathSymbol{\Mu}{\mathalpha}{operators}{"4D}
\DeclareMathSymbol{\Nu}{\mathalpha}{operators}{"4E}
\DeclareMathSymbol{\Omicron}{\mathalpha}{operators}{"4F}
\DeclareMathSymbol{\Rho}{\mathalpha}{operators}{"50}
\DeclareMathSymbol{\Tau}{\mathalpha}{operators}{"54}
\DeclareMathSymbol{\Chi}{\mathalpha}{operators}{"58}
\DeclareMathSymbol{\omicron}{\mathord}{letters}{"6F}
\begin{document}

\title{Topological rigidity as a monoidal equivalence}   
\author{Laurent Poinsot\footnote{Second address: CREA, 
French Air Force Academy, 
Base a\'erienne 701,
13661 Salon-de-Provence,
France.}}         
\affil{LIPN - UMR CNRS 7030\\
University Paris 13, Sorbonne Paris Cit\'e\\
93140 Villetaneuse, France\\\quad\\
laurent.poinsot@lipn.univ-paris13.fr}
 
 \date{}                                  

\maketitle

\begin{abstract}
A topological commutative ring  is said to be {\em rigid} when for every set $X$, the topological dual  of the $X$-fold topological product  of the ring is isomorphic to the free module  over $X$.  Examples are fields with a ring topology, discrete rings, and normed algebras. Rigidity translates into a dual equivalence between categories of free modules and of ``topologically-free'' modules and, with a suitable topological tensor product for the  latter, one proves that it lifts to  an equivalence between monoids in this category (some suitably generalized topological algebras) and coalgebras. In particular, we provide a description of its relationship with the standard duality between algebras and coalgebras, namely finite duality. 

\noindent\textbf{Keywords:} Topological dual space, topological basis, coalgebras, finite duality.\\
\textbf{MSC classification:} 13J99, 54H13, 46A20,  19D23. 
\end{abstract}



\section{Introduction}\label{sec:intro}

The main result of~\cite{Poinsot_Rigidity} states that given a (Hausdorff\footnote{All topologies will be assumed separated.}) topological field $(\mathbb{k},\tau)$, for every set $X$, the topological dual $((\mathbb{k},\tau)^X)'$ of the $X$-fold topological product $(\mathbb{k},\tau)^X$ is isomorphic to the vector space $\mathbb{k}^{(X)}$ of finitely-supported $\mathbb{k}$-valued maps defined on $X$ (i.e., those maps $X\xrightarrow{f}\mathbb{k}$ such that for all but finitely many members $x$ of $X$, $f(x)=0$). 

Actually this topological property of {\em rigidity} is shared by more general topological (commutative unital) rings\footnote{In this contribution, every ring is assumed commutative and unital (see Section~\ref{conv:convention1}).} than only  topological fields (a fact not noticed in~\cite{Poinsot_Rigidity}). For instance any discrete  ring is rigid in the above sense (see Lemma~\ref{lem:rigidity_of_discrete_rings}). And even if not all topological rings are rigid (see Section~\ref{sec:counterexample} for a counter-example), many of them still are (e.g., every real or complex normed commutative algebra). 

It is our intention to study in more details some consequences of the property of rigidity for arbitrary commutative rings in particular for some of their topological algebras\footnote{The results of the present contribution also serve in a subsequent paper under preparation about topological semisimplicity of commutative topological algebras.}. So far, for a topological ring $(\ring{R},\tau)$, rigidity reads as $((R,\tau)^X)'\simeq R^{(X)}$ (here, and everywhere else, $R$ stands for the canonical left $\ring{R}$-module structure on the underlying abelian group of $\ring{R}$) for each set $X$. Suitably topologized (see Section~\ref{subsec:algebraic_dual_functor}), the algebraic dual $(R^{(X)})^*$ turns out to be isomorphic to $(R,\tau)^X$. 

More appropriately   the above correspondence may be upgraded into a  dual equivalence of categories\footnote{A {\em dual} equivalence is an equivalence between a category and the opposite  of another.} between free  and topologically-free modules, i.e., those topological modules isomorphic to some $(R,\tau)^X$ (Theorem~\ref{thm:equiv_of_cats}) under the algebraic and topological dual functors. (This extends a similar interpretation from~\cite{Poinsot_Rigidity} to the realm of arbitrary commutative rigid rings.)

Under the rigidity assumption, the aforementioned dual equivalence enables to provide a topological tensor product $\ostar_{(\ring{R},\tau)}$ for topological\-ly-free $(\ring{R},\tau)$-modules by transporting the algebraic tensor product $\otimes_{\ring{R}}$ along the dual equivalence.  It turns out that $\ostar_{(\ring{R},\tau)}$ is (coherently) associative, commutative and unital, i.e., makes monoidal the category of topologically-free modules (Proposition~\ref{prop:ostar_is_a_coherent_monoidal_functor}). Not too surprisingly the above dual equivalence remains well-behaved, i.e., monoidal,  with respect to the (algebraic and topological) tensor products (Theorem~\ref{thm:monoidal_equivalence}). According to the theory of monoidal categories, this in turn provides a dual equivalence between monoids in the tensor category of topologically-free modules (some suitably generalized topological algebras) and coalgebras (Corollary~\ref{cor:induced_equivalence_between_top_mon_and_coalgebras}).  So there are two constructions: a {\em topological dual coalgebra} of a monoid (in the tensor category of topologically-free modules) and an {\em algebraic dual monoid} of a coalgebra, and these constructions are inverse one from the other (up to isomorphism).

There already exists a standard duality theory between algebras and coalgebras, over a field, known as {\em finite duality}  but contrary to our ``topological duality'' it is merely an adjunction, not an equivalence. One discusses how these dualities interact  
(see Section~\ref{subsub:relation_with_finite_duality}) and in particular one proves that the algebraic dual monoid of a coalgebra essentially corresponds to  its finite dual  (Section~\ref{paragraph:relation_with_algebraic_dual}), that over a discrete field, the topological dual coalgebra of a monoid  is a subcoalgebra of the finite dual coalgebra of its underlying  algebra  and furthermore that they are equal exactly when finite duality provides an equivalence of categories (Theorem~\ref{thm:findual}).

\section{Conventions, notations and basic definitions}\label{sec:conventions}

\subsection{Conventions}\label{conv:convention1}

Except as otherwise stipulated, all topologies are Hausdorff, and every ring is assumed  unital and commutative\footnote{To the contrary an algebra over a ring won't be assumed commutative, but  associative and unital.}. 


For a ring $\mathsf{R}$\index{R1@$\ring{R}$}, $R$\index{R2@$R$} denotes both its underlying set and the canonical left $\ring{R}$-module structure on its underlying additive group. Likewise if $\ring{A}$ is an $\ring{R}$-algebra, then $A$ is both its underlying set and its underlying $\ring{R}$-module. The unit of a ring $\mathsf{R}$ (resp., unital algebra $\ring{A}$) is either denoted by $1_{\mathsf{R}}$\index{$1_{\ring{R}}$} (resp. $1_{\ring{A}}$). A ring map (or morphism of rings) is assumed to preserve the units. 




A product of topological spaces always has the product topology. When for each $x\in X$, all $(E_x,\tau_x)$'s are equal to the same topological space $(E,\tau)$, then the $X$-fold topological product $\prod_{x\in X}(E_x,\tau_x)$ is canonically identified with the set $E^X$ of all maps from $X$ to $E$ equipped with the topology of simple convergence, and is denoted by $(E,\tau)^X$. Under this identification, the canonical projections\index{Canonical projections} $(E,\tau)^X\xrightarrow{\pi_x}(E,\tau)$\index{$\pi_x$} are given by $\pi_x(f)=f(x)$, $x\in X$, $f\in E^X$.
\subsection{Basic definitions}
\begin{definition}\label{def:top_ring_top_field}
Let $\ring{R}$ be a ring. A (Hausdorff, following our conventions) topology  $\tau$ of (the carrier set of) the ring is called a {\em ring topology}\index{Ring topology} when addition, multiplication and additive inversion of the ring are continuous. 
By {\em topological ring}\index{Topological ring} $(\ring{R},\tau)$\index{R3@$(\ring{R},\tau)$} is meant a ring together with a ring topology $\tau$ on it\footnote{In view of Section~\ref{conv:convention1}, the multiplication of a topological ring is jointly continuous.}.  By a {\em field with a ring topology}\index{Field with a ring topology}, denoted $(\mathbb{k},\tau)$, is meant a topological ring $(\mathbb{k},\tau)$ with $\mathbb{k}$ a field.  
\end{definition}

\begin{example}
A ring $\ring{R}$ with the discrete topology $\mathsf{d}$ is a topological ring. 
\end{example}

Let  $(\ring{R},\tau)$ be a topological ring.
A pair $(M,\sigma)$ consisting of a (left and unital\footnote{Unital means that the scalar action of the unit of $\ring{R}$ is the identity on the module.}) $\ring{R}$-module $M$ and a topology $\sigma$ on $M$ which makes continuous the addition, opposite and scalar multiplication $R\times M\to M$, is called a {\em topological $(\ring{R},\tau)$-module}\index{Topological module}. Such a topology is referred to as a {\em $(\ring{R},\tau)$-module topology}\index{Module topology}. In particular, when $\ring{R}$ is a field $\mathbb{k}$, then this provides {\em topological $(\mathbb{k},\tau)$-vector spaces}\index{Topological vector space}. 
Given  topological $(\mathsf{R},\tau)$-modules $(M,\sigma),(N,\gamma)$, a {\em continuous $(\ring{R},\tau)$-linear map} $(M,\sigma)\xrightarrow{f}(N,\gamma)$ is a $R$-linear map $M\xrightarrow{f}N$ which is continuous. Topological $(\ring{R},\tau)$-modules and these morphisms form a category\footnote{One does not worry about size issues and in this presentation ``category'' means a locally small category while ``set'' loosely means both small and large set. One assumes that the reader is familiar with basic notions from category theory among subcategories,  (full, faithful) functors, natural transformations, natural isomorphisms, equivalence of categories, categorical product, terminal object, left/right adjoints, unit and counit of an adjunction. However some of them will be recalled in the text, essentially through footnotes. 
Of course~\cite{MacLane} is a fundamental reference for this subject.}  $\mathbf{TopMod}_{(\ring{R},\tau)}$\index{TopMod@$\mathbf{TopMod}_{(\ring{R},\tau)}$}, which is denoted $\mathbf{TopVect}_{(\mathbb{k},\tau)}$\index{TopVect@$\mathbf{TopVect}_{(\mathbb{k},\tau)}$}, when $\mathbb{k}$ is a field.

A pair $(\ring{A},\sigma)$, with $\ring{A}$ a unital $\ring{R}$-algebra, and $\sigma$ a topology on $A$, is a {\em topological $(\ring{R},\tau)$-algebra}\index{Topological algebra}, when $\sigma$ is a module topology for the underlying $\ring{R}$-module $A$, and the multiplication of $\ring{A}$ is a bilinear (jointly) continuous map. 
Given topological $(\ring{R},\tau)$-algebras $(\ring{A},\sigma),(\ring{B},\gamma)$, a {\em continuous $(\ring{R},\tau)$-algebra map} $(\ring{A},\sigma)\xrightarrow{f}(\ring{B},\gamma)$ is a unit-preserving $\ring{R}$-algebra map $\ring{A}\xrightarrow{f}\ring{B}$ which is also continuous. Topological $(\ring{R},\tau)$-algebras with these morphisms form a category ${}_1\mathbf{TopAlg}_{(\ring{R},\tau)}$\index{TopAlg@$\mathbf{TopAlg}_{(\ring{R},\tau)}$}. One also has the full subcategory\footnote{By a {\em full subcategory} is meant a subcategory $\mathbf{D}$ of $\mathbf{C}$ such that $\mathbf{D}(C,D)=\mathbf{C}(C,D)$ for each $\mathbf{D}$-objects $C,D$, where as usually, $\mathbf{C}(C,D)$ denotes the {\em hom-set} of all $\mathbf{C}$-morphisms with domain $C$ and codomain $D$.}  ${}_{1,c}\mathbf{TopAlg}_{(\ring{R},\tau)}$\index{TopAlg4@${}_{1,c}\mathbf{TopAlg}_{(\ring{R},\tau)}$} of unital and commutative topological algebras.


\begin{example}
$(R,\tau)$  is a topological $(\ring{R},\tau)$-module and the topological ring $(\ring{R},\tau)$ with the previous module structure, is a topological $(\ring{R},\tau)$-algebra.
\end{example}

\begin{remark}
A $\ring{R}$-module (resp. $\ring{R}$-algebra) with the discrete topology $\ring{d}$\index{d@$\ring{d}$} is a topological $(\ring{R},\mathsf{d})$-module (resp. $(\ring{R},\mathsf{d})$-algebra).
\end{remark}

\subsection{$X$-fold product and finitely-supported maps}\label{sec:ex:x-fold_product_and_fin_supp_maps}
The {\em opposite category} $\mathbf{C}^{\mathsf{op}}$ of $\mathbf{C}$ has the same objects and morphisms as $\mathbf{C}$ but with opposite composition. In other words one has $\mathbf{C}^{\mathsf{op}}(D,C)=\mathbf{C}(C,D)$. $f^{\mathsf{op}}$ denotes the $\mathbf{C}$-morphism $f$ considered as a $\mathbf{C}^{\mathsf{op}}$-morphism.
Let $\mathbf{C}\xrightarrow{F}\mathsf{D}$ be a functor. Then, $\mathbf{C}^{\mathsf{op}}\xrightarrow{F^{\mathsf{op}}}\mathbf{D}^{\mathsf{op}}$ with for a $\mathbf{C}$-morphism $f$, $F^{\mathsf{op}}(f^{\mathsf{op}})=F(f)^{\mathsf{op}}$, is called the {\em opposite} of $F$.  
 Each natural transformation\footnote{The notation $\alpha\colon F\Rightarrow G\colon \mathbf{C}\to\mathbf{D}$ means that $\alpha$ is a {\em natural transformation} between two functors $\mathbf{C}\xrightarrow{F,G}\mathbf{D}$. Given a $\mathbf{C}$-object $C$, $\alpha_C\in \mathbf{D}(F(C),G(C))$ denotes the {\em component at $C$} of $\alpha$. Thus $\alpha=(\alpha_C)_C$. For each functor $\mathbf{C}\xrightarrow{F}\mathbf{D}$, there is an {\em identity} at $F$, $id_F\colon F\Rightarrow F\colon \mathbf{C}\to\mathbf{D}$ with $(id_{F})_C:=id_{F(C)}$, also denoted simply $id$.} $\alpha\colon F\Rightarrow G\colon \mathbf{C}\to\mathbf{D}$ has an {\em opposite natural transformation} $\alpha^{\mathsf{op}}\colon G^{\mathsf{op}}\Rightarrow F^{\mathsf{op}}\colon \mathbf{C}^{\mathsf{op}}\to\mathbf{D}^{\mathsf{op}}$ with $(\alpha^{\mathsf{op}})_C=\alpha_C^{\mathsf{op}}$ for each $\mathbf{C}$-object $C$. 


Let $\ring{R}$ be a ring, and let $X$ be a set. The $\ring{R}$-module $R^X$ of all maps from $X$ to $R$, equivalently defined as the $X$-fold power of $R$ in the category\footnote{$\mathbf{Mod}_{\ring{R}}$\index{Mod@$\mathbf{Mod}_{\ring{R}}$} is the category of unital left-$\ring{R}$-modules with $\ring{R}$-linear maps. When $\ring{R}$ is a field  $\mathbb{k}$ one uses $\mathbf{Vect}_{\mathbb{k}}$\index{Vect@$\mathbf{Vect}_{\mathbb{k}}$} instead.} $\mathbf{Mod}_{\ring{R}}$, is the object component of a functor\footnote{$\mathbf{Set}$\index{Set@$\mathbf{Set}$} is the category of sets with set-theoretic maps.} $\mathbf{Set}^{\mathsf{op}}\xrightarrow{P_{\ring{R}}}\mathbf{Mod}_{\ring{R}}$\index{P@$P_{\ring{R}}$} whose  action on maps is as follows: given $X\xrightarrow{f}Y$ and $g\in R^{Y}$, $P_{\ring{R}}(f)(g)=g\circ f$. $R^{X}$ is merely not just a $\ring{R}$-module but, under point-wise multiplication\index{M@$M_X$} $R^{X}\times R^{X}\xrightarrow{M_X}R^{X}$, a commutative $\ring{R}$-algebra, the usual {\em function algebra}\index{Function algebra} on $X$, denoted $A_{\ring{R}}(X)$, with unit $1_{A_{\ring{R}}(X)}:=\sum_{x\in X}\delta_{x}^{\ring{R}}$, where $\delta_x^{\ring{R}}$\index{$\delta_x^{\ring{R}}$ (or $\delta_x$)}, or simply $\delta_x$, is the member of $R^X$ with $\delta_x^{\ring{R}}(x)=1_{\ring{R}}$, $x\in R$, and for $y\in X$, $y\not=x$, $\delta_x^{\ring{R}}(y)=0$. This actually provides a functor\footnote{${}_1\mathbf{Alg}_{\ring{R}}$\index{Alg@${}_1\mathbf{Alg}_{\ring{R}}$} the category of (associative) unital $\ring{R}$-algebras with unit-preserving algebra maps. (The multiplication $m_{\ring{A}}$ of an algebra $\mathsf{A}$ thus is a  $\ring{R}$-bilinear map $A\times A\xrightarrow{m_{\mathsf{A}}}A$.) ${}_{1,c}\mathbf{Alg}_{\ring{R}}$\index{Alg4@${}_{1,c}\mathbf{Alg}_{\ring{R}}$} is the category of unital and  commutative algebras.} $\mathbf{Set}^{\mathsf{op}}\xrightarrow{A_{\ring{R}}}{}_{1,c}\mathbf{Alg}_{\ring{R}}$\index{A@$A_{\ring{R}}$}.


Let $f\in R^X$. The {\em support}\index{Support} of $f$ is the set $supp(f):=\{\, x\in X\colon f(x)\not=0\,\}$\index{supp@$supp(f)$}. Let $R^{(X)}$\index{R5@$R^{(X)}$} be the sub-$\ring{R}$-module of $R^X$ consisting of all {\em finitely-supported maps} (or maps with {\em finite support})\index{Finitely-supported maps}, i.e., the maps $f$ such that $supp(f)$ is finite. 

$R^{(X)}$ is actually the free $\ring{R}$-module over $X$, and a basis is given by $\{\, \delta_x^{\ring{R}}\colon x\in X\,\}$. 
Observe that the map $X\xrightarrow{\delta^{\ring{R}}_X}R^X$\index{$\delta_X^{\ring{R}}$}, $x\mapsto \delta_x$, is one-to-one if, and only if, $\ring{R}$ is  non-trivial.

\begin{remark}\label{rem:categorytheoretic_recasting_of_free_modules}
Of course one has the {\em free module functor} $\mathbf{Set}\xrightarrow{F_{\ring{R}}}\mathbf{Mod}_{\ring{R}}$\index{F@$F_{\ring{R}}$} which is a left adjoint of  the usual forgetful functor $\mathbf{Mod}_{\ring{R}}\xrightarrow{|\cdot|}\mathbf{Set}$\index{$\mid\cdot\mid$}; $F_{\ring{R}}(X):=R^{(X)}$, and for $X\xrightarrow{f}Y$, $p\in R^{(X)}$, $F_{\ring{R}}(f)(p):=\sum_{y\in Y}\left (\sum_{x\in f^{-1}(\{\, y\,\})}p(x)\right)\delta_y^{\ring{R}}$, i.e.,  $F_{\ring{R}}(f)(\delta^{\ring{R}}_x)=\delta_{f(x)}^{\ring{R}}$, $x\in X$. The map $X\xrightarrow{\delta_X^{\ring{R}}}|R^{(X)}|$ is the component at $X$ of the unit of the adjunction\footnote{\label{footnote:dashv}In this presentation, by $F\dashv G\colon\mathbf{C}\to \mathbf{D}$\index{$F\dashv G$} is meant an adjunction with left adjoint $\mathbf{C}\xrightarrow{F}\mathbf{D}$ and right adjoint $\mathbf{D}\xrightarrow{G}\mathbf{C}$.} $F_{\ring{R}}\dashv |\cdot|\colon \mathbf{Set}\to\mathbf{Mod}_{\ring{R}}$.
%
%
\end{remark}

Let  $(\ring{R},\tau)$ be a topological ring and let $X$ be a set. Since for a map $X\xrightarrow{f}Y$, $\pi_x\circ P_{(\ring{R},\tau)}(f)=\pi_{f(x)}$, $x\in X$, $(R,\tau)^Y\xrightarrow{P_{\ring{R}}(f)}(R,\tau)^X$\index{R4@$(R,\tau)^X$} is continuous, and thus one has a {\em topological power functor}\index{Topological power functor} $\mathbf{Set}^{\mathsf{op}}\xrightarrow{P_{(\ring{R},\tau)}}\mathbf{TopMod}_{(\ring{R},\tau)}$\index{P2@$P_{(\ring{R},\tau)}$}. 

\begin{lemma}\label{lem:algebra_structure_on_fun_space_and_on_finsupp_maps}
$(R,\tau)^X\times(R,\tau)^X\xrightarrow{M_X}(R,\tau)^X$ is continuous. 
\end{lemma}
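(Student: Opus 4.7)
The plan is to exploit the universal property of the product topology on $(R,\tau)^X$: a map with codomain $(R,\tau)^X$ is continuous if and only if its composition with each projection $\pi_x$ is continuous. So it suffices to verify that $\pi_x \circ M_X$ is continuous for every $x \in X$.

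The key computation is that for $(f,g) \in R^X \times R^X$, one has
\[
(\pi_x \circ M_X)(f,g) = (fg)(x) = f(x)\, g(x) = m_{\ring{R}}(\pi_x(f),\pi_x(g)),
\]
where $m_{\ring{R}}$ denotes the multiplication of $\ring{R}$. Hence $\pi_x \circ M_X = m_{\ring{R}} \circ (\pi_x \times \pi_x)$ as maps from $(R,\tau)^X \times (R,\tau)^X$ to $(R,\tau)$.

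Now $\pi_x \times \pi_x$ is continuous because each factor $\pi_x$ is continuous (the projections from a product equipped with the product topology are continuous) and a product of continuous maps is continuous; and $m_{\ring{R}}$ is continuous because, by Definition~\ref{def:top_ring_top_field}, $\tau$ is a ring topology on $\ring{R}$. Therefore $\pi_x \circ M_X$ is continuous, and the universal property of the product topology yields the continuity of $M_X$. There is no serious obstacle here: the statement is essentially a transfer of the joint continuity of the ring multiplication to the pointwise operation on the function space via the defining property of the product topology.
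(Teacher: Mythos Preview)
Your argument is correct and in fact more direct than the paper's. Both proofs begin from the same identity $\pi_x\circ M_X = m_{\ring{R}}\circ(\pi_x\times\pi_x)$, but the paper draws from it only \emph{separate} continuity of $M_X$, then separately verifies continuity at $(0,0)$ by hand using basic product neighborhoods, and finally invokes \cite[Theorem~2.14]{Warner} (separately continuous biadditive map continuous at zero $\Rightarrow$ jointly continuous) to conclude. You instead observe that the identity already exhibits each $\pi_x\circ M_X$ as a composite of \emph{jointly} continuous maps (using that $m_{\ring{R}}$ is jointly continuous by the definition of a ring topology), and then the universal property of the product topology immediately yields joint continuity of $M_X$. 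Your route is shorter, self-contained, and avoids the external reference; the paper's route, while correct, takes an unnecessary detour through the bilinear-map criterion.
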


\begin{proof}
$M_X$ is separately continuous because for each $x\in X$, $\pi_x\circ M_{X}=m_\ring{R}\circ (\pi_x \times \pi_x)$. Let $A\subseteq X$ be finite, and for each $x\in A$, let $U_x$ be an open neighborhood zero in $(R,\tau)$.  Continuity of $m_{\ring{R}}$ at zero implies the existence of neighborhoods $V_x,W_x$ of zero such that $m_{\ring{R}}(V_x,W_x)\subseteq U_x$. $M_X(\bigcap_{x\in A}\pi_x^{-1}(V_x),\bigcap_{x\in A}\pi_{x}^{-1}(W_x))\subseteq \bigcap_{x\in A}\pi_{x}^{-1}(U_x)$ ensures continuity at zero of $M_X$, and thus continuity by~\cite[Theorem~2.14, p.~16]{Warner}.
\end{proof}

$A_{(\ring{R},\tau)}\colon X\mapsto ((R,\tau)^X,M_X,1_{A_{\ring{R}}(X)})$\index{A2@$A_{(\ring{R},\tau)}$} is a functor too and the diagram below commutes, with the forgetful functors unnamed.
\begin{equation}
\xymatrix@R=0.7pc{
{}_{1,c}\mathbf{TopAlg}_{(\ring{R},\tau)}\ar[rr]\ar[dd] && {}_{1,c}\mathbf{Alg}_{\ring{R}}\ar[dd]\\
&\ar[lu]_-{A_{(\ring{R},\tau)}}\ar[ld]^-{P_{(\ring{R},\tau)}}\mathbf{Set}^{\mathsf{op}}\ar[ru]^-{A_{\ring{R}}}\ar[rd]_-{P_{\ring{R}}}&\\
\mathbf{TopMod}_{(\ring{R},\tau)} \ar[rr]&& \mathbf{Mod}_{\ring{R}}
}
\end{equation}

\begin{notation}\label{notation:top_ring_of_functions}
The underlying topological ring of $A_{(\ring{R},\tau)}(X)$ is denoted $(\ring{R},\tau)^X$\index{R8@$(\ring{R},\tau)^X$} (of course, it is the $X$-fold product of $(\ring{R},\tau)$ in the category of rings).
\end{notation}

\section{Recollection of results about algebraic and topological duals}\label{subsec:recollection}

%

\subsection{Algebraic dual functor}\label{subsec:algebraic_dual_functor}

Let $\ring{R}$ be a ring. Let $M$ be a $\ring{R}$-module. Let $M^*:=\mathbf{Mod}_{\ring{R}}(M,R)$ be the {\em algebraic} (or {\em linear}) {\em dual} of $M$\index{Algebraic (or linear) dual}. This is readily a $\ring{R}$-module on its own. 

When $(\ring{R},\tau)$ is a topological ring, then $M^*$ may be topologized with the initial topology (\cite{BouTop,Warner}) $w_{(\ring{R},\tau)}^*$\index{w@$w_{(\ring{R},\tau)}^*$}, called the {\em weak-$*$ topology}\index{Weak-$*$ topology}, induced by the family $(M^*\xrightarrow{\Lambda_M(v)}R)_{v\in M}$\index{$\Lambda_M$} of {\em evaluations} at some points, where $(\Lambda_M(v))(\ell):=\ell(v)$. This provides a structure of topological $(\ring{R},\tau)$-module on $M^*$, which is even Hausdorff\footnote{The initial topology on $X$ induced by a family $F$ of maps all with domain $X$, is Hausdorff when $F$ separates the points of $X$, i.e., when for each $x\not=y$ in $X$, there is a map $f\in F$ such that $f(x)\not=f(y)$.} (since if $\ell(v)=0$ for all $v\in M$, then $\ell=0$). 

Moreover given a linear map $M\xrightarrow{f}N$, $N^*\xrightarrow{f^*}M^*$, $\ell\mapsto f^*(\ell):=\ell\circ f$, is continuous for the above topologies. Consequently,  this provides a functor $\mathbf{Mod}_{\ring{R}}^{\mathsf{op}}\xrightarrow{{Alg}_{(\ring{R},\tau)}}\mathbf{TopMod}_{(\ring{R},\tau)}$\index{Alg@$Alg_{(\ring{R},\tau)}$} called the {\em algebraic dual functor}\index{Algebraic dual functor}. 

\begin{remark}
$M^*$ or $f^*$ stand for $Alg_{(\ring{R},\tau)}(M)$ or $Alg_{(\ring{R},\tau)}(f)$.
\end{remark}

Up to isomorphism, one recovers the module of all $\ring{R}$-valued maps on a set $X$, with its product topology, as the algebraic dual of the module of all finitely-supported maps duly topologized as above.
\begin{lemma}\label{lem:rho_X_is_iso}
For each set $X$, $ (R,\tau)^X\simeq Alg_{(\ring{R},\tau)}(R^{(X)})$  (in $\mathbf{TopMod}_{(\ring{R},\tau)}$) under the map\index{$\rho_X$} $$\rho_X\colon (R,\tau)^X\to((R^{(X)})^*,w^*_{(\ring{R},\tau)})$$ given by $(\rho_X(f)(p)):=\sum_{x\in X}p(x)f(x)$, $f\in R^X$, $p\in R^{(X)}$. 
\end{lemma}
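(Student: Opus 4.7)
The plan is to verify that $\rho_X$ is a well-defined $\ring{R}$-linear bijection, and then to exploit the fact that both the source and target topologies are initial topologies whose defining families correspond to each other under $\rho_X$.

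First I would check well-definedness and linearity. For $f \in R^X$ and $p \in R^{(X)}$, the sum $\sum_{x\in X} p(x)f(x)$ involves only finitely many nonzero terms because $supp(p)$ is finite, so $\rho_X(f)$ is a genuine element of $R$; $\ring{R}$-linearity in $p$ follows from distributivity, and $\ring{R}$-linearity of $f \mapsto \rho_X(f)$ is equally immediate. For bijectivity, evaluating $\rho_X(f)$ at the basis element $\delta_x^{\ring{R}}$ returns $f(x)$, which makes injectivity transparent and provides the formula $f(x) := \ell(\delta_x^{\ring{R}})$ for the preimage of an arbitrary $\ell \in (R^{(X)})^*$; that this $f$ satisfies $\rho_X(f) = \ell$ follows because $\{\delta_x^{\ring{R}}\}_{x\in X}$ is a basis of $R^{(X)}$, so $p = \sum_x p(x)\delta_x^{\ring{R}}$ (finite sum) and hence $\ell(p) = \sum_x p(x)\ell(\delta_x^{\ring{R}}) = \rho_X(f)(p)$.

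Next I would handle the two continuity statements by appealing to the universal properties of the initial topologies. Since $w^*_{(\ring{R},\tau)}$ is by definition the initial topology induced by the evaluations $\Lambda_{R^{(X)}}(p)$ for $p \in R^{(X)}$, continuity of $\rho_X$ reduces to continuity of each composite $\Lambda_{R^{(X)}}(p)\circ\rho_X\colon (R,\tau)^X \to (R,\tau)$, which sends $f$ to $\sum_{x\in supp(p)} p(x)f(x)$; this is a finite $\ring{R}$-linear combination of the projections $\pi_x$, hence continuous by joint continuity of addition and scalar multiplication in $(\ring{R},\tau)$. Symmetrically, since the product topology on $(R,\tau)^X$ is initial for the family $(\pi_x)_{x\in X}$, continuity of $\rho_X^{-1}$ reduces to continuity of $\pi_x\circ \rho_X^{-1}$ for each $x$; but by the computation above, $\pi_x\circ \rho_X^{-1}(\ell) = \ell(\delta_x^{\ring{R}}) = \Lambda_{R^{(X)}}(\delta_x^{\ring{R}})(\ell)$, which is continuous by the very definition of $w^*_{(\ring{R},\tau)}$.

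There is no deep obstacle here; the only substantive point is that the two initial topologies are defined by families in perfect duality under $\rho_X$. Making this precise — the projections $\pi_x$ correspond via $\rho_X$ to the evaluations at the basis vectors $\delta_x^{\ring{R}}$, and the general evaluation $\Lambda_{R^{(X)}}(p)\circ \rho_X$ is a finite $\ring{R}$-linear combination of projections — is the crux, and it makes both continuity checks fall out symmetrically from the universal property of initial topologies rather than requiring any neighborhood-base estimates.
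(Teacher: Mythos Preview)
Your proof is correct and follows essentially the same approach as the paper's own proof: both construct the inverse via $\ell\mapsto (x\mapsto \ell(\delta_x^{\ring{R}}))$, and both verify continuity in each direction by composing with the defining maps of the respective initial topologies, observing that $\pi_x\circ\rho_X^{-1}=\Lambda_{R^{(X)}}(\delta_x^{\ring{R}})$ and that $\Lambda_{R^{(X)}}(p)\circ\rho_X$ is a finite linear combination of projections. You supply a bit more detail on well-definedness and bijectivity than the paper does, but the argument is the same.
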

\begin{proof}
Let $\ell\in (R^{(X)})^*$. Let us define $X\xrightarrow{\hat{\ell}}R$ by $\hat{\ell}(x):=\ell(\delta_x)$, $x\in X$.  That the two constructions are linear and inverse one from the other is clear. 

It remains to make sure that there are also continuous. Let $\ell\in (R^{(X)})^*$, and let $x\in X$. Then, $\pi_x(\hat{\ell})=\hat{\ell}(x)=\ell(\delta_x)=(\Lambda_{R^{(X)}}({\delta_x}))(\ell)$, which ensures  continuity of $((R^{(X)})^*,w_{(\ring{R},\tau)}^*)\xrightarrow{\rho_X^{-1}}(R,\tau)^X$. Let $f\in R^{X}$, and $p\in R^{(X)}$. As $(\Lambda_{R^{(X)}}(p))(\rho_X(f))=(\rho_X(f))(p)=\sum_{x\in X}p(x)f(x)=\sum_{x\in X}\pi_x(p)f(x)=\sum_{x\in X}\pi_x(p)\pi_x(f)$, $\Lambda_{R^{(X)}}(p)\circ \rho_X$ is a finite linear combination of projections, whence is continuous for the product topology, so is $\rho_X$. 
\end{proof}

Let $M$ be a free $\ring{R}$-module. Let $B$ be a basis of $M$. This defines a family of $\ring{R}$-linear maps, the {\em coefficient maps}\index{Coefficient maps} $(M\xrightarrow{b^*}R)_{b\in B}$\index{b@$b^*$} such that each $v\in M$ is uniquely represented as a finite linear combination $v=\sum_{b\in B}b^*(v)b$. One denotes $\mathbf{FreeMod}_{\ring{R}}$\index{FreeMod@$\mathbf{FreeMod}_{\ring{R}}$} the full subcategory of $\mathbf{Mod}_{\ring{R}}$ spanned\footnote{Each (even large) subset $X$ of the objects of some category $\mathbf{C}$ determines uniquely a full subcategory of $\mathbf{C}$ called the {\em full subcategory of $\mathbf{C}$ spanned by $X$}, namely the subcategory $\mathbf{C}_X$ whose set of objects is $X$ and $\mathbf{C}_X(C,D)=\mathbf{C}(C,D)$, $C,D\in X$.} by the free modules. When $\mathbb{k}$ is a field, $\mathbf{FreeMod}_{\mathbb{k}}$  is just $\mathbf{Vect}_{\mathbb{k}}$ itself.



\begin{example}\label{ex:p_x}
For each set $X$, $p_x=\delta_x^*$, where $p_x:=R^{(X)}\hookrightarrow R^X\xrightarrow{\pi_x}R$\index{p@$p_x$},   $x\in X$, with $R^{(X)}\hookrightarrow R^X$ the canonical inclusion.
\end{example}

\begin{remark}\label{rem:coefficient_maps}
$b^*(d)=\delta_b(d)$, $b,d\in B$. So $B\xrightarrow{(-)^*}B^*:=\{\, b^*\colon b\in B\,\}$\index{B@$B^*$} is a bijection. 
\end{remark}

Given a free $\ring{R}$-module, any choice of a basis $B$ provides the initial topology $\Pi^{\tau}_B$ on $M^*$ induced by $(\Lambda_M(b))_{b\in B}$. (Of course, $\Pi_{B}^{\tau}\subseteq w^*_{(\ring{R},\tau)}$.)

\begin{lemma}\label{lem:dual_of_free_is_top_free_mod}
Let $M$ be a free $\ring{R}$-module. The topology $\Pi^{\tau}_B$ is independent of the choice of the basis $B$ of $M$ since it is equal to $w^*_{(\ring{R},\tau)}$. Moreover, for each basis $B$ of $M$, $(M^*,w^*_{(\ring{R},\tau)})\simeq (R,\tau)^B$ (in $\mathbf{TopMod}_{(\ring{R},\tau)}$).
\end{lemma}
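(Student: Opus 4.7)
The proof splits naturally into two parts: establishing the equality of the two topologies on $M^*$, and then identifying $(M^*,w^*_{(\ring{R},\tau)})$ with $(R,\tau)^B$. My plan is to do the equality of topologies first, then derive the homeomorphism essentially for free from Lemma~\ref{lem:rho_X_is_iso}.

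For the equality $\Pi_B^\tau = w^*_{(\ring{R},\tau)}$, the inclusion $\Pi_B^\tau \subseteq w^*_{(\ring{R},\tau)}$ is immediate, as $\Pi_B^\tau$ is the initial topology induced by the subfamily $(\Lambda_M(b))_{b\in B}$ of the family $(\Lambda_M(v))_{v\in M}$ defining $w^*_{(\ring{R},\tau)}$. For the reverse inclusion, by the universal property of initial topologies, it suffices to prove that every evaluation $\Lambda_M(v)$ is continuous from $(M^*,\Pi_B^\tau)$ to $(R,\tau)$. But since $B$ is a basis, one can write $v = \sum_{b\in B} b^*(v)\, b$ as a finite linear combination, so for every $\ell\in M^*$ one has
\[
\Lambda_M(v)(\ell) \;=\; \ell(v) \;=\; \sum_{b\in B} b^*(v)\,\Lambda_M(b)(\ell),
\]
a finite linear combination, with coefficients in $R$, of maps that are $\Pi_B^\tau$-continuous by definition. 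Continuity of addition and of scalar multiplication in the topological ring $(\ring{R},\tau)$ then gives $\Pi_B^\tau$-continuity of $\Lambda_M(v)$.

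For the second part, I would use the universal property of free modules: the assignment $b\mapsto \delta_b^{\ring{R}}$ extends uniquely to an $\ring{R}$-linear isomorphism $\varphi_B\colon M\to R^{(B)}$, whose inverse sends $\delta_b\mapsto b$. Applying the (contravariant) algebraic dual functor yields a topological isomorphism $\varphi_B^*\colon \bigl((R^{(B)})^*,w^*_{(\ring{R},\tau)}\bigr)\to (M^*,w^*_{(\ring{R},\tau)})$ in $\mathbf{TopMod}_{(\ring{R},\tau)}$. Composing with $\rho_B^{-1}$ from Lemma~\ref{lem:rho_X_is_iso} gives the desired isomorphism $(R,\tau)^B\simeq (M^*,w^*_{(\ring{R},\tau)})$; concretely, it is the map $f\mapsto (b\mapsto f(b))$ viewed as an element of $M^*$ by linear extension.

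The only subtle point is making sure that continuity of finite $R$-linear combinations of $\Pi_B^\tau$-continuous maps is valid in this level of generality (no field hypothesis, arbitrary topological ring): this follows from continuity of the ring operations of $(\ring{R},\tau)$ combined with the fact that a fixed scalar $b^*(v)\in R$ yields a continuous map $R\to R$ by left multiplication, so the argument goes through unchanged. I do not expect any other real obstacle; everything else is a direct transport of structure along the basis-induced isomorphism $\varphi_B$ and an application of Lemma~\ref{lem:rho_X_is_iso}.
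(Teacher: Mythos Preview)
Your proof is correct and follows essentially the same route as the paper: the paper also shows $w^*_{(\ring{R},\tau)}\subseteq \Pi^\tau_B$ by writing $\Lambda_M(v)$ as the finite linear combination $\sum_{b\in B} b^*(v)\Lambda_M(b)$, and then declares the second assertion ``clear'' without spelling out the transport along $\varphi_B$ and the use of Lemma~\ref{lem:rho_X_is_iso} that you make explicit.
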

\begin{proof}
For $\ell\in M^*$, $(\Lambda_M(v))(\ell)=\sum_{b\in B}b^*(v)\ell(b)=\sum_{b\in B}b^*(v) (\Lambda_M({b}))(\ell)$, $v\in M$,  thus $\Lambda_M(v)$ is a finite linear combination of some $\Lambda_M({b})$'s, whence is continuous for $\Pi^{\tau}_B$, and so $w^*_{(\ring{R},\tau)}\subseteq \Pi^{\tau}_B$. The last assertion is clear.
\end{proof}

\subsection{Topological dual functor}

Let $(\ring{R},\tau)$ be a topological ring, and let $(M,\sigma)$ be a topological $(\ring{R},\tau)$-module. Let $(M,\sigma)':=\mathbf{TopMod}_{(\ring{R},\tau)}((M,\sigma),(R,\tau))$\index{$(M,\sigma)'$} be the {\em topological dual}\index{Topological dual} of $(M,\sigma)$, which is a $\ring{R}$-submodule of $M^*$. Let $(M,\sigma)\xrightarrow{f}(N,\gamma)$ be a continuous $(\ring{R},\tau)$-linear map between topological modules. Let $(N,\gamma)'\xrightarrow{f'}(M,\sigma)'$ be the $\ring{R}$-linear map given by $f'(\ell):=\ell\circ f$. All of this evidently forms a functor $\mathbf{TopMod}^{\mathsf{op}}_{(\ring{R},\tau)}\xrightarrow{Top_{(\ring{R},\tau)}}\mathbf{Mod}_{\ring{R}}$\index{Top@$Top_{(\ring{R},\tau)}$}.

Let $(\ring{R},\tau)$ be a topological ring, and let $X$ be a set. Let $R^{(X)}\xrightarrow{\lambda_X}(R^{X})^*$\index{$\lambda_X$} be given by $(\lambda_X(p))(f):=\sum_{x\in X}p(x)f(x)$, $p\in R^{(X)}$, $f\in R^{X}$. 

Let $\rho_X$ be the map from Lemma~\ref{lem:rho_X_is_iso}. Then, for each $p\in R^{(X)}$, $\lambda_X(p)=\Lambda_{R^{(X)}}(p)\circ \rho_X$, which ensures continuity of $\lambda_X(p)$, i.e., $\lambda_X(p)\in ((R,\tau)^X)'$. Next lemma follows from  the equality $p(x)=(\lambda_X(p))(\delta_x)$, $p\in R^{(X)}$, $x\in X$. 
\begin{lemma}\label{lem:lambda_is_one_to_one}
$R^{(X)}\xrightarrow{\lambda_X}((R,\tau)^{X})'$ is one-to-one. 
\end{lemma}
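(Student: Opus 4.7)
The plan is to exploit the hint already given in the text: namely the identity $p(x) = (\lambda_X(p))(\delta_x)$ for every $p \in R^{(X)}$ and every $x \in X$. This identity is a direct unpacking of the defining formula
\[
(\lambda_X(p))(f) = \sum_{x \in X} p(x) f(x),
\]
evaluated at $f = \delta_x \in R^{X}$, since only the summand indexed by $x$ survives, $\delta_x(x) = 1_{\ring{R}}$ and the sum is actually finite because $p$ has finite support.

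From this identity, injectivity of $\lambda_X$ is immediate. First I would reduce the problem to showing that the kernel of $\lambda_X$ is trivial, which suffices because $\lambda_X$ is $\ring{R}$-linear (this in turn is clear from its definition: $\lambda_X(p+q) = \lambda_X(p) + \lambda_X(q)$ and $\lambda_X(r p) = r \lambda_X(p)$ follow termwise from the sum). Then, assuming $\lambda_X(p) = 0$ in $((R,\tau)^X)'$ (equivalently in $(R^X)^*$), I would evaluate at $\delta_x$ for an arbitrary $x \in X$ to get
\[
0 = (\lambda_X(p))(\delta_x) = p(x),
\]
so $p$ vanishes pointwise on $X$ and hence $p = 0$ in $R^{(X)}$. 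This gives $\ker \lambda_X = 0$ and thus injectivity.

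There is no real obstacle here; the argument is essentially a one-line verification. The only subtlety worth highlighting is that the element $\delta_x$ must be read as a member of $R^X$ (on which $\lambda_X(p)$ acts as a linear form), even though $\delta_x$ also lives in $R^{(X)}$; this is harmless because $R^{(X)}$ is a sub-$\ring{R}$-module of $R^X$ and the formula defining $\lambda_X(p)$ makes sense for any $f \in R^X$ thanks to the finiteness of $\mathrm{supp}(p)$.
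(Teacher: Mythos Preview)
Your proof is correct and follows exactly the approach indicated in the paper, which simply notes that the lemma follows from the identity $p(x)=(\lambda_X(p))(\delta_x)$ for $p\in R^{(X)}$ and $x\in X$. You have just fleshed out that one-line observation into a full argument.
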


\section{Rigid rings: definitions and (counter-)examples}

The notion  of {\em rigidity}, recalled at the beginning of the Introduction, was originally but only implicitly introduced in~\cite[Theorem~5, p.~156]{Poinsot_Rigidity} as the main result therein and the possibility that its conclusion could remain valid for more general topological rings than topological division rings was not noticed. Since a large part of this presentation is given for arbitrary rigid rings (Definition~\ref{def:rigidity} below), one here provides a stock of basic examples. 

%
%


As~\cite[Lemma~13, p.~158]{Poinsot_Rigidity}, one has the following fundamental lemma.
\begin{lemma}\label{lem:summability_of_maps}
Let $(\ring{R},\tau)$ be a topological ring, and let $X$ be a set.   For each $f\in R^X$, $(f(x)\delta_x)_{x\in X}$ is summable in $(R,\tau)^X$ with sum $f$.
\end{lemma}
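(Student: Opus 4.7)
The plan is to unfold the definition of summability in the topological abelian group underlying $(R,\tau)^X$ and exploit the fact that the product topology is the topology of simple (pointwise) convergence. Concretely, a family $(u_x)_{x\in X}$ in $(R,\tau)^X$ has sum $f$ iff for every neighborhood $U$ of $f$ there is a finite $J_0\subseteq X$ such that $\sum_{x\in J}u_x\in U$ for every finite $J$ with $J_0\subseteq J\subseteq X$.

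First I would compute the partial sums componentwise. For a finite $J\subseteq X$ and any $y\in X$, linearity and continuity of $\pi_y$ give
\[
\pi_y\!\Bigl(\sum_{x\in J}f(x)\delta_x\Bigr)=\sum_{x\in J}f(x)\,\delta_x(y)=\begin{cases} f(y) & \text{if } y\in J,\\ 0 & \text{if } y\notin J,\end{cases}
\]
using $\delta_x(y)=1_{\ring{R}}$ when $y=x$ and $0$ otherwise. Hence for every finite $J$ containing $y$, the $y$-th coordinate of the partial sum is already exactly $f(y)$.

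Next I would test an arbitrary basic neighborhood of $f$ for the product topology. Such a neighborhood is of the form $f+V$, where $V=\bigcap_{y\in A}\pi_y^{-1}(V_y)$ for some finite $A\subseteq X$ and some neighborhoods $V_y$ of $0$ in $(R,\tau)$. Set $J_0:=A$. For every finite $J\supseteq J_0$ and every $y\in A$, the computation above gives $\pi_y\bigl(\sum_{x\in J}f(x)\delta_x-f\bigr)=f(y)-f(y)=0\in V_y$, so $\sum_{x\in J}f(x)\delta_x-f\in V$, i.e.\ $\sum_{x\in J}f(x)\delta_x\in f+V$. This establishes summability with sum $f$.

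There is no real obstacle here: the only content is the observation that the partial sums stabilize coordinatewise after indexing through a single point, which is exactly the sort of behavior the product topology is built to detect. The argument uses nothing about $\tau$ beyond the definition of a ring topology (it would work for any topological abelian group), and neither commutativity nor unitality of $\ring{R}$ plays any role beyond the existence of $1_{\ring{R}}$ in the definition of $\delta_x$.
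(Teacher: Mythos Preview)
Your proof is correct: the componentwise computation of the partial sums together with the definition of the product topology is exactly what is needed, and your verification that finite partial sums indexed by supersets of a given finite $A$ already agree with $f$ on every coordinate in $A$ is clean and complete. The paper does not supply its own proof of this lemma; it simply records the statement and cites \cite[Lemma~13, p.~158]{Poinsot_Rigidity}, so there is no alternative argument in the text to compare against, and your direct verification is the standard one.
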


\begin{definition}\label{def:rigidity}
Let $(\ring{R},\tau)$ be a topological  ring. It is said to be {\em rigid}\index{Rigid (ring or ring topology or field)} when for each set $X$, $R^{(X)}\xrightarrow{\lambda_X}((R,\tau)^{X})'$ is an isomorphism in $\mathbf{Mod}_R$, i.e., $\lambda_X$ is onto. In this situation, one sometimes also called {\em rigid} a ring topology $\tau$ such that $(\ring{R},\tau)$ is rigid. 
\end{definition}

\begin{lemma}\label{lem:image_of_lambda_X}
Let $\ell\in ((R,\tau)^{X})'$. $\ell\in im(\lambda_X)$ if, and only if, $\hat{\ell}\colon X\to R$\index{$\hat{\ell}$} given by $\hat{\ell}(x):=\ell(\delta_x)$, belongs to $R^{(X)}$. Moreover, $im(\lambda_X)\xrightarrow{\hat{(-)}}R^{(X)}$, $\ell\mapsto \hat{\ell}=\sum_{x\in X}\ell(\delta_x)\delta_x$, is the inverse of $\lambda_X$.
\end{lemma}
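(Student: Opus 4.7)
The plan is to verify both implications directly, then deduce the inverse statement using Lemma~\ref{lem:lambda_is_one_to_one}. The two ingredients are the identity $\lambda_X(p)(\delta_x)=p(x)$ (already exploited in Lemma~\ref{lem:lambda_is_one_to_one}) and the summability result of Lemma~\ref{lem:summability_of_maps}.

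For the forward direction, if $\ell=\lambda_X(p)$ with $p\in R^{(X)}$, I compute directly $\hat{\ell}(x)=\ell(\delta_x)=\sum_{y\in X}p(y)\delta_x(y)=p(x)$, so $\hat{\ell}=p\in R^{(X)}$. This simultaneously yields $\hat{(-)}\circ\lambda_X=\mathrm{id}_{R^{(X)}}$.

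For the converse, assume $\hat{\ell}\in R^{(X)}$. By Lemma~\ref{lem:summability_of_maps}, any $f\in R^X$ is the sum of the summable family $(f(x)\delta_x)_{x\in X}$ in $(R,\tau)^X$. Since $\ell\colon(R,\tau)^X\to(R,\tau)$ is continuous and $\ring{R}$-linear, it commutes with this summable sum: the net of finite partial sums $\sum_{x\in F}f(x)\delta_x$ (indexed by finite $F\subseteq X$) converges to $f$, so its image $\sum_{x\in F}f(x)\ell(\delta_x)$ converges to $\ell(f)$ in $(R,\tau)$, which means $(f(x)\hat{\ell}(x))_{x\in X}$ is summable with sum $\ell(f)$. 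As $\hat{\ell}$ is finitely supported, only finitely many terms are nonzero and $\ell(f)=\sum_{x\in X}f(x)\hat{\ell}(x)=(\lambda_X(\hat{\ell}))(f)$. Thus $\ell=\lambda_X(\hat{\ell})$, giving both $\ell\in\mathrm{im}(\lambda_X)$ and $\lambda_X\circ\hat{(-)}=\mathrm{id}_{\mathrm{im}(\lambda_X)}$. Combined with the forward direction (or alternatively with injectivity from Lemma~\ref{lem:lambda_is_one_to_one}), this establishes the inverse statement, and the explicit formula $\hat{\ell}=\sum_{x\in X}\ell(\delta_x)\delta_x$ is just $\hat{\ell}$ written in the canonical basis of $R^{(X)}$.

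The principal technical point is the interchange of $\ell$ with the summable sum, i.e.\ the fact that continuous $\ring{R}$-linear maps between topological modules preserve summable families. This is routine but is the only non-algebraic ingredient in the argument; everything else reduces to the evaluation $\ell(\delta_x)=\hat{\ell}(x)$.
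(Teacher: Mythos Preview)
Your proof is correct. The paper states this lemma without proof, presumably regarding it as routine; your argument is exactly the one the surrounding material sets up, invoking Lemma~\ref{lem:summability_of_maps} for the summability $f=\sum_x f(x)\delta_x$ and then pushing $\ell$ through by continuity and linearity, together with the evaluation $\lambda_X(p)(\delta_x)=p(x)$ already used for Lemma~\ref{lem:lambda_is_one_to_one}.
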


\subsection{Basic stock of examples}


%
Of course, the trivial ring is rigid (under the (in)discrete topology!). 

The first assertion of the following result is a slight generalization of the main theorem in~\cite{Poinsot_Rigidity}, which is precisely the second assertion below, since the proof of~\cite[Theorem~5, p.~156]{Poinsot_Rigidity} does not use continuity of the inversion.
\begin{lemma}\label{lem:rigidity_of_field_with_ring_topology}
Let $(\mathbb{k},\tau)$ be a field  with a ring topology (see Definition~\ref{def:top_ring_top_field}).  Then, $(\mathbb{k},\tau)$ is rigid. In particular, any topological field\footnote{A {\em topological field} is a field with a ring topology $(\mathbb{k},\tau)$ such that the inversion $\alpha\mapsto\alpha^{-1}$ is continuous from $\mathbb{k}\setminus\{\, 0\,\}$ to itself with the subspace topology.} is rigid.
\end{lemma}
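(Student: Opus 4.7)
The plan is to reduce the claim to showing that every continuous linear functional $\ell$ on $(\mathbb{k},\tau)^X$ has finitely supported ``coefficient sequence'' $\hat{\ell}(x) = \ell(\delta_x)$. Indeed, Lemma~\ref{lem:lambda_is_one_to_one} already gives injectivity of $\lambda_X$, and Lemma~\ref{lem:image_of_lambda_X} identifies the image of $\lambda_X$ as exactly those $\ell$ with $\hat{\ell}\in\mathbb{k}^{(X)}$; so surjectivity is equivalent to every $\ell\in((\mathbb{k},\tau)^X)'$ having $\hat{\ell}$ of finite support. The ``in particular'' clause is then automatic, since a topological field is a special case of a field with a ring topology.

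To prove finite support, I would first exploit the Hausdorff assumption to produce a \emph{proper} open neighborhood of $0$ in $\mathbb{k}$: assuming $\mathbb{k}$ is non-trivial (otherwise there is nothing to show), pick any $a\neq 0$; as $\tau$ is $T_1$, $V:=\mathbb{k}\setminus\{a\}$ is an open neighborhood of $0$ with $V\neq\mathbb{k}$. By continuity of $\ell$ at $0$ and the definition of the product topology, there exist a finite subset $F\subseteq X$ and open neighborhoods $W_x\ni 0$ in $\mathbb{k}$ for $x\in F$ such that the basic neighborhood $U=\bigcap_{x\in F}\pi_x^{-1}(W_x)$ satisfies $\ell(U)\subseteq V$.

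The key step is then the following observation: for every $y\in X\setminus F$ and every $\alpha\in\mathbb{k}$, the element $\alpha\delta_y$ lies in $U$, because $\pi_x(\alpha\delta_y)=0\in W_x$ for each $x\in F$ (using $y\notin F$). Linearity of $\ell$ yields $\alpha\,\ell(\delta_y)=\ell(\alpha\delta_y)\in V$, so $\mathbb{k}\cdot\ell(\delta_y)\subseteq V$. Here is where the field hypothesis is used essentially: if $\ell(\delta_y)$ were non-zero then $\mathbb{k}\cdot\ell(\delta_y)=\mathbb{k}$, contradicting $V\subsetneq\mathbb{k}$. Therefore $\ell(\delta_y)=0$ for every $y\in X\setminus F$, which means $\hat{\ell}$ is supported on the finite set $F$, and Lemma~\ref{lem:image_of_lambda_X} delivers $\ell\in\mathrm{im}(\lambda_X)$.

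There is no real obstacle here; the proof is a short topological argument once the right basic neighborhood is chosen. The only mildly delicate point is ensuring the existence of a proper open neighborhood of $0$, which is exactly what the Hausdorff convention buys us, and which combines with the ``no proper ideals'' property of a field to force $\ell(\delta_y)=0$ off a finite set. Note that continuity of inversion is never invoked, which is why the result extends verbatim from topological fields to fields with a mere ring topology.
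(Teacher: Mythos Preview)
Your argument is correct. The reduction via Lemmas~\ref{lem:lambda_is_one_to_one} and~\ref{lem:image_of_lambda_X} is exactly right, and the core step---using a basic product-neighborhood $U=\bigcap_{x\in F}\pi_x^{-1}(W_x)$ together with the observation that $\alpha\delta_y\in U$ for all $\alpha$ whenever $y\notin F$, then concluding $\mathbb{k}\cdot\ell(\delta_y)\subseteq V\subsetneq\mathbb{k}$---is clean and uses nothing beyond the field axiom and the Hausdorff convention.

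The paper itself does not supply a self-contained proof of this lemma: it cites \cite[Theorem~5]{Poinsot_Rigidity} and simply remarks that the argument there never invokes continuity of inversion; it later observes (after Proposition~\ref{prop:the_result_about_com_von_Neumann_regular_rings}) that the lemma is also a special case of the von Neumann regular result, since $E(\mathbb{k})=\{0,1_{\mathbb{k}}\}$. That alternative route proceeds via summability of $(\ell(\delta_x)^{\dagger}\ell(\delta_x))_x$ and the Cauchy condition, forcing the idempotents $\ell(\delta_x)^{\dagger}\ell(\delta_x)$ to fall into a neighborhood of $0$ avoiding $1$, hence to vanish. Your proof is more elementary in that it avoids summability and Lemma~\ref{lem:summability_of_maps} altogether, working directly with one basic open set and the absence of proper nonzero ideals in a field; the von Neumann approach, by contrast, buys generality (it covers rings whose nonzero idempotents stay away from $0$). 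Both make the same essential use of Hausdorffness to obtain a proper neighborhood of $0$.
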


\begin{lemma}\label{lem:rigidity_of_discrete_rings}
For each ring $\ring{R}$, the discretely topologized ring $(\ring{R},\mathsf{d})$ is rigid.
\end{lemma}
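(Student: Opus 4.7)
The plan is to show surjectivity of $\lambda_X$ for any set $X$ (injectivity being already guaranteed by Lemma~\ref{lem:lambda_is_one_to_one}). By Lemma~\ref{lem:image_of_lambda_X}, this reduces to proving that for every continuous $\ring{R}$-linear map $\ell\in((R,\mathsf{d})^X)'$, the map $\hat{\ell}\colon X\to R$ defined by $\hat{\ell}(x):=\ell(\delta_x)$ has finite support.

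The key observation is that, because the topology on $R$ is discrete, the singleton $\{0\}$ is open in $(R,\mathsf{d})$. Hence continuity of $\ell$ implies that $\ell^{-1}(\{0\})$ is an open neighborhood of $0$ in $(R,\mathsf{d})^X$. By definition of the product topology, this neighborhood contains a basic open set of the form $\bigcap_{x\in A}\pi_x^{-1}(U_x)$ for some finite subset $A\subseteq X$ and open neighborhoods $U_x$ of $0$ in $(R,\mathsf{d})$; since one may shrink each $U_x$ to $\{0\}$, one may in fact take the basic neighborhood to be $V_A:=\bigcap_{x\in A}\pi_x^{-1}(\{0\})$.

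Now for any $x\in X\setminus A$ and any $y\in A$, one has $y\neq x$, so $\delta_x(y)=0$, i.e., $\pi_y(\delta_x)=0$. Hence $\delta_x\in V_A\subseteq \ell^{-1}(\{0\})$, which yields $\hat{\ell}(x)=\ell(\delta_x)=0$. Therefore $\mathrm{supp}(\hat{\ell})\subseteq A$ is finite, as required.

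I do not anticipate any genuine obstacle: the whole argument rests on the single fact that $\{0\}$ is open in the discrete topology, which forces the usual basic neighborhoods in the product to cut out cofinitely many coordinates. Once this is noted, applying Lemma~\ref{lem:image_of_lambda_X} finishes the proof immediately.
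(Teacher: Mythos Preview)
Your proof is correct, and it takes a somewhat more direct route than the paper's. The paper argues via summability: from Lemma~\ref{lem:summability_of_maps} the family $(\ell(\delta_x))_{x\in X}$ is summable in $(R,\mathsf{d})$ with sum $\ell(1_{A_{\ring{R}}(X)})$, and then invokes the general fact (Warner) that in any Hausdorff topological abelian group a summable family has all but finitely many terms in any given neighborhood of zero---here $\{0\}$ itself. Your argument bypasses summability entirely and works straight from the definition of the product topology: continuity of $\ell$ at $0$ already forces $\ell$ to vanish on a basic neighborhood $V_A$, and every $\delta_x$ with $x\notin A$ lies in $V_A$. This is more elementary and self-contained for the discrete case; the paper's summability framework, on the other hand, is what it reuses uniformly in the subsequent proofs for normed algebras (Lemma~\ref{lem:rigidity_of_normed_algebra}) and von Neumann regular rings (Proposition~\ref{prop:the_result_about_com_von_Neumann_regular_rings}), where no single open set isolates zero and one genuinely needs the Cauchy condition for summable families.
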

\begin{proof}
Let $\ell\in ((R,\mathsf{d})^X)'$. As a consequence of Lemma~\ref{lem:summability_of_maps},  $(\ell(\delta_x))_x $ is summable in $(R,\mathsf{d})$, with sum $\ell(1_{A_{\ring{R}}(X)})$. Since $\{\, 0\,\}$ is an open neighborhood of zero in $(R,\mathsf{d})$, $\ell(\delta_x)=0$ for all but finitely many $x\in X$ (\cite[Theorem~10.5, p.~73]{Warner}). The conclusion follows by Lemma~\ref{lem:image_of_lambda_X}. 
\end{proof}

Every normed, complex or real,  commutative and unital algebra (e.g., Banach or  $C^*$-algebra) is rigid. 
\begin{lemma}\label{lem:rigidity_of_normed_algebra}
Let $\mathbb{k}=\mathbb{R},\mathbb{C}$. 
Let $(\mathsf{A},\|\cdot\|)$ be a commutative normed $\mathbb{k}$-algebra\footnote{In a normed algebra $(\ring{A},\|\cdot\|)$, unital or not, commutative or not,  the norm is assumed {\em sub-multiplicative}, i.e., $\|xy\|\leq \|x\|\|y\|$, which ensures that the multiplication of $\ring{A}$ is jointly continuous with respect to the topology induced by the norm.} with a unit. Then, as a topological ring under the topology induced by the norm, it is rigid.
\end{lemma}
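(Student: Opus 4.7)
The plan is to verify the surjectivity criterion of Lemma~\ref{lem:image_of_lambda_X}: for each continuous $\ring{A}$-linear functional $\ell\in((A,\tau)^X)'$ (with $\tau$ the topology induced by $\|\cdot\|$), I must show that the set $Y_\ell:=\{x\in X\colon \ell(\delta_x)\neq 0\}$ is finite. The argument is by contradiction: assuming $Y_\ell$ is infinite, I extract a sequence of distinct points $(y_n)_{n\geq 1}\subseteq Y_\ell$ and exploit the normed structure of $\ring{A}$ to manufacture an element of $A^X$ whose existence contradicts continuity of $\ell$.

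Concretely, set $c_n:=\ell(\delta_{y_n})\in A$; these are nonzero, hence $\|c_n\|>0$. Using the unital $\mathbb{k}$-algebra embedding $\mathbb{k}\hookrightarrow A$, $t\mapsto t\cdot 1_{\ring{A}}$, define $h\in A^X$ by $h(y_n):=\|c_n\|^{-1}\cdot 1_{\ring{A}}$ and $h(x):=0$ otherwise. By Lemma~\ref{lem:summability_of_maps}, the family $(h(x)\delta_x)_{x\in X}$ is summable in $(A,\tau)^X$ with sum $h$. Since $\ell$ is continuous and $\ring{A}$-linear, summability is transported: the family $(\ell(h(x)\delta_x))_{x\in X}=(h(x)\cdot\ell(\delta_x))_{x\in X}$ is summable in the normed space $(A,\tau)$ with sum $\ell(h)$. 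Its nonzero entries are precisely $h(y_n)c_n=c_n/\|c_n\|$, each of norm $1$ in $(A,\|\cdot\|)$.

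This contradicts the elementary fact that a summable family $(a_\alpha)_\alpha$ in a Hausdorff topological abelian group satisfies $\{\alpha\colon a_\alpha\notin U\}$ finite for every neighborhood $U$ of $0$; applied to $U=\{a\in A\colon\|a\|<1\}$, this forbids the persistence of infinitely many terms of norm $1$. Hence $Y_\ell$ must be finite, and Lemma~\ref{lem:image_of_lambda_X} then gives $\ell\in\mathrm{im}(\lambda_X)$, so $\lambda_X$ is surjective and $(\ring{A},\|\cdot\|)$ is rigid.

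The subtle step — and the only place where the hypothesis $\mathbb{k}\in\{\mathbb{R},\mathbb{C}\}$ together with the normed structure is genuinely used — is the rescaling: one needs the positive real $\|c_n\|^{-1}$ to live in the ground scalars so that, via $\ring{A}$-linearity of $\ell$, the values $c_n$ can be \emph{uniformly} normalized inside $A$ to a single positive norm. Over a general rigid topological ring no such scaling is available, which is precisely why this case is treated separately from the earlier discrete and topological-field examples. The remaining ingredients, namely preservation of summability by continuous additive maps and the ``only finitely many large terms'' feature of summable families, are standard.
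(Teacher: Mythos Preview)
Your proof is correct and follows essentially the same approach as the paper's: define a rescaling function $h$ (the paper's $f$) by $h(x)=\|\ell(\delta_x)\|^{-1}\,1_{\ring{A}}$ on the support of $\hat{\ell}$, push summability of $(h(x)\delta_x)_x$ through $\ell$, and observe that the resulting summable family has infinitely many terms of norm exactly $1$, contradicting the standard ``finitely many terms outside any neighborhood of $0$'' property. The only cosmetic difference is that you phrase it by contradiction and pass to a countable sequence $(y_n)$, whereas the paper works directly with the entire support $supp(\hat{\ell})$; neither detour is necessary, but neither is wrong.
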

\begin{proof}
Let $\tau_{\|-\|}$ be the topology on $A$ induced by the norm of $\ring{A}$, where $A$ is the underlying $\mathbb{k}$-vector space of $\ring{A}$. Let $X$ be a set. Let $\ell \in ((A,\tau_{\|-\|})^X)'$. Let $f\in A^X$ be given by $f(x)=\frac{1}{\|\ell(\delta_x)\|}1_{\ring{A}}$ if $x\in supp(\hat{\ell})$ and $f(x)=0$ for $x\not\in supp(\hat{\ell})$.  Since by Lemma~\ref{lem:summability_of_maps}, $(f(x)\delta_x)_{x\in X}$ is summable with sum $f$,  $(f(x)\ell(\delta_x))_{x\in X}$ is summable in $(A,\tau_{\|-\|})$ with sum $\ell(f)$. So according to~\cite[Theorem~10.5, p.~73]{Warner},  for  $1>\epsilon>0$, there exists a finite set $F_{\epsilon}\subseteq X$ such that $\|f(x)\ell(\delta_x)\|<\epsilon$ for all $x\in X\setminus F_{\epsilon}$.  But  $1=\|f(x)\ell(\delta_x)\|$ for all $x\in supp(\hat{\ell})$ so that $supp(\hat{\ell})$ is finite, and $\lambda_X$ is onto by Lemma~\ref{lem:image_of_lambda_X}. 
\end{proof}

\subsection{A supplementary example: von Neumann regular rings}


A ring\footnote{Assumed commutative and unital  as in Section~\ref{conv:convention1}.}  is said to be  {\em von Neumann regular}\index{Von Neumann regular ring} if for each $x\in R$, there exists $y\in R$ such that $x=xyx$~\cite[Theorem~4.23, p.~65]{Lam}. 

Let us assume that $\ring{R}$ is a (commutative) von Neumann regular ring. For each $x\in R$, there is a unique $x^{\dagger}\in R$\index{$x^{\dagger}$}, called the {\em weak inverse} of $x$, such that $x=xx^{\dagger}x$ and $x^{\dagger}=x^{\dagger}xx^{\dagger}$.\footnote{Given $y\in R$ with $x=xyx$, then $z:=yxy$ meets the requirements to be a ``weak inverse'' of $x$, and if $y,z$ are two candidates, then one has  $z=z^2x=z^2x^2y=(x^2z)zy=xzy=(x^2y)zy=(x^2z)y^2=xy^2=y$.} 

\begin{example}\label{ex:prod_fields}
A field is a von Neumann regular with $x^{\dagger}:=x^{-1}$, $x\not=0$, and $0^{\dagger}=0$. More generally, let $(\mathbb{k}_i)_{i\in I}$ be a  family of fields. Let $\ring{R}$ be a ring, and let $\jmath\colon \ring{R}\hookrightarrow\prod_{i\in I}\mathbb{k}_i$ be a one-to-one ring map. Assume that for each $x\in R$, $\jmath(x)^{\dagger}\in im(\jmath)$, where for $(x_i)_{i\in I}\in \prod_{i\in I}\mathbb{k}_i$,  $(x_i)_{i\in I}^{\dagger}:=(x_i^{\dagger})_{i\in I}$. Then, $\ring{R}$ is  von Neumann regular.
\end{example}

\begin{remark}\label{rem:xxstar_is_idempotent_and_nonzero_for_x_nonzero}
Let $\ring{R}$ be a von Neumann regular ring. For each $x\in R$, $x\not=0$ if, and only if, $xx^{\dagger}\not=0$. Moreover, $xx^{\dagger}$ belongs to the set $E(\ring{R})$\index{E@$E(\ring{R})$} of all idempotents ($e^2=e$) of $\ring{R}$.
\end{remark}

\begin{proposition}\label{prop:the_result_about_com_von_Neumann_regular_rings}
Let $(\ring{R},\tau)$ be a topological ring such that $\ring{R}$ is von Neumann regular. If $0\not\in \overline{E(\ring{R})\setminus\{\, 0\,\}}$, then $(\ring{R},\tau)$ is rigid. In particular, if $E(\ring{R})$ is finite, then $(\ring{R},\tau)$ is rigid.
\end{proposition}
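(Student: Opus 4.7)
The plan is to follow the template of Lemma~\ref{lem:rigidity_of_normed_algebra}, replacing the normalization $\frac{1}{\|\ell(\delta_x)\|}1_{\ring{A}}$ by the weak inverse $\ell(\delta_x)^{\dagger}$, and exploiting the idempotent produced by $xx^{\dagger}$ via the topological hypothesis on $E(\ring{R})\setminus\{\,0\,\}$. So fix $\ell\in ((R,\tau)^X)'$. By Lemma~\ref{lem:image_of_lambda_X} it suffices to prove that $\hat{\ell}(x)=\ell(\delta_x)$ vanishes for all but finitely many $x\in X$.

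First I would define $f\in R^X$ by $f(x):=\ell(\delta_x)^{\dagger}$ if $\ell(\delta_x)\not=0$ and $f(x):=0$ otherwise. By Lemma~\ref{lem:summability_of_maps}, the family $(f(x)\delta_x)_{x\in X}$ is summable in $(R,\tau)^X$ with sum $f$, and since $\ell$ is continuous and $\ring{R}$-linear, the family $(\ell(f(x)\delta_x))_{x\in X}=(f(x)\ell(\delta_x))_{x\in X}$ is summable in $(R,\tau)$. Observe that, by construction and commutativity, $f(x)\ell(\delta_x)=\ell(\delta_x)\ell(\delta_x)^{\dagger}$ whenever $\ell(\delta_x)\not=0$, and this lies in $E(\ring{R})\setminus\{\,0\,\}$ by Remark~\ref{rem:xxstar_is_idempotent_and_nonzero_for_x_nonzero}; for the remaining $x$'s, $f(x)\ell(\delta_x)=0$.

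Now I invoke the hypothesis: since $0\not\in\overline{E(\ring{R})\setminus\{\,0\,\}}$, there is an open neighborhood $U$ of $0$ in $(R,\tau)$ disjoint from $E(\ring{R})\setminus\{\,0\,\}$. Summability together with \cite[Theorem~10.5, p.~73]{Warner} yields a finite set $F_U\subseteq X$ such that $f(x)\ell(\delta_x)\in U$ for every $x\in X\setminus F_U$. For such an $x$, the product $f(x)\ell(\delta_x)$ lies in $\{\,0\,\}\cup (E(\ring{R})\setminus\{\,0\,\})$ by the previous paragraph, but meets $U$, forcing $f(x)\ell(\delta_x)=0$; since $\ell(\delta_x)\not=0$ would give $f(x)\ell(\delta_x)=\ell(\delta_x)\ell(\delta_x)^{\dagger}\not=0$, we conclude $\ell(\delta_x)=0$. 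Hence $\hat{\ell}$ is supported in $F_U$ and belongs to $R^{(X)}$, so $\lambda_X$ is onto by Lemma~\ref{lem:image_of_lambda_X}.

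For the ``in particular'' clause, if $E(\ring{R})$ is finite, then so is $E(\ring{R})\setminus\{\,0\,\}$; being a finite subset of the Hausdorff space $(R,\tau)$ it is closed, and it trivially does not contain $0$, so $0\not\in\overline{E(\ring{R})\setminus\{\,0\,\}}$ and the previous argument applies. The main (mild) obstacle I foresee is simply choosing the correct multiplier $f(x)$: one needs a single element that turns $\ell(\delta_x)$ into a \emph{nonzero idempotent}, which is exactly what the weak inverse produces, and it is this idempotency that makes the hypothesis on $E(\ring{R})\setminus\{\,0\,\}$ usable in the summability argument.
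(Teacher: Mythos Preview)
Your proof is correct and follows essentially the same approach as the paper: define $f(x)=\ell(\delta_x)^{\dagger}$, use summability of $(f(x)\ell(\delta_x))_x$ to force all but finitely many of these idempotents into a neighborhood of $0$ avoiding $E(\ring{R})\setminus\{0\}$, and conclude via Remark~\ref{rem:xxstar_is_idempotent_and_nonzero_for_x_nonzero}. The only cosmetic difference is your case split in defining $f$ (unnecessary since $0^{\dagger}=0$) and your more explicit treatment of the ``in particular'' clause via Hausdorffness.
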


\begin{proof}
That the second assertion follows from the first is immediate. Let $X$ be a  set. Let us assume that $0\not\in \overline{E(\ring{R})\setminus\{\, 0\,\}}$. Let $V\in \mathfrak{V}_{(R,\tau)}(0)$\footnote{Given a topological space $(E,\tau)$ and $x\in E$, $\mathfrak{V}_{(E,\tau)}(x)$\index{V@$\mathfrak{V}_{(E,\tau)}(x)$} is the set of all neighborhoods of $x$.}  such that $V\cap (E(R)\setminus\{\, 0\,\})=\emptyset$.  Let $\ell\in ((R,\tau)^X)'$. Let $f\in R^X$ be given by $f(x):=\ell(\delta_x)^{\dagger}$ for each $x\in X$.  Since $(f(x)\ell(\delta_x))_{x\in X}$ is summable in $(R,\tau)$ with sum $\ell(f)$, by Cauchy's condition~\cite[Definition~10.3, p.~72]{Warner}, there exists a finite set $A_{f,V}\subseteq X$ such that for all $x\not\in A_{f,V}$, $f(x)\ell(\delta_x)\in V$. But for $x\in X$, $f(x)\ell(\delta_x)=\ell(\delta_x)^{\dagger}\ell(\delta_x)\in E(\ring{R})$. Whence, in view of Remark~\ref{rem:xxstar_is_idempotent_and_nonzero_for_x_nonzero},  for all but finitely many $x$'s, $f(x)\ell(\delta_x)=0$, i.e., $\ell(\delta_x)=0$.
%
\end{proof}

\begin{remark}
Lemma~\ref{lem:rigidity_of_field_with_ring_topology} is a consequence of Proposition~\ref{prop:the_result_about_com_von_Neumann_regular_rings} since for a field $\mathbb{k}$, $E(\mathbb{k})=\{\, 0,1_{\mathbb{k}}\,\}$.
\end{remark}

%
%
%

Now, let $(E_i,\tau_i)_{i\in I}$ be a family of topological spaces. On $\prod_{i\in I}E_i$ is defined the {\em box topology}~\cite[p.~107]{Kelley}  a basis of open sets of which is given by the ``box'' $\prod_{i\in I}V_i$\index{Box topology}, where each $V_i\in\tau_i$, $i\in I$.  The product $\prod_{i\in I}E_i$ together with the box topology is denoted by $\bigsqcap_{i\in I}(E_i,\tau_i)$\index{$\bigsqcap_{i\in I}(E_i,\tau_i)$}. (This topology is Hausdorff as soon as all the $(E_i,\tau_i)$'s are.)

It is not difficult to see that given a family $(\ring{R}_i,\tau_i)_{i\in I}$ of topological rings, then $\bigsqcap_{i\in I}(\ring{R}_i,\tau_i)$ still is a topological ring (under component-wise operations). 

\begin{proposition}
Let $(\mathbb{k}_i)_{i\in I}$ be a family of fields, and for each $i\in I$, let $\tau_i$ be a ring topology on $\mathbb{k}_i$. Let $\ring{R}$ be a ring with a one-to-one ring map $\jmath\colon \ring{R}\hookrightarrow\prod_{i\in I}\mathbb{k}_i$. Let us assume that for each $x\in R$, $\jmath(x)^{\dagger}\in im(\jmath)$ ($(x_i)_i^{\dagger}$ as in Example~\ref{ex:prod_fields}). Let $\ring{R}$ be topologized with the subspace topology $\tau_\jmath$ inherited from $\bigsqcap_{i\in I}(\mathbb{k}_i,\tau_i)$. Then, $(\ring{R},\tau_\jmath)$ is rigid.
\end{proposition}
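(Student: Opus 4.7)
The plan is to reduce the claim to Proposition~\ref{prop:the_result_about_com_von_Neumann_regular_rings} by exhibiting a neighborhood of zero in $\tau_\jmath$ which avoids every nonzero idempotent of $\ring{R}$. This uses critically that (i)~Example~\ref{ex:prod_fields} already gives us that $\ring{R}$ is von Neumann regular under the hypothesis on $\jmath(x)^\dagger$; (ii)~the idempotents of a product of fields are easy to describe; and (iii)~the box topology has enough open sets to separate any such nonzero idempotent from $0$.

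First, I would record that $(\bigsqcap_{i\in I}(\mathbb{k}_i,\tau_i))$ is a topological ring by the remark preceding the proposition, so its subspace topology $\tau_\jmath$ on the subring $\jmath(\ring{R})$ (transported back via the injection $\jmath$) makes $(\ring{R},\tau_\jmath)$ a topological ring. By Example~\ref{ex:prod_fields}, the hypothesis that $\jmath(x)^{\dagger}\in\mathrm{im}(\jmath)$ for all $x\in R$ ensures $\ring{R}$ is von Neumann regular, so Proposition~\ref{prop:the_result_about_com_von_Neumann_regular_rings} is available.

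The heart of the argument is the choice of neighborhood. In a field $\mathbb{k}_i$, we have $E(\mathbb{k}_i)=\{0,1_{\mathbb{k}_i}\}$, so idempotents of $\prod_{i\in I}\mathbb{k}_i$ are exactly tuples $(e_i)_{i\in I}$ with each $e_i\in\{0,1_{\mathbb{k}_i}\}$. Since each $(\mathbb{k}_i,\tau_i)$ is Hausdorff, for every $i\in I$ there is an open neighborhood $V_i\in\mathfrak{V}_{(\mathbb{k}_i,\tau_i)}(0)$ with $1_{\mathbb{k}_i}\notin V_i$. The product $V:=\prod_{i\in I}V_i$ is then an open neighborhood of $0$ in $\bigsqcap_{i\in I}(\mathbb{k}_i,\tau_i)$; its intersection $U:=\jmath^{-1}(V)$ is a neighborhood of $0$ in $(\ring{R},\tau_\jmath)$. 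If $e\in E(\ring{R})\cap U$, then $\jmath(e)$ is an idempotent of $\prod_{i\in I}\mathbb{k}_i$ lying in $V$; writing $\jmath(e)=(e_i)_{i\in I}$, each $e_i\in V_i\cap\{0,1_{\mathbb{k}_i}\}=\{0\}$, so $\jmath(e)=0$, and injectivity of $\jmath$ forces $e=0$. Hence $U\cap(E(\ring{R})\setminus\{0\})=\emptyset$, which exactly says $0\notin\overline{E(\ring{R})\setminus\{0\}}$ in $(\ring{R},\tau_\jmath)$.

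Proposition~\ref{prop:the_result_about_com_von_Neumann_regular_rings} then immediately yields rigidity of $(\ring{R},\tau_\jmath)$. The only subtle point—and the one place I would be careful—is that separation of $0$ and $1_{\mathbb{k}_i}$ must be carried out \emph{componentwise and simultaneously} across the (possibly very large) index set $I$, which is precisely why one must topologize the product with the box topology rather than the product topology: in the product topology a basic neighborhood of $0$ is forced to be $\mathbb{k}_i$ on cofinitely many coordinates, so every nonzero idempotent with cofinitely many $1_{\mathbb{k}_i}$-coordinates would sit in any such neighborhood and the argument would collapse.
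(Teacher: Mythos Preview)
Your proof is correct and follows essentially the same route as the paper: both reduce to Proposition~\ref{prop:the_result_about_com_von_Neumann_regular_rings} by using the Hausdorff property in each factor to pick $V_i\ni 0$ with $1_{\mathbb{k}_i}\notin V_i$, forming the box $\prod_i V_i$, and observing that the only idempotent it can contain is $0$. Your presentation is in fact slightly more direct---you pull the box back through $\jmath$ and argue immediately in $\ring{R}$, whereas the paper first shows $0\notin\overline{E(\prod_i\mathbb{k}_i)\setminus\{0\}}$ and then transfers this to $\ring{R}$ via a closure-containment argument---but the substance is identical.
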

\begin{proof}
Naturally $(x_i)_{i\in I}\in E(\prod_{i\in I}\mathbb{k}_i)$ if, and only if, $x_i\in \{\, 0,1_{\mathbb{k}_i}\,\}$ for each $i\in I$. Now, for each $i\in I$, let $U_i$ be an open neighborhood of zero in $(\mathbb{k}_i,\tau_i)$ such that $1_{\mathbb{k}_i}\not\in U_i$. Then, $\prod_i U_i$ is an open neighborhood of zero in $\bigsqcap_{i\in I}(\mathbb{k}_i,\tau_i)$ whose only idempotent member is $0$. Therefore, $0\not\in \overline{E(\prod_{i\in I}\mathbb{k}_i)\setminus\{\, 0\,\}}$.

Under the assumptions of the statement, an application of Example~\ref{ex:prod_fields} states that $\ring{R}$ is a (commutative) von Neumann regular ring. It is also of course a topological ring under $\tau_\jmath$ (since $\jmath$ is a one-to-one ring map). It is also clear that $E(R)\simeq E(\jmath(R))\subseteq E(\prod_i\mathbb{k}_i)$. Furthermore, $\jmath(\overline{E(R)\setminus\{\, 0\,\}})= \overline{E(\jmath(R))\setminus\{\, 0\,\}}\cap \jmath(R)\subseteq\overline{E(\prod_i \mathbb{k}_i)\setminus\{\, 0\,\}}$, and thus $0\not\in \overline{E(R)\setminus\{\, 0\,\}}$ according to the above discussion. Therefore, by Proposition~\ref{prop:the_result_about_com_von_Neumann_regular_rings}, $(\ring{R},\tau_\jmath)$ is rigid.
%
\end{proof}

\subsection{A counter-example}\label{sec:counterexample}


Let $(\ring{R},\tau)$ be a topological ring, and let us consider the topological $(\ring{R},\tau)^{X}$-module $((R,\tau)^{X})^{X}$ for a given set $X$. To avoid confusion one denotes by $(R^X)^X\xrightarrow{\Pi_x} R^X$ the canonical projections.

Let us define a linear map $(R^{X})^{X}\xrightarrow{\ell}(R,\tau)^{X}$ by setting $\ell(f)\colon x\mapsto (f(x))(x)$, $f\in (R^X)^X$. 
$\ell$ is continuous, and thus belongs to  $ (((R,\tau)^{X})^{X})'$, since for each $x\in X$, $\pi_x\circ \ell=\pi_x\circ \Pi_x$. 
Now, for each $x\in X$, $(\ell(\delta_x^{\ring{R}^X}))(x)=\delta_x^{\ring{R}^X}(x)=1_{\ring{R}^X}$, so that $supp(\hat{\ell})=X$.  Consequently one obtains
\begin{proposition}
Let $(\ring{R},\tau)$ be topological ring, and let $X$ be a set. If $X$ is infinite, then $(\ring{R},\tau)^X$ is not rigid.
\end{proposition}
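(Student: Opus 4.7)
The plan is to feed the construction already laid out in the paragraph preceding the statement into Lemma~\ref{lem:image_of_lambda_X}, applied to the topological ring $(\ring{R},\tau)^X$ in place of $(\ring{R},\tau)$. A continuous $(\ring{R},\tau)^X$-linear functional $\ell$ on $((R,\tau)^X)^X$ has already been produced, so it suffices to show that the associated coefficient map $\hat{\ell}\colon X\to R^X$ is not finitely supported; Lemma~\ref{lem:image_of_lambda_X} then forbids $\ell$ from lying in the image of $\lambda_X$.

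First I would formally record that $\ell$ is genuinely continuous, and hence belongs to $(((R,\tau)^X)^X)'$. By the universal property of the product topology on $(R,\tau)^X$, continuity of $\ell$ reduces to continuity of each composite $\pi_x\circ\ell$, and by the very definition of $\ell$ this composite equals $\pi_x\circ\Pi_x$, which is plainly continuous.

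Next I would compute the coefficient map pointwise. For $x,y\in X$,
\[
\hat{\ell}(x)(y) \;=\; \ell(\delta_x^{\ring{R}^X})(y) \;=\; \delta_x^{\ring{R}^X}(y)(y),
\]
which equals $1_{\ring{R}}$ when $y=x$ and $0$ when $y\neq x$. Thus $\hat{\ell}(x)=\delta_x^{\ring{R}}\in R^X$, and under the (implicit) assumption that $\ring{R}$ is nontrivial this is a nonzero element of $R^X$ for every $x\in X$. Consequently $supp(\hat{\ell})=X$, which is infinite by hypothesis.

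Lemma~\ref{lem:image_of_lambda_X}, applied to the topological ring $(\ring{R},\tau)^X$, then tells us that $\ell\notin im(\lambda_X)$, so $\lambda_X$ is not surjective and, by Definition~\ref{def:rigidity}, $(\ring{R},\tau)^X$ fails to be rigid. The only point requiring any care is the implicit nontriviality of $\ring{R}$ needed to guarantee $\hat{\ell}(x)\neq 0$ for every $x\in X$; everything else is mechanical, the substantive work having already been performed in the paragraph preceding the statement.
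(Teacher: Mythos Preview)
Your proposal is correct and follows exactly the same route as the paper: the continuous linear functional $\ell$ with $\pi_x\circ\ell=\pi_x\circ\Pi_x$ is constructed, its coefficient map is shown to have full support $X$, and Lemma~\ref{lem:image_of_lambda_X} yields the conclusion. Your computation of $\hat{\ell}(x)=\delta_x^{\ring{R}}$ is in fact slightly more precise than the paper's (which writes $(\ell(\delta_x^{\ring{R}^X}))(x)=1_{\ring{R}^X}$, a minor type mismatch), and your remark that nontriviality of $\ring{R}$ is implicitly needed is well taken, since for the trivial ring the statement fails.
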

However the above negative result may balanced by the following.
\begin{proposition}
Let $(\ring{R},\tau)$ be a rigid ring. If $I$ is finite, then $(\ring{R},\tau)^I$ is  rigid too.
\end{proposition}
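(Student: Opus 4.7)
Fix $\ell\in(((\ring{R},\tau)^I)^X)'$; the goal is to exhibit $q\in (R^I)^{(X)}$ with $\ell=\lambda_X(q)$, which by Lemmas~\ref{lem:lambda_is_one_to_one} and~\ref{lem:image_of_lambda_X} suffices for rigidity. Canonically, $((\ring{R},\tau)^I)^X\cong(\ring{R},\tau)^{I\times X}$ as topological $(\ring{R},\tau)$-modules (an element $f\in (R^I)^X$ is identified with $(i,x)\mapsto f(x)(i)$), and the $(\ring{R},\tau)$-structure on the latter is the restriction of the $(\ring{R}^I,\tau^I)$-structure along the diagonal $\ring{R}\hookrightarrow\ring{R}^I$. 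For each $i\in I$, set $\ell_i:=\pi_i\circ\ell$; it is continuous and $\ring{R}$-linear, so by rigidity of $(\ring{R},\tau)$ applied to the set $I\times X$ there exists $p_i\in R^{(I\times X)}$ with
\[ \ell_i(f)=\sum_{(j,x)\in I\times X}p_i(j,x)\,f(x)(j)\qquad (f\in (R^I)^X). \]

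The next step, which is the only substantive one, is to leverage the full $\ring{R}^I$-linearity of $\ell$ (not merely its $\ring{R}$-linearity) to trim the support of each $p_i$. For $k\in I$, let $e_k\in R^I$ denote the idempotent with $e_k(j)=1_{\ring{R}}$ if $j=k$ and $0$ otherwise. Applying $\pi_i$ to the identity $\ell(e_k\cdot f)=e_k\cdot\ell(f)$ yields, when $k\neq i$,
\[ 0\ =\ \sum_x p_i(k,x)\,f(x)(k)\qquad\text{for every }f\in (R^I)^X. \]
Choosing $f$ with $f(x_0)=e_k$ and $f(x)=0$ for $x\neq x_0$ forces $p_i(k,x_0)=0$ for all $x_0\in X$ and all $k\neq i$. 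Hence $p_i$ is concentrated on $\{i\}\times X$; setting $q_i(x):=p_i(i,x)\in R^{(X)}$, one obtains $\ell_i(f)=\sum_x q_i(x)\,f(x)(i)$.

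Finally, define $q\in (R^I)^X$ by $q(x)(i):=q_i(x)$. This is precisely where finiteness of $I$ enters: since $supp(q)\subseteq\bigcup_{i\in I}supp(q_i)$ is a finite union of finite sets, $supp(q)$ is finite, so $q\in (R^I)^{(X)}$. Comparing $i$-components, $\pi_i(\lambda_X(q)(f))=\sum_x q(x)(i)\,f(x)(i)=\ell_i(f)=\pi_i(\ell(f))$ for every $i\in I$ and every $f$, whence $\lambda_X(q)=\ell$, establishing surjectivity of $\lambda_X$.

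The hard part is isolated in the second paragraph: passing from the coordinatewise $\ring{R}$-linear description of the $\ell_i$ provided by rigidity of $(\ring{R},\tau)$ to the stronger $\ring{R}^I$-linearity satisfied by $\ell$, done by testing against the orthogonal idempotents $e_k\in R^I$ to kill off-diagonal contributions. Everything else is bookkeeping around the canonical identification $((R,\tau)^I)^X\cong (R,\tau)^{I\times X}$ together with the observation that, for $I$ finite, pointwise-finite support assembles into genuinely finite support in $R^I$.
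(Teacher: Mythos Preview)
Your proof is correct. Both your argument and the paper's share the same skeleton: identify $((R,\tau)^I)^X$ with $(R,\tau)^{I\times X}$ as $(\ring{R},\tau)$-modules, project $\ell$ to its components $\ell_i=\pi_i\circ\ell$, apply rigidity of $(\ring{R},\tau)$ to each $\ell_i$, and then use finiteness of $I$ to assemble a finite-support conclusion.

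The difference is in how the conclusion is reached. The paper never constructs $q$ explicitly: it simply observes that $supp(\hat{\ell})=\bigcup_{i\in I}\{x: \pi_i(\ell(\delta_x^{\ring{R}^I}))\neq 0\}$, notes that each of these sets is finite (being contained in the $X$-projection of the finite set $supp(p_i)\subseteq I\times X$), and then invokes Lemma~\ref{lem:image_of_lambda_X} to conclude $\ell\in im(\lambda_X)$. In particular the paper never uses the full $(\ring{R}^I)$-linearity of $\ell$ beyond its restriction to $\ring{R}$-linearity. Your idempotent argument with the $e_k$'s is a genuinely additional step: it trims each $p_i$ down to $\{i\}\times X$ so that you can write down $q$ explicitly and verify $\lambda_X(q)=\ell$ by hand. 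This is more constructive and arguably clearer about \emph{what} the preimage is, but it is extra work; the paper's route is shorter because the characterization in Lemma~\ref{lem:image_of_lambda_X} lets one bypass the explicit construction entirely. Your claim that the idempotent step is ``the only substantive one'' is thus slightly misleading---it is substantive for your chosen constructive route, but the paper shows one can do without it.
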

\begin{proof}
Let $(\ring{R},\tau)$ be a topological ring. For a set $I$, one recalls  that  $(\ring{R},\tau)^I$ is the underlying ring $(\ring{R},\tau)^I$ (Notation~\ref{notation:top_ring_of_functions}) of $A_{(\ring{R},\tau)}(I)$. Any topological $(\ring{R},\tau)^I$-module is also a topological $(\ring{R},\tau)$-module under restriction of scalars\footnote{Let $(\ring{R},\tau)$ and $(\ring{S},\sigma)$ be  topological rings, and let $(\ring{R},\tau)\xrightarrow{f}(\ring{S},\sigma)$ be a continuous ring map. It may be used to transform a topological $(\ring{S},\sigma)$-module into a topological $(\ring{R},\tau)$-module by restriction of scalars along $f$. In details, let $(M,\gamma)$ be a topological $(\ring{S},\sigma)$-module. There is a scalar action of $\ring{R}$ on  $M$ given by $\alpha\cdot v:=f(\alpha)v$, $\alpha\in R$, $v\in M$ (where by juxtaposition is denoted the scalar action $\ring{S}\times M\to M$). Furthermore this action is again continuous (by composition of continuous maps). Let $f^*(M,\gamma)$ be the topological $(\ring{R},\tau)$-module just obtained. At present let $(M,\gamma)\xrightarrow{g}(N,\pi)$ be a continuous $(\ring{S},\sigma)$-linear map.  $g$ is also $\ring{R}$-linear because of $g(\alpha\cdot v)=g(f(\alpha)v)=f(\alpha)g(v)=\alpha\cdot g(v)$, and thus provides a continuous $(\ring{R},\tau)$-linear map $f^*(M,\gamma)\xrightarrow{g}f^*(N,\pi)$. All this results in a functor $\mathbf{TopMod}_{(\ring{S},\sigma)}\xrightarrow{f^*}\mathbf{TopMod}_{(\ring{R},\tau)}$ of {\em restriction of scalars along $f$}.}   along the unit map $(\ring{R},\tau)\xrightarrow{\eta_I}(\ring{R},\tau)^I$, $\eta_I(1_{\ring{R}})=1_{\ring{R}^I}$, which of course is a ring map, and is continuous (because $\eta_I(\alpha)=m_{\ring{R}^I}(\eta_I(\alpha),1_{\ring{R}^I})$, $\alpha\in R$.) 

Let $X$ be a set, and let $\ell\in ((({R},\tau)^I)^X)'$, i.e., $(({R},\tau)^I)^X\xrightarrow{\ell}(R,\tau)^I$ is continuous and $(\ring{R},\tau)^I$-linear, and by restriction of scalar along $\eta_I$ it is also a continuous  $(\ring{R},\tau)$-linear. Therefore for each $i\in I$, $(({R},\tau)^I)^X\xrightarrow{\ell}(R,\tau)^I\xrightarrow{\pi_i}(R,\tau)$ belongs to the topological dual space of $((R,\tau)^I)^X$ seen as a $(\ring{R},\tau)$-module. 

Let us assume that $(\ring{R},\tau)$ is rigid. Then, by Lemma~\ref{lem:image_of_lambda_X}, $supp(\widehat{\pi_i\circ \ell})$ is finite for each $i\in I$. One also has $supp(\hat{\ell})=\bigcup_{i\in I}supp(\widehat{\pi_i\circ \ell})$, with $X\xrightarrow{\hat{\ell}}R^I$, $\hat{\ell}(x):=\ell(\delta_x^{\ring{R}^I})$, $x\in X$. Whence if $I$ is finite, then $supp(\hat{\ell})$ is finite too. 
\end{proof}

\section{Rigidity as an equivalence of categories}\label{sec:recasting}

The main result of this section is Theorem~\ref{thm:equiv_of_cats} which provides a translation of the rigidity condition on a topological ring into a dual equivalence between the category of free modules and that of topologically-free modules (see below), provided by the topological dual functor with equivalence inverse the (opposite of the) algebraic dual functor, with both functors conveniently co-restricted. The purpose of this section thus is to prove this result.
%
%

\paragraph{Topologically-free modules.}
%
Let $(\ring{R},\tau)$ be a topological ring. Let $(M,\sigma)$ be a topological $(R,\tau)$-module. It is said to be a {\em topologically-free $(\ring{R},\tau)$-module}\index{Topologically-free module} if $(M,\sigma)\simeq (R,\tau)^{X}$, in $\mathbf{TopMod}_{(\ring{R},\tau)}$, for some set $X$. Such topological modules span the full subcategory $\mathbf{TopFreeMod}_{(\ring{R},\tau)}$\index{TopFreeMod@$\mathbf{TopFreeMod}_{(\ring{R},\tau)}$} of $\mathbf{TopMod}_{(\ring{R},\tau)}$. For a field $(\mathbb{k},\tau)$  with a ring topology, one defines correspondingly the category $\mathbf{TopFreeVect}_{(\mathbb{k},\tau)}\hookrightarrow\mathbf{TopVect}_{(\mathbb{k},\tau)}$\index{TopFreeVect@$\mathbf{TopFreeVect}_{(\mathbb{k},\tau)}$} of {\em topologically-free $(\mathbb{k},\tau)$-vector spaces}\index{Topologically-free vector space}. 
%

\begin{remark}\label{rem:corestriction_of_the_power_functor}
The topological power functor $\mathbf{Set}^{\mathsf{op}}\xrightarrow{P_{(\ring{R},\tau)}}\mathbf{TopMod}_{(\ring{R},\tau)}$  factors as indicated below (the co-restriction obtained is also called $P_{(\ring{R},\tau)}$). 
\begin{equation}
\xymatrix@R=1pc{
\mathbf{Set}^{\mathsf{op}}\ar[r]^-{P_{(\ring{R},\tau)}}\ar[rd]_-{P_{(\ring{R},\tau)}}&\mathbf{TopMod}_{(\ring{R},\tau)}\\
&\mathbf{TopFreeMod}_{(\ring{R},\tau)}\ar@{^{(}->}[u]
}
\end{equation}
\end{remark}

Topologically-free modules are characterized by the fact of possessing  ``topological bases'' (see Corollary~\ref{cor:top_basis} below) which makes easier a number of calculations and proofs, once such a basis is chosen. 
\begin{definition}\label{def:top_basis}
Let $(M,\sigma)$ be a topological $(\ring{R},\tau)$-module. Let $B\subseteq M$. It is said to be a {\em topological basis}\index{Topological basis} of $(M,\sigma)$ if the following hold.
\begin{enumerate}
\item For each $v\in M$, there exists a unique family $(b'(v))_{b\in B}$, with $b'(v)\in R$ for each $b\in B$, such that $(b'(v)b)_b$  is summable in $(M,\sigma)$ with sum $v$. $b'(v)$ is referred to as the {\em coefficient} of $v$ at $b\in B$.
\item For each family $(\alpha_b)_{b\in B}$ of elements of $R$, there is a  member $v$ of $M$ such that $b'(v)=\alpha_b$, $b\in B$. (By the above point such $v$ is unique.)
\item $\sigma$ is equal to the initial topology induced by the {\em (topological) coefficient maps}\index{Topological coefficient maps} $(M\xrightarrow{b'}(R,\tau))_{b\in B}$\index{b2@$b'$}. (According to the two above points, each $b'$ is $\ring{R}$-linear.)
\end{enumerate}
\end{definition}

\begin{remark}\label{rem:top_coef_maps_as_delta}
It is an immediate consequence of the definition that for a topological basis $B$ of some topological module, $0\not\in B$ and  $b'(d)=\delta_{b}(d)$, $b,d\in B$  (since $\sum_{b\in B}\delta_b(d)b=d=\sum_{b\in B}b'(d)b$). In particular, $B\xrightarrow{(-)'}B':=\{\, b'\colon b\in B\,\}$\index{B2@$B'$} is a bijection.
\end{remark}

\begin{lemma}\label{lem:change_of_top_basis}
Let $(M,\sigma)$ and $(N,\gamma)$ be isomorphic topological $(\ring{R},\tau)$-modules. Let $\Theta\colon (M,\sigma)\simeq (N,\gamma)$ be an isomorphism (in $\mathbf{TopMod}_{(\ring{R},\tau)}$). Let $B$ be a topological basis of $(M,\sigma)$. Then, $\Theta(B)=\{\, \Theta(b)\colon b\in B\,\}$ is a topological basis of $(N,\gamma)$. 
\end{lemma}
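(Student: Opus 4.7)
The plan is to verify the three defining properties of a topological basis (Definition~\ref{def:top_basis}) for the set $\Theta(B) \subseteq N$, by transporting the corresponding properties of $B$ through the topological module isomorphism $\Theta$. The main fact to exploit is that a continuous $\ring{R}$-linear map between topological $(\ring{R},\tau)$-modules preserves summability of families and their sums (the image of the convergent net of finite partial sums is, by continuity and additivity, again the net of finite partial sums of the images, converging to the image of the sum), and that a homeomorphism reflects summability as well.

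To check properties $1$ and $2$, I would first write down the natural candidate for the topological coefficient maps associated to $\Theta(B)$. For each $b \in B$, put $(\Theta(b))' := b' \circ \Theta^{-1} \colon N \to R$, which is $\ring{R}$-linear and continuous. Given $w \in N$, set $v := \Theta^{-1}(w) \in M$; since $B$ is a topological basis, $(b'(v) b)_{b \in B}$ is summable in $(M,\sigma)$ with sum $v$, so applying the continuous linear map $\Theta$ yields that $(b'(v)\,\Theta(b))_{b \in B} = ((\Theta(b))'(w)\, \Theta(b))_{b \in B}$ is summable in $(N,\gamma)$ with sum $\Theta(v) = w$. Uniqueness of the coefficients $(\Theta(b))'(w)$ follows by applying the continuous linear $\Theta^{-1}$ to any competing representation and using uniqueness of the coefficients of $v$ in $B$. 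Property $2$ is immediate: given any family $(\alpha_c)_{c \in \Theta(B)}$ in $R$, apply property $2$ for $B$ to $(\alpha_{\Theta(b)})_{b \in B}$ to obtain $v \in M$ with $b'(v) = \alpha_{\Theta(b)}$, and take $w := \Theta(v)$.

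For property $3$, I would argue that $\gamma$ coincides with the initial topology $\gamma'$ on $N$ induced by the family $((\Theta(b))')_{b \in B}$. Since each $(\Theta(b))' = b' \circ \Theta^{-1}$ factors through $(M,\sigma) \xrightarrow{b'} (R,\tau)$ and $\Theta^{-1}$ is continuous (as $\Theta$ is an isomorphism in $\mathbf{TopMod}_{(\ring{R},\tau)}$), all these maps are continuous for $\gamma$, so $\gamma' \subseteq \gamma$. Conversely, by transitivity of initial topologies, $\gamma'$ is also the initial topology induced by the single map $N \xrightarrow{\Theta^{-1}} (M,\sigma)$ (because $\sigma$ is itself the initial topology induced by $(b')_{b \in B}$, by the topological-basis property of $B$), and with respect to this initial topology $\Theta^{-1}$ is a homeomorphism onto $(M,\sigma)$; but $\Theta$ is already a homeomorphism from $(N,\gamma)$ onto $(M,\sigma)$, so $\gamma = \gamma'$.

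There is no real obstacle here: the whole statement is a routine transport-of-structure argument once one records that a continuous $\ring{R}$-linear map preserves summability and that initial topologies compose well. The only point requiring a small amount of care is the uniqueness clause in property $1$, where one must argue that uniqueness of $B$-coefficients in $M$ is genuinely transferred to $\Theta(B)$-coefficients in $N$; this is handled by applying the continuous linear $\Theta^{-1}$ to any hypothetical alternative summable representation of $w$ and invoking uniqueness in $(M,\sigma)$.
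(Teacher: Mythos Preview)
Your proposal is correct and complete. The paper actually states this lemma without proof, treating it as a routine transport-of-structure fact; your argument supplies exactly the details one would expect, verifying each clause of Definition~\ref{def:top_basis} by pushing forward along $\Theta$ and pulling back along $\Theta^{-1}$. The only implicit step worth making explicit is that $\Theta$, being injective, gives a bijection $B\to\Theta(B)$, so that re-indexing the families by $\Theta(B)$ rather than $B$ is harmless for summability; otherwise there is nothing to add.
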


\begin{corollary}\label{cor:top_basis}
Let $(M,\sigma)$ be a (Hausdorff) topological $(\ring{R},\tau)$-module. It admits a topological basis if, and only if, it is topologically-free.
\end{corollary}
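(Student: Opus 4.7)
I would prove the two implications of Corollary~\ref{cor:top_basis} separately, with the forward direction essentially reducing to an application of Lemma~\ref{lem:change_of_top_basis} and the reverse direction amounting to constructing an explicit isomorphism out of a topological basis.

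For the ``topologically-free $\Rightarrow$ topological basis'' direction, the plan is to first verify that in $(R,\tau)^X$ the set $\{\,\delta_x^{\ring{R}}\colon x\in X\,\}$ is a topological basis. Summability of $(f(x)\delta_x^{\ring{R}})_{x\in X}$ with sum $f$ is exactly Lemma~\ref{lem:summability_of_maps}, which gives condition (1) of Definition~\ref{def:top_basis} with $(\delta_x^{\ring{R}})'=\pi_x$; condition (2) follows from the fact that for any family $(\alpha_x)_{x\in X}$ of scalars the map $x\mapsto \alpha_x$ is an element of $R^X$; and condition (3) is immediate because the product topology on $(R,\tau)^X$ is by construction the initial topology induced by the projections $\pi_x$, which here coincide with the topological coefficient maps. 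Then applying Lemma~\ref{lem:change_of_top_basis} to any isomorphism $(M,\sigma)\simeq (R,\tau)^X$ transports this basis to a topological basis of $(M,\sigma)$.

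For the ``topological basis $\Rightarrow$ topologically-free'' direction, let $B$ be a topological basis of $(M,\sigma)$ and consider the $\ring{R}$-linear map $\Phi\colon M\to R^B$ defined by $\pi_b\circ \Phi= b'$, i.e.\ $\Phi(v)=(b'(v))_{b\in B}$. Linearity follows from linearity of each $b'$ (as noted in Definition~\ref{def:top_basis}). Surjectivity of $\Phi$ is precisely condition (2) of the definition. For injectivity, if $b'(v)=b'(w)$ for all $b\in B$, then by condition (1) both $(b'(v)b)_b$ and $(b'(w)b)_b$ are summable with respective sums $v$ and $w$; since the families are pointwise equal and the topology is Hausdorff (so sums are unique when they exist), we get $v=w$. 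Continuity of $\Phi$ from $(M,\sigma)$ to $(R,\tau)^B$ amounts, by the universal property of the product topology, to continuity of each $\pi_b\circ\Phi=b'$, which is guaranteed by condition (3). Continuity of $\Phi^{-1}$ amounts, by the universal property of the initial topology $\sigma$ (condition (3) again), to continuity of each $b'\circ \Phi^{-1}=\pi_b$, which holds by definition of the product topology on $(R,\tau)^B$. Hence $\Phi\colon (M,\sigma)\xrightarrow{\sim}(R,\tau)^B$ is an isomorphism in $\mathbf{TopMod}_{(\ring{R},\tau)}$, so $(M,\sigma)$ is topologically-free.

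I do not expect any real obstacle: both implications unpack the definitions directly. The only subtle point is the use of Hausdorff-ness to guarantee uniqueness of sums in the injectivity step, and the symmetric use of the two universal properties (product topology on the target, initial topology from the $b'$'s on the source) to get continuity of $\Phi$ and of $\Phi^{-1}$ simultaneously.
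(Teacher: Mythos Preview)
Your proof is correct and is exactly the argument the paper has in mind: the statement is left as an unproved corollary of Lemma~\ref{lem:change_of_top_basis}, with Example~\ref{ex:top_basis_of_R_to_X} (immediately following) recording that $\{\delta_x\colon x\in X\}$ is a topological basis of $(R,\tau)^X$ with $\delta_x'=\pi_x$, and the reverse direction being the obvious coefficient map $v\mapsto (b'(v))_{b\in B}$. There is nothing to add.
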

%

\begin{example}\label{ex:top_basis_of_R_to_X}
Let $(\ring{R},\tau)$ be a topological ring. 
For each set $X$, $\{\, \delta_x\colon x\in X\,\}$ is a topological basis of $(R,\tau)^X$. Moreover $\pi_x=\delta_x'$, $x\in X$.
\end{example}


Let us now take the time to establish a certain number of quite useful properties of topological bases.
\begin{lemma}\label{lem:top_basis_is_linearly_independent_and_its_span_is_dense}
Let $(M,\sigma)$ be a topologically-free $(\ring{R},\tau)$-module with topological basis $B$. Then, $B$ is $\ring{R}$-linearly independent and the linear span $\langle B\rangle$\index{$\langle B\rangle$} of $B$ is dense in $(M,\sigma)$.
\end{lemma}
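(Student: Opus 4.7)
The plan is to deduce both claims directly from the defining properties of a topological basis (Definition~\ref{def:top_basis}) together with Remark~\ref{rem:top_coef_maps_as_delta}. Neither part should require any topological subtlety beyond unpacking the notion of summability.

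For linear independence, I would start from an arbitrary finite $\ring{R}$-linear relation $\sum_{i=1}^{n}\alpha_i b_i=0$, where the $b_i\in B$ are pairwise distinct. Since every topological coefficient map $b'\colon M\to(R,\tau)$ is $\ring{R}$-linear (Definition~\ref{def:top_basis}(3)), I may evaluate $b_j'$ at both sides of this equation for each $j\in\{1,\dots,n\}$. Using Remark~\ref{rem:top_coef_maps_as_delta}, $b_j'(b_i)=\delta_{b_j}(b_i)$ equals $1_{\ring{R}}$ when $i=j$ and $0$ otherwise, so the relation collapses to $\alpha_j=0$. Letting $j$ vary gives $\alpha_1=\cdots=\alpha_n=0$, as required.

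For density of $\langle B\rangle$ in $(M,\sigma)$, I would invoke the first clause of Definition~\ref{def:top_basis}: every $v\in M$ is the sum in $(M,\sigma)$ of the summable family $(b'(v)b)_{b\in B}$. By the definition of summability (\cite[Definition~10.3, p.~72]{Warner}), this means the net of finite partial sums $\bigl(\sum_{b\in F}b'(v)b\bigr)_{F}$, indexed by finite subsets $F\subseteq B$ directed by inclusion, converges to $v$ in $(M,\sigma)$. Each such partial sum lies in $\langle B\rangle$, so $v$ belongs to the closure of $\langle B\rangle$. Since $v$ was arbitrary, $\langle B\rangle$ is dense in $(M,\sigma)$.

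The argument is short enough that there is no real obstacle; the only thing one has to be careful about is invoking linearity of the $b'$ (which is provided free of charge by Definition~\ref{def:top_basis}(3) rather than needing an independent verification) and recognizing that ``$v$ is the sum of $(b'(v)b)_b$'' is precisely the statement that finite linear combinations of elements of $B$ converge to $v$, which is exactly what density asks for.
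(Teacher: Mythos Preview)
Your proof is correct. For linear independence you apply the coefficient maps to the relation, while the paper invokes the uniqueness clause of Definition~\ref{def:top_basis}(1) directly (writing $0=\sum_{b\in B}0\cdot b$); these are two sides of the same coin. For density the paper instead verifies the neighborhood characterization explicitly: given $u\in M$ and a basic neighborhood $V$ determined by finitely many coefficient maps, it constructs $v\in V$ with $u+v\in\langle B\rangle$. Your route via convergence of the finite partial sums of $(b'(v)b)_{b\in B}$ is a bit more direct and exploits summability at face value; the paper's argument makes the role of Definition~\ref{def:top_basis}(3) (the topology being the initial one for the $b'$) more visible. Both arguments are immediate from the definition, and neither offers any real advantage over the other.
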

\begin{proof}
Concerning the assertion of independence, it suffices to note that $0$ may be written as $\sum_{b\in B}0b$, and conclude by the uniqueness of the decomposition in a topological basis.  
Let $u\in M$ and let $V:=\{\, v\in M\colon b'(v)\in U_b,\ b\in A\,\}\in \mathfrak{V}_{(M,\sigma)}(0)$, where $A$ is a finite subset of $B$ and $U_b\in \mathfrak{V}_{(\ring{R},\tau)}(0)$, $b\in A$. Let $\alpha_b\in U_b$, $b\in A$, and $v:=\sum_{b\in A}\alpha_b b-\sum_{b\in B\setminus A}b'(u)b\in V$. So $u+v\in \langle B\rangle$. Thus, $u+V$ meets $\langle B\rangle$ and $\langle B\rangle$ is dense in $(M,\sigma)$. 
\end{proof}

\begin{corollary}\label{cor:equality_of_conti_lin_map_on_top_basis}
Let $(M,\sigma)$  be a topologically-free $(\ring{R},\tau)$-module, and let $(N,\gamma)$ be a topological $(\ring{R},\tau)$-module.  Let $(M,\sigma)\xrightarrow{f,g}(N,\gamma)$ be two continuous $(\ring{R},\tau)$-linear maps. $f=g$ if, and only if, for any topological basis $B$ of $(M,\sigma)$, $f(b)=g(b)$ for each $b\in B$.
\end{corollary}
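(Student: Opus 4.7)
The proof splits into the two implications, of which only the converse direction requires work. The ``only if'' direction is immediate: if $f=g$, then in particular their values agree at every point, hence at each element of any topological basis.

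For the converse, my plan is to propagate equality on $B$ to equality on all of $M$ via two ingredients: linearity and density. First, assume $f(b)=g(b)$ for every $b\in B$, where $B$ is a topological basis of $(M,\sigma)$. By $\ring{R}$-linearity of both $f$ and $g$, the equality immediately extends to every finite $\ring{R}$-linear combination of elements of $B$, i.e., $f$ and $g$ agree on the linear span $\langle B\rangle$. Second, I invoke Lemma~\ref{lem:top_basis_is_linearly_independent_and_its_span_is_dense} to conclude that $\langle B\rangle$ is dense in $(M,\sigma)$.

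To finish, I use the standard fact that two continuous maps into a Hausdorff space which coincide on a dense subset must coincide everywhere. Concretely, consider the equalizer $E:=\{\,v\in M\colon f(v)=g(v)\,\}$. Writing $(f,g)\colon (M,\sigma)\to (N,\gamma)\times (N,\gamma)$ for the induced continuous map and $\Delta_N\subseteq N\times N$ for the diagonal, one has $E=(f,g)^{-1}(\Delta_N)$. Since $(N,\gamma)$ is Hausdorff (by the blanket convention fixed in Section~\ref{conv:convention1}), $\Delta_N$ is closed in $(N,\gamma)\times (N,\gamma)$, hence $E$ is closed in $(M,\sigma)$. But $\langle B\rangle\subseteq E$ by the previous paragraph and $\langle B\rangle$ is dense, so $E=\overline{\langle B\rangle}=M$, i.e., $f=g$.

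I do not anticipate any real obstacle here: the only subtle point is remembering to invoke the Hausdorff hypothesis on $(N,\gamma)$, which is guaranteed by the standing convention that all topologies in the paper are separated, and to cite Lemma~\ref{lem:top_basis_is_linearly_independent_and_its_span_is_dense} for the density of $\langle B\rangle$. The quantifier ``for any topological basis $B$'' in the statement is harmless: the ``only if'' direction covers every basis trivially, and in the ``if'' direction one only needs the hypothesis for a single (existing) topological basis to conclude.
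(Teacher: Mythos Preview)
Your argument is correct and is exactly the intended one: the paper states this result as an immediate corollary of Lemma~\ref{lem:top_basis_is_linearly_independent_and_its_span_is_dense} without giving a separate proof, relying precisely on density of $\langle B\rangle$ together with the standing Hausdorff assumption on $(N,\gamma)$. Your explicit equalizer/diagonal formulation just spells out this standard step.
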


Topologically-free modules allow for the definition of changes of topological bases (see Proposition~\ref{prop:free_top_mod} for a more general construction).
\begin{lemma}\label{lem:top_iso_from_top_basis}
Let $(M,\sigma)$ and $(N,\gamma)$ be two topologically-free $(\ring{R},\tau)$-modules, and let $B,D$ be respective topological bases. Let $f\colon B\to D$ be a bijection. Then, there is a unique isomorphism $g$ in $\mathbf{TopMod}_{(\ring{R},\tau)}$ such that $g(b)=f(b)$, $b\in B$. 
\end{lemma}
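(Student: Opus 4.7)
The plan is to split the argument into uniqueness (which is immediate) and existence (which requires a direct construction via the basis axioms).

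For uniqueness, suppose $g_1,g_2\colon (M,\sigma)\to(N,\gamma)$ are two continuous $(\ring{R},\tau)$-linear maps with $g_i(b)=f(b)$ for all $b\in B$. Then $g_1=g_2$ by Corollary~\ref{cor:equality_of_conti_lin_map_on_top_basis}, since they agree on the topological basis $B$ of $(M,\sigma)$. Note this also shows any such $g$ is automatically uniquely determined by its values on $B$; so it remains to build at least one candidate.

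For existence, given $v\in M$, I apply axiom (2) of Definition~\ref{def:top_basis} to the topological basis $D=f(B)$ of $(N,\gamma)$ with the family of scalars $(\alpha_d)_{d\in D}$ defined by $\alpha_d:=(f^{-1}(d))'(v)$. This yields a unique element $g(v)\in N$ with $d'(g(v))=(f^{-1}(d))'(v)$ for all $d\in D$, and by axiom (1), $(d'(g(v))\,d)_{d\in D}$ is summable with sum $g(v)$; reindexing along $f$ gives
\[
g(v)=\sum_{b\in B}b'(v)\,f(b).
\]
Linearity of $g$ follows at once from linearity of each coefficient map $b'$ (Remark~\ref{rem:top_coef_maps_as_delta}) together with the fact that summability and sums commute with finite linear combinations of scalars (Lemma~\ref{lem:summability_of_maps} applied componentwise, plus continuity of the module operations). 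For continuity, observe that by construction $d'\circ g=(f^{-1}(d))'$ for every $d\in D$; since $(f^{-1}(d))'$ is continuous on $(M,\sigma)$ by axiom (3) of the basis $B$, and since $\gamma$ is the initial topology induced by the family $(d')_{d\in D}$, $g$ is continuous.

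For the inverse, apply the same construction to the bijection $f^{-1}\colon D\to B$: it yields a continuous $(\ring{R},\tau)$-linear map $h\colon(N,\gamma)\to(M,\sigma)$ with $h(d)=f^{-1}(d)$ for all $d\in D$. Then $h\circ g$ and $g\circ h$ are continuous linear endomorphisms that agree with the identities on the respective topological bases $B$ and $D$ (sending $b\mapsto h(f(b))=f^{-1}(f(b))=b$, and symmetrically), so by Corollary~\ref{cor:equality_of_conti_lin_map_on_top_basis} they equal the identities, making $g$ an isomorphism in $\mathbf{TopMod}_{(\ring{R},\tau)}$. Finally $g(b)=f(b)$ for every $b\in B$ because axiom (2) of the basis $D$ determines $g(b)$ as the unique element of $N$ with coefficients $(c'(b))_{c\in B}=(\delta_c(b))_{c\in B}$, i.e.\ coefficient $1$ at $f(b)\in D$ and $0$ elsewhere, which is $f(b)$ itself.

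The only mild subtlety is ensuring that $(b'(v)f(b))_{b\in B}$ is summable in $(N,\gamma)$, but this is precisely what axiom (2) of a topological basis guarantees once one reads it with the basis $D=f(B)$; no extra work is needed beyond unwinding the definitions. Everything else is a routine application of the initial-topology characterization in axiom (3) and the uniqueness statement of Corollary~\ref{cor:equality_of_conti_lin_map_on_top_basis}.
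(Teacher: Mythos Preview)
Your proof is correct and follows essentially the same approach as the paper: uniqueness via Corollary~\ref{cor:equality_of_conti_lin_map_on_top_basis}, then define $g(v)=\sum_{b\in B}b'(v)f(b)$ via axiom~(2) for the basis $D$, and check continuity through $d'\circ g=(f^{-1}(d))'$. You are more explicit than the paper in constructing the inverse $h$ from $f^{-1}$ and verifying $h\circ g=id$, $g\circ h=id$ via Corollary~\ref{cor:equality_of_conti_lin_map_on_top_basis}; the paper leaves this implicit. A minor quibble: your appeal to Lemma~\ref{lem:summability_of_maps} for linearity is not the most apt citation (that lemma concerns summability in $(R,\tau)^X$); linearity of $g$ follows directly from linearity of each $b'$ (noted in Definition~\ref{def:top_basis}(3)) and the uniqueness in axiom~(1).
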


\begin{proof}
The question of uniqueness is settled by Corollary~\ref{cor:equality_of_conti_lin_map_on_top_basis}. If such an isomorphism $g$ exists, then $g(v)=g\left (\sum_{b\in B}b'(v)b\right)=\sum_{b\in B}b'(v)g(b)=\sum_{b\in B}b'(v)f(b)=
\sum_{d\in D}(f^{-1}(d))'(v)d$, $v\in M$. One observes that $g$ as defined by the right hand-side of the last equality, is $\ring{R}$-linear, and it is also continuous since for each $d\in D$,  $d'\circ g=(f^{-1}(d))'$. 
\end{proof}

\begin{lemma}\label{lem:dual_top_basis}
Let $M$ be a free module with basis $B$. Then, $(M^*,w^*_{(\ring{R},\tau)})$ is a topologically-free module with topological basis $B^*:=\{\, b^*\colon b\in B\,\}$ (see Remark~\ref{rem:coefficient_maps}). 
\end{lemma}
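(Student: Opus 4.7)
My plan is to reduce this to the already established model case $(R,\tau)^B$ via Lemma~\ref{lem:dual_of_free_is_top_free_mod} combined with the ``transport of topological basis'' result Lemma~\ref{lem:change_of_top_basis}.

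Concretely, I would first recall (or briefly reconstruct) the isomorphism of Lemma~\ref{lem:dual_of_free_is_top_free_mod}. Define $\Phi \colon (R,\tau)^B \to (M^*, w^*_{(\ring{R},\tau)})$ by $\Phi(f)(v) := \sum_{b \in B} b^*(v) f(b)$; this sum is actually finite for each fixed $v \in M$, since $b^*(v) = 0$ for all but finitely many $b$. Its inverse is $\Psi \colon \ell \mapsto (\ell(b))_{b \in B}$. The key computation is $\Psi(b^*)(d) = b^*(d) = \delta_b(d)$ by Remark~\ref{rem:coefficient_maps}, hence $\Psi(b^*) = \delta_b$ in $R^B$, equivalently $\Phi(\delta_b) = b^*$ for each $b \in B$.

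Next, by Example~\ref{ex:top_basis_of_R_to_X}, $\{\delta_b : b \in B\}$ is a topological basis of $(R,\tau)^B$. Since $\Phi$ is an isomorphism in $\mathbf{TopMod}_{(\ring{R},\tau)}$ (the continuity of $\Phi$ and $\Phi^{-1}$ being exactly the content of Lemma~\ref{lem:dual_of_free_is_top_free_mod}, whose proof is essentially identical to that of Lemma~\ref{lem:rho_X_is_iso}), Lemma~\ref{lem:change_of_top_basis} immediately yields that $\Phi(\{\delta_b : b \in B\}) = \{b^* : b \in B\} = B^*$ is a topological basis of $(M^*, w^*_{(\ring{R},\tau)})$.

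There is no real obstacle: all three ingredients (the abstract isomorphism, the concrete basis $\{\delta_b\}$ of the power, and the transport lemma) are already in place. The only point that deserves a line of verification is the identification $\Phi(\delta_b) = b^*$, which is what ensures that the transported basis is precisely $B^*$ rather than some other labelling.
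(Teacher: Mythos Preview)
Your proposal is correct and follows essentially the same approach as the paper's proof. The paper factors your isomorphism $\Phi$ as the composite $\theta_B^*\circ\rho_B$ (where $\theta_B\colon M\to R^{(B)}$ sends $b\mapsto\delta_b$ and $\rho_B$ is the isomorphism of Lemma~\ref{lem:rho_X_is_iso}), but this is exactly your $\Phi$ written componentwise; the key computation $\Phi(\delta_b)=b^*$ and the appeal to Lemma~\ref{lem:change_of_top_basis} are identical.
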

\begin{proof}
According to Lemma~\ref{lem:dual_of_free_is_top_free_mod}, $(M^*,w^*_{(\ring{R},\tau)})$ is a topologically-free module.  Let $M\xrightarrow{\theta_B}R^{(B)}$ be the isomorphism given by $\theta_B(b)=\delta_b$, $b\in B$. Thus, $\theta_B^*\colon (R^{(B)})^*\simeq M^*$, and $\theta_B^*\circ\rho_B\colon R^B\simeq (R^{(B)})^*\simeq M^*$ is given by $\theta_B^*(\rho_B(\delta^{\ring{R}}_{b}))=\rho_B(\delta^{\ring{R}}_{b})\circ\theta_B=p_b\circ \theta_B=b^*$ for $b\in B$ (see Example~\ref{ex:p_x} for the definition of $p_b$). Now, $\{\, \delta_b\colon b\in B\,\}$ being a topological basis of $R^B$, by Lemma~\ref{lem:change_of_top_basis}, this shows that $B^*$ is a topological basis of $(M^*,w^*_{(\ring{R},\tau)})$. 
\end{proof}

\begin{example}\label{ex:top_basis_of_alg_dual_of_fin_supp_maps}
$\{\, p_x\colon x\in X\,\}$ is a topological basis of $(R^{(X)})^*$ (Example~\ref{ex:p_x}). 
\end{example}

\begin{remark}
If $B$ is a basis of a free module $M$, then  $B\simeq B^*$ under $b\mapsto b^*$, because for each $b,d\in B$, $b^*(d)=\delta_{b}(d)$. 
\end{remark}

\begin{corollary}\label{cor:free_to_top_free_mod}
Let $(\ring{R},\tau)$ be a topological ring. The algebraic dual functors $\mathbf{Mod}_{\ring{R}}^{\mathsf{op}}\xrightarrow{Alg_{(\ring{R},\tau)}}\mathbf{TopMod}_{(\ring{R},\tau)}$ factors as illustrated in the diagram below\footnote{When $\mathbb{k}$ is a field with a ring topology $\tau$, then one has the corresponding factorization of $\mathbf{Vect}_{\mathbb{k}}^{\mathsf{op}}\xrightarrow{Alg_{(\mathbb{k},\tau)}}\mathbf{TopVect}_{(\mathbb{k},\tau)}$. \begin{equation}
\xymatrix@R=1pc{
\mathbf{Vect}_{\mathbb{k}}^{\mathsf{op}}\ar[r]^{Alg_{(\mathbb{k},\tau)}} \ar[rd]& \mathbf{TopVect}_{(\mathbb{k},\tau)}\\
 &\mathbf{TopFreeVect}_{(\mathbb{k},\tau)}\ar@{^{(}->}[u]
}
\end{equation}}. Moreover the resulting co-restriction of $Alg_{(\ring{R},\tau)}$  (the bottom arrow of the diagram) is essentially surjective\footnote{A functor $\mathbf{C}\xrightarrow{F}\mathsf{D}$ is {\em essentially surjective} when each object $D$ in $\mathbf{D}$ is isomorphic to an object of the form $FC$, for some object $C$ in $\mathbf{C}$. Whence an equivalence of categories is a fully faithful and essentially surjective functor (see~\cite{MacLane}).}.  
\begin{equation}
\xymatrix@R=1pc{
\mathbf{Mod}_{\ring{R}}^{\mathsf{op}}\ar[r]^{Alg_{(\ring{R},\tau)}} & \mathbf{TopMod}_{(\ring{R},\tau)}\\
\mathbf{FreeMod}_{\ring{R}}^{\mathsf{op}} \ar@{^{(}->}[u]\ar[r] &\mathbf{TopFreeMod}_{(\ring{R},\tau)}\ar@{^{(}->}[u]
}
\end{equation}
\end{corollary}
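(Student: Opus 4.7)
The plan is to establish the two claims — the factorization and the essential surjectivity of the co-restriction — by directly appealing to the two structural results already proved in Section~\ref{subsec:algebraic_dual_functor}, namely Lemma~\ref{lem:dual_of_free_is_top_free_mod} and Lemma~\ref{lem:rho_X_is_iso}. Both full subcategories $\mathbf{FreeMod}_{\ring{R}}\hookrightarrow\mathbf{Mod}_{\ring{R}}$ and $\mathbf{TopFreeMod}_{(\ring{R},\tau)}\hookrightarrow\mathbf{TopMod}_{(\ring{R},\tau)}$ are, by definition, closed under the corresponding isomorphisms, so it suffices to verify the claim on objects; the action on morphisms inherits automatically from $Alg_{(\ring{R},\tau)}$, since a full subcategory inclusion is fully faithful.

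First I would verify the factorization. Let $M$ be a free $\ring{R}$-module and pick any basis $B$ of $M$. By Lemma~\ref{lem:dual_of_free_is_top_free_mod}, one has $(M^*,w^*_{(\ring{R},\tau)})\simeq (R,\tau)^B$ in $\mathbf{TopMod}_{(\ring{R},\tau)}$. By the very definition of $\mathbf{TopFreeMod}_{(\ring{R},\tau)}$, the module on the right is a topologically-free $(\ring{R},\tau)$-module, hence so is the module on the left. Thus $Alg_{(\ring{R},\tau)}(M)$ lives in $\mathbf{TopFreeMod}_{(\ring{R},\tau)}$ as soon as $M$ is free, which establishes the asserted factorization (and the analogous one for fields).

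Next I would prove essential surjectivity of the co-restriction. Let $(N,\gamma)\in\mathbf{TopFreeMod}_{(\ring{R},\tau)}$. By definition, there exists a set $X$ and an isomorphism $(N,\gamma)\simeq (R,\tau)^X$ in $\mathbf{TopMod}_{(\ring{R},\tau)}$. Now by Lemma~\ref{lem:rho_X_is_iso} the map $\rho_X$ exhibits an isomorphism $(R,\tau)^X\simeq Alg_{(\ring{R},\tau)}(R^{(X)})$, and since $R^{(X)}$ is the free $\ring{R}$-module on $X$ (cf.\ Remark~\ref{rem:categorytheoretic_recasting_of_free_modules}), it is an object of $\mathbf{FreeMod}_{\ring{R}}$. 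Composing the two isomorphisms yields $(N,\gamma)\simeq Alg_{(\ring{R},\tau)}(R^{(X)})$, which is precisely essential surjectivity.

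There is essentially no serious obstacle here: both lemmas do the real work, and the only point requiring a bit of attention is the bookkeeping of which category each object or morphism lives in, and the observation that being ``essentially surjective'' only requires isomorphism in the codomain category, so that passing through $(R,\tau)^X$ as an intermediate object is legitimate. The statement is therefore a direct corollary once the two preceding lemmas are in place.
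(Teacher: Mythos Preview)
Your proof is correct and essentially identical to the paper's: for the factorization the paper invokes Lemma~\ref{lem:dual_top_basis} (whose content is the same as your appeal to Lemma~\ref{lem:dual_of_free_is_top_free_mod}, since the former is derived from the latter), and for essential surjectivity the paper likewise reduces to Lemma~\ref{lem:rho_X_is_iso} via $(M,\sigma)\simeq (R,\tau)^X\simeq Alg_{(\ring{R},\tau)}(R^{(X)})$.
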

\begin{proof}
The first assertion is merely Lemma~\ref{lem:dual_top_basis}. Regarding the second assertion, let $(M,\sigma)$ be a topologically-free module. So, for some set $X$, $(M,\sigma)\simeq (R,\tau)^X$. By Lemma~\ref{lem:rho_X_is_iso}, $(R,\tau)^X\simeq Alg_{(\ring{R},\tau)}(R^{(X)})$.
\end{proof}



\begin{lemma}\label{lem:dual_lin_basis}
Let $(\ring{R},\tau)$ be a rigid ring. Let $(M,\sigma)$ be a topologically-free $(\ring{R},\tau)$-module with topological basis $B$. Then, $(M,\sigma)'$ is free with basis $B':=\{\, b'\colon b\in B\,\}$.
\end{lemma}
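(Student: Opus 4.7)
The strategy is to transport the rigidity condition, which is phrased for the canonical topologically-free modules $(R,\tau)^X$, back to $(M,\sigma)$ along any chosen topological isomorphism.

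First I would fix such an isomorphism. Since $(M,\sigma)$ is topologically-free with topological basis $B$, and since by Example~\ref{ex:top_basis_of_R_to_X} the set $\{\,\delta_b\colon b\in B\,\}$ is a topological basis of $(R,\tau)^B$, Lemma~\ref{lem:top_iso_from_top_basis} gives a (unique) isomorphism
\[
\Theta\colon (R,\tau)^B\xrightarrow{\simeq}(M,\sigma)
\]
in $\mathbf{TopMod}_{(\ring{R},\tau)}$ sending $\delta_b\mapsto b$ for every $b\in B$. By Lemma~\ref{lem:change_of_top_basis} applied to $\Theta$, the transported topological coefficient maps coincide with the original ones: since $\pi_b=\delta_b'$ (Example~\ref{ex:top_basis_of_R_to_X}), one gets $b'=\pi_b\circ\Theta^{-1}$ for each $b\in B$.

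Next I would apply the topological dual functor $Top_{(\ring{R},\tau)}$, which sends isomorphisms to isomorphisms, to obtain an $\ring{R}$-linear isomorphism
\[
\Theta'\colon (M,\sigma)'\xrightarrow{\simeq}((R,\tau)^B)',\qquad \ell\mapsto\ell\circ\Theta.
\]
By the previous identity, $\Theta'(b')=b'\circ\Theta=\pi_b$ for every $b\in B$. So everything reduces to showing that $\{\,\pi_b\colon b\in B\,\}$ is an $\ring{R}$-basis of $((R,\tau)^B)'$.

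This is precisely where rigidity enters: by hypothesis, $\lambda_B\colon R^{(B)}\to ((R,\tau)^B)'$ is an isomorphism of $\ring{R}$-modules, and the direct formula $\lambda_B(\delta_b)(f)=\sum_{x\in B}\delta_b(x)f(x)=f(b)=\pi_b(f)$ shows $\lambda_B(\delta_b)=\pi_b$. Since $\{\,\delta_b\colon b\in B\,\}$ is a basis of the free module $R^{(B)}$, its image $\{\,\pi_b\colon b\in B\,\}$ is a basis of $((R,\tau)^B)'$. Transporting along the isomorphism $(\Theta')^{-1}$, which sends $\pi_b$ back to $b'$, one concludes that $B'=\{\,b'\colon b\in B\,\}$ is an $\ring{R}$-basis of $(M,\sigma)'$.

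No step is really an obstacle; the whole proof is essentially an unwinding of definitions once rigidity is read as ``$\lambda_B$ is an iso.'' The only point requiring mild vigilance is the bookkeeping in step one—verifying that $b'=\pi_b\circ\Theta^{-1}$—but this is exactly the content of Lemma~\ref{lem:change_of_top_basis} combined with Remark~\ref{rem:top_coef_maps_as_delta}.
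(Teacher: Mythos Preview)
Your proof is correct and follows essentially the same route as the paper: fix the canonical isomorphism between $(M,\sigma)$ and $(R,\tau)^B$ determined by $b\leftrightarrow\delta_b$, use rigidity to identify $((R,\tau)^B)'$ with $R^{(B)}$ via $\lambda_B$ (so that $\{\pi_b\}$ is a basis), and transport back to conclude that $\{b'\}$ is a basis of $(M,\sigma)'$. The only cosmetic difference is the direction in which you write the isomorphism $\Theta$, which is immaterial.
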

\begin{proof}
Let $\Theta_B\colon (M,\sigma)\simeq (R,\tau)^B$ be given by $\Theta_B(b)=\delta_b$. Therefore $\Theta_B'\colon ((R,\tau)^B)'\simeq (M,\sigma)'$, and thus one has an isomorphism $\Theta_B'\circ\lambda_B\colon R^{(B)}\simeq (M,\sigma)'$. Since a module isomorphic to a free module is free, $(M,\sigma)'$ is free. The previous isomorphism acts as:  
$\Theta_B'(\lambda_B(\delta_b))=\pi_b\circ\Theta_B=b'$ for $b\in B$. It follows from Lemma~\ref{lem:change_of_top_basis} that $B'$
 is a basis of $(M,\sigma)'$. 
 \end{proof}
 
\begin{example}
Let $(\ring{R},\tau)$ be a rigid ring. Let $(M,\sigma)=(R,\tau)^X$. By Example~\ref{ex:top_basis_of_R_to_X},  $\{\, \delta_x'\colon x\in X\,\}=\{\, \pi_x\colon x\in X\,\}$ is a linear basis of $((R,\tau)^X)'$.
\end{example}

\begin{corollary}\label{cor:top_dual_of_a_top_free_module}
Let $(\ring{R},\tau)$ be a rigid ring. The functor 
$\mathbf{TopMod}_{(\ring{R},\tau)}^{\mathsf{op}}\xrightarrow{Top_{(\ring{R},\tau)}}\mathbf{Mod}_{\ring{R}}$ factors\footnote{Correspondingly for a field $(\mathbb{k},\tau)$ with a ring topology,
\begin{equation}
\xymatrix@R=1pc{
\mathbf{TopVect}_{(\mathbb{k},\tau)}^{\mathsf{op}}\ar[r]^-{Top_{(\mathbb{k},\tau)}}&\mathbf{Vect}_{\mathbb{k}}\\
\ar@{^{(}->}[u]\mathbf{TopFreeVect}_{(\mathbb{k},\tau)}^{\mathsf{op}} \ar[ru]&
}
\end{equation}} as indicated by the diagram below.
\begin{equation}
\xymatrix@R=1pc{
\mathbf{TopMod}_{(\ring{R},\tau)}^{\mathsf{op}}\ar[r]^-{Top_{(\ring{R},\tau)}}&\mathbf{Mod}_{\ring{R}}\\
\ar@{^{(}->}[u]\mathbf{TopFreeMod}_{(\ring{R},\tau)}^{\mathsf{op}} \ar[r]& \mathbf{FreeMod}_{\ring{R}}\ar@{^{(}->}[u]
}
\end{equation}
\end{corollary}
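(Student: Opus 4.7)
The plan is to reduce the corollary to the already-established Lemma~\ref{lem:dual_lin_basis}, and then verify that the diagrammatic factorization is well-defined on morphisms as well as on objects.

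First, I would observe that since $\mathbf{TopFreeMod}_{(\ring{R},\tau)}$ (resp.\ $\mathbf{FreeMod}_{\ring{R}}$) is by definition a \emph{full} subcategory of $\mathbf{TopMod}_{(\ring{R},\tau)}$ (resp.\ $\mathbf{Mod}_{\ring{R}}$), a functor $\mathbf{TopMod}_{(\ring{R},\tau)}^{\mathsf{op}}\xrightarrow{Top_{(\ring{R},\tau)}}\mathbf{Mod}_{\ring{R}}$ restricts to a functor $\mathbf{TopFreeMod}_{(\ring{R},\tau)}^{\mathsf{op}}\to \mathbf{FreeMod}_{\ring{R}}$ as soon as it sends every topologically-free $(\ring{R},\tau)$-module to a free $\ring{R}$-module. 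Indeed, morphisms are then automatically handled: for any continuous $(\ring{R},\tau)$-linear map $(M,\sigma)\xrightarrow{f}(N,\gamma)$ between topologically-free modules, the arrow $Top_{(\ring{R},\tau)}(f) = f'$ is a well-defined $\ring{R}$-linear map $(N,\gamma)'\to (M,\sigma)'$ in $\mathbf{Mod}_{\ring{R}}$, and since both $(N,\gamma)'$ and $(M,\sigma)'$ will have been shown to be free, $f'$ is automatically a morphism in $\mathbf{FreeMod}_{\ring{R}}$.

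Hence the entire content of the statement is the object part: if $(M,\sigma)$ is a topologically-free $(\ring{R},\tau)$-module, then its topological dual $(M,\sigma)'$ is a free $\ring{R}$-module. But this is precisely the conclusion of Lemma~\ref{lem:dual_lin_basis}: choose any topological basis $B$ of $(M,\sigma)$ (which exists by Corollary~\ref{cor:top_basis}), and then $B':=\{\, b'\colon b\in B\,\}$ is an $\ring{R}$-linear basis of $(M,\sigma)'$. In particular $(M,\sigma)'$ belongs to $\mathbf{FreeMod}_{\ring{R}}$, which yields the desired factorization.

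There is no real obstacle here, since the hard work (namely the use of rigidity to produce, via $\lambda_B$, the isomorphism $R^{(B)}\simeq ((R,\tau)^B)'$ and, by transport along an isomorphism $(M,\sigma)\simeq (R,\tau)^B$, an isomorphism $R^{(B)}\simeq (M,\sigma)'$) has already been carried out in Lemma~\ref{lem:dual_lin_basis}. The only point worth flagging is that rigidity is \emph{essential} at this step: without it, $\lambda_B$ need not be surjective, and $(M,\sigma)'$ could fail to be free, so the factorization of the corollary would not be available. Everything else is purely formal.
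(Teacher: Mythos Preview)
Your proposal is correct and follows exactly the paper's intended approach: the corollary is stated immediately after Lemma~\ref{lem:dual_lin_basis} with no separate proof, precisely because the object-level statement is that lemma and the morphism-level statement is automatic by fullness of the subcategories. Your write-up simply makes this explicit.
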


\paragraph{The topological dual of the algebraic dual of a free module.}

Let $(\ring{R},\tau)$ be a topological ring. 
Let $M$ be a $\ring{R}$-module, and let us consider as in Section~\ref{subsec:algebraic_dual_functor}, the $\ring{R}$-linear map
$$M\xrightarrow{\Lambda_M}(M^*,w^*_{(\ring{R},\tau)})'$$
$(\Lambda_M(v))(\ell)=\ell(v)$, $v\in M$, $\ell\in M^*$. 


\begin{lemma}
Let  $M$ be a projective $\ring{R}$-module. Then,  $\Lambda_M$ is one-to-one. This holds in particular when $M$ is a free $\ring{R}$-module.
\end{lemma}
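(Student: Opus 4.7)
The injectivity of $\Lambda_M$ amounts to showing that for every nonzero $v\in M$ there is some $\ell\in M^*$ with $\ell(v)\neq 0$ (note that $\Lambda_M(v)$ already lies in $(M^*,w^*_{(\ring{R},\tau)})'$ by the very definition of the weak-$*$ topology as the initial topology induced by the family $(\Lambda_M(v))_{v\in M}$). The plan is to establish this first for free modules and then reduce the projective case to the free one by a direct-summand argument.

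For the free case, fix a basis $B$ of $M$ and consider the coefficient maps $(b^*)_{b\in B}$ introduced in Section~\ref{subsec:algebraic_dual_functor}, so that every $v\in M$ has the unique finite decomposition $v=\sum_{b\in B}b^*(v)b$. If $v\neq 0$, then $b^*(v)\neq 0$ for some $b\in B$. Since $b^*\in M^*$, we have $(\Lambda_M(v))(b^*)=b^*(v)\neq 0$, and hence $\Lambda_M(v)\neq 0$. This already proves the ``in particular'' part of the statement.

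For the projective case, recall that $M$ is projective precisely when it is a direct summand of a free $\ring{R}$-module; pick such a decomposition $F=M\oplus N$ with associated inclusion $i\colon M\hookrightarrow F$ (so $i$ is one-to-one and $R$-linear). Given $v\in M$ with $v\neq 0$, the element $i(v)\in F$ is nonzero, so by the free case already treated there exists $L\in F^*$ with $L(i(v))\neq 0$. The pullback $\ell:=L\circ i$ then lies in $M^*$ and satisfies $\ell(v)=L(i(v))\neq 0$, so $\Lambda_M(v)\neq 0$ as required.

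The whole argument is essentially formal: the free case is a tautology once a basis is at hand, and the projective case reduces to it by pulling back linear functionals along the inclusion into a free hull (the retraction coming with a direct-summand splitting is not even needed here, only the injectivity of $i$). There is no real obstacle; the only subtlety is simply to remember that the target of $\Lambda_M$ is the topological dual, which is automatic from how $w^*_{(\ring{R},\tau)}$ was set up.
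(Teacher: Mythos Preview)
Your proof is correct, but the route differs from the paper's. The paper treats the projective case directly, without first isolating the free case: it invokes the \emph{dual basis lemma} (\cite[p.~23]{Lam}), i.e., the existence of families $B\subseteq M$ and $\{\ell_e\colon e\in B\}\subseteq M^*$ with $v=\sum_{e\in B}\ell_e(v)e$ (finitely many nonzero terms) for all $v\in M$. Then $v\in\ker\Lambda_M$ forces $\ell_e(v)=0$ for every $e$, hence $v=0$. Your argument instead uses the direct-summand characterization of projectivity and pulls back a separating functional from a free overmodule. Both approaches are standard; the paper's is a one-step argument that handles free and projective uniformly (a basis being a dual basis), while yours is perhaps more transparent for readers unfamiliar with the dual basis formulation, at the cost of splitting into two cases.
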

\begin{proof}
Let us consider a dual basis for $M$, i.e., sets $B\subseteq M$ and $\{\, \ell_{e}\colon e\in B\,\}\subseteq M^*$,  such that for all $v\in M$, $\ell_e(v)=0$ for all but finitely many $\ell_e\in B^*$ and $v=\sum_{e\in B}\ell_e(v)e$ (\cite[p.~23]{Lam}). Let $v\in \ker \Lambda_M$, i.e., $(\Lambda_M(v))(\ell)=\ell(v)=0$ for each $\ell\in M^*$. Then, in particular, $\Lambda_M(v)(\ell_e)=\ell_e(v)=0$ for all $e\in B$, and thus $v=0$. 
\end{proof}

Let $(\ring{R},\tau)$ (resp. $(\mathbb{k},\tau)$) be a rigid ring (resp. field). Let us still denote by $\mathbf{FreeMod}_{\ring{R}}^{\mathsf{op}}\xrightarrow{Alg_{(\ring{R},\tau)}}\mathbf{TopFreeMod}_{(\ring{R},\tau)}$ (resp. $\mathbf{Vect}_{\mathbb{k}}^{\mathsf{op}}\xrightarrow{Alg_{(\mathbb{k},\tau)}}\mathbf{TopFreeVect}_{(\mathbb{k},\tau)}$) and by $\mathbf{TopFreeMod}_{(\ring{R},\tau)}^{\mathsf{op}}\xrightarrow{Top_{(\ring{R},\tau)}}\mathbf{FreeMod}_{\ring{R}}$ (resp. $\mathbf{TopFreeVect}_{(\mathbb{k},\tau)}^{\mathsf{op}}\xrightarrow{Top_{(\mathbb{k},\tau)}}\mathbf{Vect}_{\mathbb{k}}$) the functors provided by Corollaries~\ref{cor:free_to_top_free_mod} and~\ref{cor:top_dual_of_a_top_free_module}.  
\begin{proposition}\label{cor:first_nat_iso}
Let us assume that $(\ring{R},\tau)$ is rigid. 
$\Lambda:=(\Lambda_M)_M\colon id\Rightarrow Top_{(\ring{R},\tau)}\circ Alg_{(\ring{R},\tau)}^{\mathsf{op}}\colon \mathbf{FreeMod}_{\ring{R}}\to\mathbf{FreeMod}_{\ring{R}}$ is a natural isomorphism\footnote{A {\em natural isomorphism}  $\alpha\colon F\Rightarrow G\colon \mathbf{C}\to\mathbf{D}$ is a natural transformation the components of which are isomorphisms in $\mathbf{D}$. Two functors $F,G\colon \mathbf{C}\to\mathbf{D}$ are said to be {\em naturally isomorphic}, which is denoted $F\simeq G$, if there is a natural isomorphism $\alpha\colon F\Rightarrow G\colon \mathbf{C}\to\mathbf{D}$.}.
\end{proposition}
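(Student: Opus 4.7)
The plan is to split the statement into (i) naturality of $\Lambda$ on $\mathbf{FreeMod}_{\ring{R}}$ and (ii) the assertion that each component $\Lambda_M$ is a $\ring{R}$-linear isomorphism. Part (i) is a direct computation; for part (ii) I will use a ``basis-to-basis'' argument built on Lemmas~\ref{lem:dual_top_basis} and~\ref{lem:dual_lin_basis}, which avoids any appeal to the injectivity lemma established earlier (though it is consistent with it).

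For naturality, given $f\colon M\to N$ in $\mathbf{FreeMod}_{\ring{R}}$, one must check that $\Lambda_N\circ f=(f^{*})'\circ\Lambda_M$ as $\ring{R}$-linear maps $M\to (N^{*},w^{*}_{(\ring{R},\tau)})'$. Unwinding the definitions at $v\in M$ and $\ell\in N^{*}$ yields
\begin{equation*}
((f^{*})'(\Lambda_M(v)))(\ell)=(\Lambda_M(v))(\ell\circ f)=\ell(f(v))=(\Lambda_N(f(v)))(\ell),
\end{equation*}
so naturality is immediate.

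For the isomorphism claim, fix a basis $B$ of $M$. By Lemma~\ref{lem:dual_top_basis}, $B^{*}=\{\,b^{*}\colon b\in B\,\}$ is a topological basis of $(M^{*},w^{*}_{(\ring{R},\tau)})$, and then by Lemma~\ref{lem:dual_lin_basis} (applied to this topologically-free module, which uses rigidity of $(\ring{R},\tau)$), the topological dual $(M^{*},w^{*}_{(\ring{R},\tau)})'$ is a free $\ring{R}$-module with linear basis $(B^{*})'=\{\,(b^{*})'\colon b\in B\,\}$. It therefore suffices to show that $\Lambda_M$ maps $B$ bijectively onto $(B^{*})'$, namely that $\Lambda_M(b)=(b^{*})'$ for every $b\in B$: any $\ring{R}$-linear map between free modules sending a basis bijectively onto a basis is an isomorphism. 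To verify this identification, fix $b\in B$ and $\ell\in M^{*}$ and expand $\ell=\sum_{d\in B}\beta_d\,d^{*}$ in the topological basis $B^{*}$ of $(M^{*},w^{*}_{(\ring{R},\tau)})$; evaluating at an arbitrary $v\in M$ via the continuous linear functional $\Lambda_M(v)\in (M^{*},w^{*}_{(\ring{R},\tau)})'$ gives $\ell(v)=\sum_{d\in B}\beta_d\,d^{*}(v)$, and taking $v=b$ together with Remark~\ref{rem:coefficient_maps} identifies $\beta_b=\ell(b)$. Hence $(b^{*})'(\ell)=\beta_b=\ell(b)=(\Lambda_M(b))(\ell)$, as required.

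The main obstacle is the commutation step in the last paragraph, i.e., applying the continuous linear evaluation $\Lambda_M(v)$ inside the summable family expressing $\ell$ in the topological basis $B^{*}$; this rests on the standard fact that continuous linear maps preserve summability in Hausdorff topological modules, together with the very definition of the weak-$*$ topology. Once this is granted everything else, including the fact that $\Lambda_M$ takes values in $(M^{*},w^{*}_{(\ring{R},\tau)})'$, is immediate from the hypotheses and the preceding lemmas.
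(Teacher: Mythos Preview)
Your argument is correct. Naturality is handled exactly as in the paper (``clear'' there, one line here), and your basis-to-basis step is sound: Lemma~\ref{lem:dual_top_basis} gives $B^{*}$ as a topological basis of $(M^{*},w^{*}_{(\ring{R},\tau)})$, Lemma~\ref{lem:dual_lin_basis} (which uses rigidity) gives $(B^{*})'$ as a linear basis of its topological dual, and your verification that $\Lambda_M(b)=(b^{*})'$ via termwise evaluation along $\Lambda_M(b)$ is legitimate because $\Lambda_M(b)$ is continuous for $w^{*}_{(\ring{R},\tau)}$ by definition of that topology.

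The paper takes a different, more extrinsic route: it fixes a basis $X$ of $M$, writes down the commutative square relating $\Lambda_M$ to $\lambda_X$ through the explicit isomorphisms $\theta_X$, $\rho_X$ and their transposes, and concludes directly from rigidity ($\lambda_X$ invertible) that $\Lambda_M$ is an isomorphism. In effect the paper re-derives inside the proof what you instead import from Lemma~\ref{lem:dual_lin_basis}. Your approach is a bit more intrinsic and economical once those lemmas are available, and it makes the action of $\Lambda_M$ on bases transparent; the paper's diagram chase, on the other hand, does not rely on Lemma~\ref{lem:dual_lin_basis} and exhibits the isomorphism as a composite of already-named maps, which is convenient later when these identifications are reused.
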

\begin{proof}
Naturality is clear. Let $(\ring{R},\tau)$ be a topological ring. Let $M$ be a free $\ring{R}$-module. For each free basis $X$ of $M$,  the following diagram commutes in $\mathbf{Mod}_{\ring{R}}$, where $M\xrightarrow{\theta_X}R^{(X)}$\index{$\theta_X$} is the canonical isomorphism given by $\theta_X(x)=\delta_x^{\ring{R}}$, $x\in X$. Consequently, when $(\ring{R},\tau)$ is rigid, then for each free $\ring{R}$-module $M$, $M\xrightarrow{\Lambda_M}(M^*,w^*_{(\ring{R},\tau)})'$ is an isomorphism. 
\begin{equation}\label{diag:lienentreleslambdas}
\xymatrix@R=0.5pc{
M\ar[r]^-{\Lambda_M} & (M^*,w_{(\ring{R},\tau)}^*)' \eq[rd]^{(\theta_X^*)'}& \\
&&((R^{(X)})^*,w^*_{(\ring{R},\tau)})'\\
\eq[uu]^{\theta_X}R^{(X)} \ar[r]_-{\lambda_X}& ((R,\tau)^X)'\eq[ru]_{\rho_X'}&
}
\end{equation}
\end{proof}
\begin{corollary}\label{cor:first_nat_iso-for-fields}
Let us assume that $(\mathbb{k},\tau)$ is a field with a ring topology. Then, 
$\Lambda=(\Lambda_M)_M\colon id\Rightarrow Top_{(\mathbb{k},\tau)}\circ Alg_{(\mathbb{k},\tau)}^{\mathsf{op}}\colon \mathbf{Vect}_{\mathbb{k}}\to\mathbf{Vect}_{\mathbb{k}}$ is a natural isomorphism.
\end{corollary}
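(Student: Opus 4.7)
The plan is to obtain this as a direct corollary of Proposition~\ref{cor:first_nat_iso} by observing that the hypotheses of that proposition are automatically met in the field case. Two facts suffice: first, by Lemma~\ref{lem:rigidity_of_field_with_ring_topology}, any field $(\mathbb{k},\tau)$ with a ring topology is rigid, so the rigidity assumption of Proposition~\ref{cor:first_nat_iso} is fulfilled; second, every $\mathbb{k}$-vector space admits a basis, so $\mathbf{Vect}_{\mathbb{k}} = \mathbf{FreeMod}_{\mathbb{k}}$ as full subcategories of $\mathbf{Mod}_{\mathbb{k}}$, as already noted in the text just before Example~\ref{ex:p_x}.

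First I would spell out the identification of the relevant functors. Corollary~\ref{cor:free_to_top_free_mod} provides the co-restriction $Alg_{(\mathbb{k},\tau)}\colon \mathbf{Vect}_{\mathbb{k}}^{\mathsf{op}} \to \mathbf{TopFreeVect}_{(\mathbb{k},\tau)}$ (its footnote formulation), and Corollary~\ref{cor:top_dual_of_a_top_free_module} provides $Top_{(\mathbb{k},\tau)}\colon \mathbf{TopFreeVect}_{(\mathbb{k},\tau)}^{\mathsf{op}} \to \mathbf{Vect}_{\mathbb{k}}$. Under the identification $\mathbf{Vect}_{\mathbb{k}} = \mathbf{FreeMod}_{\mathbb{k}}$, these are exactly the functors appearing in Proposition~\ref{cor:first_nat_iso} applied to the topological ring $(\mathbb{k},\tau)$.

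Then I would apply Proposition~\ref{cor:first_nat_iso} directly: its conclusion yields a natural isomorphism
\[
\Lambda\colon id \Rightarrow Top_{(\mathbb{k},\tau)} \circ Alg_{(\mathbb{k},\tau)}^{\mathsf{op}}\colon \mathbf{FreeMod}_{\mathbb{k}} \to \mathbf{FreeMod}_{\mathbb{k}},
\]
whose component at an arbitrary free $\mathbb{k}$-module $M$ is the evaluation map $\Lambda_M$. Since $\mathbf{FreeMod}_{\mathbb{k}} = \mathbf{Vect}_{\mathbb{k}}$, this is precisely the statement to be proved.

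There is essentially no obstacle: the only subtlety to guard against is a notational one, namely checking that the evaluation map $\Lambda_M$ of Proposition~\ref{cor:first_nat_iso} and the commutative diagram~(\ref{diag:lienentreleslambdas}) remain valid verbatim when the ground ring $\ring{R}$ is specialized to a field $\mathbb{k}$; this is immediate since the argument in Proposition~\ref{cor:first_nat_iso} only uses that $M$ is free (which every vector space is) and that $(\ring{R},\tau)$ is rigid (which the field hypothesis supplies through Lemma~\ref{lem:rigidity_of_field_with_ring_topology}).
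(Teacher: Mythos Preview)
Your proposal is correct and matches the paper's approach: the paper states this corollary without proof, treating it as immediate from Proposition~\ref{cor:first_nat_iso} together with the observations that a field with a ring topology is rigid (Lemma~\ref{lem:rigidity_of_field_with_ring_topology}) and that $\mathbf{Vect}_{\mathbb{k}} = \mathbf{FreeMod}_{\mathbb{k}}$. Your write-up simply makes these implicit steps explicit.
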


\paragraph{The algebraic dual of the topological dual of a topologically-free module.}

Let $(M,\sigma)$ be a topological $(\ring{R},\tau)$-module. Let us consider the $\ring{R}$-linear map $M\xrightarrow{\Gamma_{(M,\sigma)}}((M,\sigma)')^*$\index{$\Gamma_{(M,\sigma)}$} by setting $(\Gamma_{(M,\sigma)}(v))(\ell):=\ell(v)$.


\begin{proposition}\label{lem:second_nat_iso}
Let us assume that $(\ring{R},\tau)$ is a rigid ring. Then, $\Gamma\colon id\Rightarrow Alg_{(\ring{R},\tau)}\circ Top^{\mathsf{op}}_{(\ring{R},\tau)}\colon \mathbf{TopFreeMod}_{(\ring{R},\tau)}\to \mathbf{TopFreeMod}_{(\ring{R},\tau)}$ is a natural isomorphism, with $\Gamma:=(\Gamma_{(M,\sigma)})_{(M,\sigma)}$.
\end{proposition}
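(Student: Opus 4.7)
The plan is to prove Proposition~\ref{lem:second_nat_iso} in direct analogy with Proposition~\ref{cor:first_nat_iso}. First, the naturality square is a one-line computation: for any continuous $\ring{R}$-linear map $(M,\sigma)\xrightarrow{f}(N,\gamma)$, the functor $Alg_{(\ring{R},\tau)}\circ Top^{\mathsf{op}}_{(\ring{R},\tau)}$ sends $f$ to $(f')^*$, and given $v\in M$, $\ell\in(N,\gamma)'$ one has
$$(f')^*(\Gamma_{(M,\sigma)}(v))(\ell)=\Gamma_{(M,\sigma)}(v)(\ell\circ f)=\ell(f(v))=\Gamma_{(N,\gamma)}(f(v))(\ell),$$
so $\Gamma$ is indeed a natural transformation.

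For the componentwise isomorphism assertion, my strategy is to reduce to the special case $(M,\sigma)=(R,\tau)^B$, for which $\Gamma_{(R,\tau)^B}$ will be exhibited as a composite of two already-established isomorphisms. Pick a topological basis $B$ of $(M,\sigma)$ (Corollary~\ref{cor:top_basis}), obtaining by Lemma~\ref{lem:top_iso_from_top_basis} an isomorphism $\Theta_B\colon(M,\sigma)\simeq(R,\tau)^B$ in $\mathbf{TopFreeMod}_{(\ring{R},\tau)}$. Naturality of $\Gamma$ applied to $\Theta_B$ gives a commutative square whose vertical arrows $\Theta_B$ and $(\Theta_B')^*$ are both isomorphisms in $\mathbf{TopFreeMod}_{(\ring{R},\tau)}$, so it suffices to treat $(R,\tau)^B$.

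For that case, I claim $\lambda_B^*\circ\Gamma_{(R,\tau)^B}=\rho_B$. Indeed, for $f\in R^B$ and $p\in R^{(B)}$,
$$\lambda_B^*(\Gamma_{(R,\tau)^B}(f))(p)=\Gamma_{(R,\tau)^B}(f)(\lambda_B(p))=\lambda_B(p)(f)=\sum_{x\in B}p(x)f(x)=\rho_B(f)(p).$$
Since $(\ring{R},\tau)$ is rigid, $\lambda_B$ is an isomorphism in $\mathbf{Mod}_{\ring{R}}$ (Definition~\ref{def:rigidity}), hence $\lambda_B^*$ is an isomorphism in $\mathbf{TopFreeMod}_{(\ring{R},\tau)}$ by functoriality of $Alg_{(\ring{R},\tau)}$ (Corollary~\ref{cor:free_to_top_free_mod}); combined with the isomorphism $\rho_B$ of Lemma~\ref{lem:rho_X_is_iso}, this shows $\Gamma_{(R,\tau)^B}=(\lambda_B^*)^{-1}\circ\rho_B$ is an isomorphism in $\mathbf{TopFreeMod}_{(\ring{R},\tau)}$, and then so is $\Gamma_{(M,\sigma)}$.

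The only real obstacle is bookkeeping: one must be careful that the composite functor $Alg_{(\ring{R},\tau)}\circ Top^{\mathsf{op}}_{(\ring{R},\tau)}$ really sends a morphism $f$ to $(f')^*$ with the correct variance, and that working inside $\mathbf{TopFreeMod}_{(\ring{R},\tau)}$ guarantees the inverse $(\lambda_B^*)^{-1}$ is automatically continuous (which it is, precisely because $\lambda_B^*$ is an isomorphism in that category). Everything else reduces to the rigidity hypothesis and the already-established isomorphisms $\rho_X$ and $\lambda_X$.
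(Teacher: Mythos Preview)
Your proof is correct and follows essentially the same approach as the paper's. The identity $\lambda_B^*\circ\Gamma_{(R,\tau)^B}=\rho_B$ you verify is precisely the content of the paper's commutative diagram in the special case $\Theta=id$, and your reduction to that case via the naturality square is exactly what the paper's diagram does for general $\Theta$; the only organizational difference is that the paper first checks continuity of $\Gamma_{(M,\sigma)}$ directly (by identifying the weak-$*$ topology on $((M,\sigma)')^*$ with an initial topology) before invoking the diagram, whereas you obtain continuity as a byproduct of the factorization $\Gamma_{(M,\sigma)}=((\Theta_B')^*)^{-1}\circ\Gamma_{(R,\tau)^B}\circ\Theta_B$.
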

\begin{proof}
Naturality is clear. Let $\Theta\colon (M,\sigma)\simeq (R,\tau)^X$ be an isomorphism (in $\mathbf{TopMod}_{(\ring{R},\tau)}$). Since $(\ring{R},\tau)$ is rigid, $\lambda_X\colon R^{(X)}\simeq ((R,\tau)^X)'$ is an isomorphism. Therefore $R^{(X)}\xrightarrow{\lambda_X}((R,\tau)^X)'\xrightarrow{\Theta'}(M,\sigma)'$ is an isomorphism too in $\mathbf{Mod}_R$. In particular, $(M,\sigma)'$ is a free with basis $\{\, \Theta'(\lambda_X(\delta_x^{\ring{R}}))\colon x\in X\,\}$. By Lemma~\ref{lem:dual_of_free_is_top_free_mod}, the weak-$*$ topology on $((M,\sigma)')^*$ is the same as the initial topology given by the maps $((M,\sigma)')^*\xrightarrow{\Lambda_{(M,\sigma)'}(\pi_x\circ \Theta)}(R,\tau)$, $x\in X$, because $\Theta'(\lambda_X(\delta_x^{\ring{R}}))=\Theta'(\pi_x)=\pi_x\circ\Theta$. Therefore, $\Gamma_{(M,\sigma)}$ is continuous if, and only if, for each $x\in X$, $\Lambda_{(M,\sigma)'}(\pi_x\circ \Theta)\circ \Gamma_{(M,\sigma)}=\pi_x\circ\Theta$ is continuous.  Continuity of $\Gamma_{(M,\sigma)}$ thus is proved. 

That $\Gamma_{(M,\sigma)}$ is an isomorphism in $\mathbf{TopMod}_{(R,\tau)}$   follows from the commutativity of the diagram (in $\mathbf{TopMod}_{(R,\tau)}$) below (which may be checked by hand). 
\begin{equation}
\xymatrix@R=0.1pc{
&\ar[ld]_{\Theta}(M,\sigma) \ar[r]^{\Gamma_{(M,\sigma)}}& (((M,\sigma)')^*,w_{(\ring{R},\tau)}^*)\\
(R,\tau)^X\ar[rd]_{\rho_X} &&\\
&((R^{(X)})^*,w_{(\ring{R},\tau)}^*) \ar[r]_{(\lambda_X^{-1})^*}& ((((R,\tau)^X)')^*,w_{(\ring{R},\tau)}^*)\ar[uu]_{((\Theta')^*)^{-1}}
}
\end{equation}
%
\end{proof}

\begin{corollary}\label{cor:second_nat_iso}
Let us assume that $(\mathbb{k},\tau)$ is a field with a ring topology. $\Gamma\colon id\Rightarrow Alg_{(\mathbb{k},\tau)}\circ Top^{\mathsf{op}}_{(\mathbb{k},\tau)}\colon \mathbf{TopFreeVect}_{(\mathbb{k},\tau)}\to \mathbf{TopFreeVect}_{(\mathbb{k},\tau)}$ with $\Gamma:=(\Gamma_{(M,\sigma)})_{(M,\sigma)}$, is  a natural isomorphism.
\end{corollary}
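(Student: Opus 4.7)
The plan is to deduce this corollary as a direct specialization of Proposition~\ref{lem:second_nat_iso}. The key observation is that the hypothesis of Proposition~\ref{lem:second_nat_iso} (rigidity of the topological ring) is automatically satisfied here, and the categories involved coincide with those of the proposition in the field case.

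First, I would invoke Lemma~\ref{lem:rigidity_of_field_with_ring_topology}, which states that any field $(\mathbb{k},\tau)$ equipped with a ring topology is rigid. Thus $(\mathbb{k},\tau)$ may be plugged into Proposition~\ref{lem:second_nat_iso} as the rigid ring $(\ring{R},\tau)$. Second, I would note that since every $\mathbb{k}$-module is free, the category $\mathbf{TopFreeMod}_{(\mathbb{k},\tau)}$ is by definition the category $\mathbf{TopFreeVect}_{(\mathbb{k},\tau)}$, and similarly $Alg_{(\ring{R},\tau)}$ and $Top_{(\ring{R},\tau)}$ specialize to $Alg_{(\mathbb{k},\tau)}$ and $Top_{(\mathbb{k},\tau)}$ respectively. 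The assignment $(M,\sigma) \mapsto \Gamma_{(M,\sigma)}$ was defined for arbitrary topological $(\ring{R},\tau)$-modules, so it makes sense unchanged when $\ring{R} = \mathbb{k}$.

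Consequently, applying Proposition~\ref{lem:second_nat_iso} to the rigid ring $(\mathbb{k},\tau)$ immediately yields that $\Gamma = (\Gamma_{(M,\sigma)})_{(M,\sigma)}$ is a natural isomorphism from $id$ to $Alg_{(\mathbb{k},\tau)}\circ Top^{\mathsf{op}}_{(\mathbb{k},\tau)}$ on $\mathbf{TopFreeVect}_{(\mathbb{k},\tau)}$, which is the desired conclusion. There is no genuine obstacle here: all the substantive work, namely checking continuity of $\Gamma_{(M,\sigma)}$ via the factorization through $\rho_X$ and the identification of the weak-$*$ topology, has been carried out in the proof of Proposition~\ref{lem:second_nat_iso}, and none of those arguments required division in the ring beyond what rigidity already supplies. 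Thus the corollary reduces to the single observation that fields with a ring topology fall within the scope of the proposition.
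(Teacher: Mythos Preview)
Your proposal is correct and matches the paper's approach: the corollary is stated without proof because it follows immediately from Proposition~\ref{lem:second_nat_iso} once one knows (via Lemma~\ref{lem:rigidity_of_field_with_ring_topology}) that a field with a ring topology is rigid, and that $\mathbf{TopFreeMod}_{(\mathbb{k},\tau)} = \mathbf{TopFreeVect}_{(\mathbb{k},\tau)}$ in this case.
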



\paragraph{The equivalence and some of its immediate consequences.}

Collecting Proposition~\ref{cor:first_nat_iso} and Lemma~\ref{lem:second_nat_iso}, one immediately gets the following.
\begin{theorem}\label{thm:equiv_of_cats}
Let us assume that $(\ring{R},\tau)$ is rigid. $\mathbf{TopFreeMod}_{(\ring{R},\tau)}^{\mathsf{op}}\xrightarrow{Top_{(\ring{R},\tau)}}\mathbf{FreeMod}_{\ring{R}}$ is an equivalence of categories\footnote{\label{footnote:reflect_iso}$\mathbf{C}\simeq \mathbf{D}$ means that the categories $\mathbf{C}$ and $\mathbf{D}$ are {\em equivalent}, i.e., that there is an equivalence of categories $\mathbf{C}\xrightarrow{F}\mathbf{D}$. In this situation an {\em equivalent inverse} of $F$ is a functor $\mathbf{D}\xrightarrow{G}\mathbf{C}$ such that there are two natural isomorphisms 
$\eta\colon id_{\mathbf{C}}\Rightarrow G\circ F\colon \mathbf{C}\to\mathbf{C}$ and $\epsilon\colon F\circ G\Rightarrow id_{\mathbf{D}}\colon \mathbf{D}\to\mathbf{D}$.}, with equivalence inverse the functor $\mathbf{FreeMod}_{\ring{R}}\xrightarrow{Alg_{(\ring{R},\tau)}^{\mathsf{op}}}\mathbf{TopFreeMod}_{(\ring{R},\tau)}^{\mathsf{op}}$.
\end{theorem}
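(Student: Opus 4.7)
The plan is to verify the definition of an equivalence of categories, which requires exhibiting two natural isomorphisms: one between the identity on $\mathbf{FreeMod}_{\ring{R}}$ and the composite $Top_{(\ring{R},\tau)} \circ Alg_{(\ring{R},\tau)}^{\mathsf{op}}$, and one between the identity on $\mathbf{TopFreeMod}_{(\ring{R},\tau)}^{\mathsf{op}}$ and the composite $Alg_{(\ring{R},\tau)}^{\mathsf{op}} \circ Top_{(\ring{R},\tau)}$. As the paragraph just above the theorem advertises, the substantive work is already done by Proposition~\ref{cor:first_nat_iso} and Proposition~\ref{lem:second_nat_iso}; what remains is essentially bookkeeping with opposite categories.

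First, I would invoke Proposition~\ref{cor:first_nat_iso} directly: it supplies the natural isomorphism $\Lambda\colon id_{\mathbf{FreeMod}_{\ring{R}}} \Rightarrow Top_{(\ring{R},\tau)} \circ Alg_{(\ring{R},\tau)}^{\mathsf{op}}$, so the ``unit'' side of the candidate equivalence is immediate.

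Second, I would use Proposition~\ref{lem:second_nat_iso}, which gives a natural isomorphism $\Gamma\colon id_{\mathbf{TopFreeMod}_{(\ring{R},\tau)}} \Rightarrow Alg_{(\ring{R},\tau)} \circ Top^{\mathsf{op}}_{(\ring{R},\tau)}$ on $\mathbf{TopFreeMod}_{(\ring{R},\tau)}$ itself. To turn this into a natural isomorphism on the opposite category I would apply the opposite-transformation construction recalled in Section~\ref{sec:ex:x-fold_product_and_fin_supp_maps}: for any natural isomorphism $\alpha\colon F \Rightarrow G$, the transformation $\alpha^{\mathsf{op}}\colon G^{\mathsf{op}} \Rightarrow F^{\mathsf{op}}$ is again a natural isomorphism. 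Using the identifications $(Alg_{(\ring{R},\tau)} \circ Top^{\mathsf{op}}_{(\ring{R},\tau)})^{\mathsf{op}} = Alg_{(\ring{R},\tau)}^{\mathsf{op}} \circ Top_{(\ring{R},\tau)}$ and $id^{\mathsf{op}} = id$, passing to opposites yields $\Gamma^{\mathsf{op}}\colon Alg_{(\ring{R},\tau)}^{\mathsf{op}} \circ Top_{(\ring{R},\tau)} \Rightarrow id_{\mathbf{TopFreeMod}_{(\ring{R},\tau)}^{\mathsf{op}}}$.

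With the pair $(\Lambda,\Gamma^{\mathsf{op}})$ in hand, the functor $Alg_{(\ring{R},\tau)}^{\mathsf{op}}$ is an equivalence inverse of $Top_{(\ring{R},\tau)}$, and the theorem follows. There is no genuine obstacle here, since neither the triangle identities nor an adjunction has to be set up — the definition of equivalence only requires the two natural isomorphisms already produced. The only subtlety is to be careful about the direction of $\Gamma^{\mathsf{op}}$ after dualizing, so that it really points the right way on the opposite category; this is purely formal.
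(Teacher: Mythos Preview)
Your proposal is correct and follows exactly the paper's own approach: the paper's proof is a single sentence stating that the theorem is obtained by collecting Proposition~\ref{cor:first_nat_iso} and Proposition~\ref{lem:second_nat_iso}. Your additional bookkeeping about dualizing $\Gamma$ to get the natural isomorphism in the right direction on the opposite category is more explicit than what the paper writes, but it is the same argument.
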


\begin{corollary}\label{cor:equiv_of_cats_0}
$\mathbf{TopFreeVect}_{(\mathbb{k},\tau)}^{\mathsf{op}}\xrightarrow{Top_{(\mathbb{k},\tau)}}\mathbf{Vect}_{\mathbb{k}}$ is an equivalence of categories, and $\mathbf{Vect}_{\mathbb{k}}\xrightarrow{Alg_{(\mathbb{k},\tau)}^{\mathsf{op}}}\mathbf{TopFreeVect}_{(\mathbb{k},\tau)}^{\mathsf{op}}$ is its equivalence inverse, whenever $(\mathbb{k},\tau)$ is a field with a ring topology.
\end{corollary}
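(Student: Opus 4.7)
The corollary is an immediate specialization of Theorem~\ref{thm:equiv_of_cats} to the case where the scalar ring is a field, so my plan is simply to invoke the theorem and then identify $\mathbf{FreeMod}_{\mathbb{k}}$ with $\mathbf{Vect}_{\mathbb{k}}$.

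First, I would observe that, by Lemma~\ref{lem:rigidity_of_field_with_ring_topology}, every field $(\mathbb{k},\tau)$ with a ring topology is rigid, so the hypotheses of Theorem~\ref{thm:equiv_of_cats} are met with $(\ring{R},\tau)=(\mathbb{k},\tau)$. This yields an equivalence
\[
\mathbf{TopFreeMod}_{(\mathbb{k},\tau)}^{\mathsf{op}}\xrightarrow{Top_{(\mathbb{k},\tau)}}\mathbf{FreeMod}_{\mathbb{k}}
\]
with equivalence inverse $Alg_{(\mathbb{k},\tau)}^{\mathsf{op}}$, and in fact the unit and counit of this equivalence are the natural isomorphisms $\Lambda$ and $\Gamma$ of Proposition~\ref{cor:first_nat_iso} and Proposition~\ref{lem:second_nat_iso}, whose field-theoretic versions were already explicitly recorded in Corollary~\ref{cor:first_nat_iso-for-fields} and Corollary~\ref{cor:second_nat_iso}.

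Next I would recall that over a field every module is free (each vector space admits a basis), and so, as already noted in the excerpt just after the definition of $\mathbf{FreeMod}_{\ring{R}}$, one has the equality $\mathbf{FreeMod}_{\mathbb{k}}=\mathbf{Vect}_{\mathbb{k}}$ of categories. Similarly, by definition the full subcategory $\mathbf{TopFreeVect}_{(\mathbb{k},\tau)}$ of $\mathbf{TopVect}_{(\mathbb{k},\tau)}$ is the specialization of $\mathbf{TopFreeMod}_{(\ring{R},\tau)}$ when $\ring{R}=\mathbb{k}$. Under these identifications the functor $Top_{(\mathbb{k},\tau)}$ of Corollary~\ref{cor:top_dual_of_a_top_free_module} coincides with the one appearing in the statement, and likewise for $Alg_{(\mathbb{k},\tau)}^{\mathsf{op}}$.

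Substituting these identifications into the equivalence supplied by Theorem~\ref{thm:equiv_of_cats} then gives exactly the desired statement. There is no real obstacle: the only thing to check is that the various categorical objects from the ring-theoretic framework really do coincide on the nose with their vector-space counterparts, which is immediate from the definitions and the fact that free $\mathbb{k}$-modules are just $\mathbb{k}$-vector spaces.
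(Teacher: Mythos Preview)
Your proof is correct and follows exactly the approach the paper intends: the corollary is stated without proof because it is the immediate specialization of Theorem~\ref{thm:equiv_of_cats} to a field with a ring topology, using Lemma~\ref{lem:rigidity_of_field_with_ring_topology} for rigidity and the identification $\mathbf{FreeMod}_{\mathbb{k}}=\mathbf{Vect}_{\mathbb{k}}$.
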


\paragraph{Finite-dimensional vector spaces.}

Let $\mathbb{k}$ be a field. Let $(M,\sigma)$ be a topologically-free $(\mathbb{k},\mathsf{d})$-vector space with $M$ finite-dimensional. Then, $\sigma$ is the discrete topology on $M$. It follows that $(M,\sigma)'=M^*$, and the equivalence established in Corollary~\ref{cor:equiv_of_cats_0} coincides with the classical dual equivalence $\mathbf{FinDimVect}_{\mathbb{k}}\simeq \mathbf{FinDimVect}_{\mathbb{k}}^{\mathsf{op}}$ under the algebraic dual functor, where $\mathbf{FinDimVect}_{\mathbb{k}}$ is the category of finite-dimensional $\mathbb{k}$-vector spaces. 

\paragraph{Linearly compact vector spaces.}\label{subsubsec:linearly_compact_tvs}

Let $\mathbb{k}$ be a field. A topological $(\mathbb{k},\mathsf{d})$-vector space $(M,\sigma)$ is said to be a {\em linearly compact} $\mathbb{k}$-vector space\index{Linearly compact $\mathbb{k}$-vector space} when $(M,\sigma)\simeq (\mathbb{k},\mathsf{d})^X$ for some set $X$ (see~\cite[Proposition~24.4, p.~105]{Bergman}). The full subcategory $\mathbf{LCpVect}_{\mathbb{k}}$\index{LCpVect@$\mathbf{LCpVect}_{\mathbb{k}}$} of $\mathbf{TopVect}_{(\mathbb{k},\mathsf{d})}$ spanned by these spaces is equal to $\mathbf{TopFreeVect}_{(\mathbb{k},\mathsf{d})}$. 

\begin{corollary} (of Theorem~\ref{thm:equiv_of_cats})
Let $\ring{R}$ be a ring. For each rigid topologies $\tau,\sigma$ on $\ring{R}$, the categories $\mathbf{TopFreeMod}_{(\ring{R},\tau)}$ and $\mathbf{TopFreeMod}_{(\ring{R},\sigma)}$ are equivalent. Moreover, for each field $(\mathbb{k},\tau)$ with a ring topology, $\mathbf{TopFreeVect}_{(\mathbb{k},\tau)}$ is equivalent to $\mathbf{LCpVect}_{\mathbb{k}}$.
\end{corollary}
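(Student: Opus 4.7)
The plan is to derive both assertions directly from Theorem~\ref{thm:equiv_of_cats}, relying on the fact that equivalence of categories is a transitive relation and that the opposite of an equivalence is again an equivalence. First I would recall the standard categorical facts that $\mathbf{C}\simeq\mathbf{D}$ via a functor $F$ yields $\mathbf{C}^{\mathsf{op}}\simeq\mathbf{D}^{\mathsf{op}}$ via $F^{\mathsf{op}}$, and that the composition of two equivalences remains an equivalence (with composition of the equivalence inverses serving as the new equivalence inverse).

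For the first assertion, Theorem~\ref{thm:equiv_of_cats} applied to the rigid topology $\tau$ yields an equivalence
$$\mathbf{TopFreeMod}_{(\ring{R},\tau)}^{\mathsf{op}}\xrightarrow{Top_{(\ring{R},\tau)}}\mathbf{FreeMod}_{\ring{R}}$$
with equivalence inverse $Alg_{(\ring{R},\tau)}^{\mathsf{op}}$, and analogously for $\sigma$. Composing the first equivalence with an equivalence inverse of the second produces
$$\mathbf{TopFreeMod}_{(\ring{R},\tau)}^{\mathsf{op}}\xrightarrow{Alg_{(\ring{R},\sigma)}^{\mathsf{op}}\circ\,Top_{(\ring{R},\tau)}}\mathbf{TopFreeMod}_{(\ring{R},\sigma)}^{\mathsf{op}},$$
and passing to opposites delivers the desired equivalence $\mathbf{TopFreeMod}_{(\ring{R},\tau)}\simeq \mathbf{TopFreeMod}_{(\ring{R},\sigma)}$.

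For the second assertion, I would invoke Lemma~\ref{lem:rigidity_of_field_with_ring_topology} to see that $(\mathbb{k},\tau)$ is rigid, and Lemma~\ref{lem:rigidity_of_discrete_rings} to see that $(\mathbb{k},\mathsf{d})$ is also rigid. The paragraph immediately preceding the corollary identifies $\mathbf{LCpVect}_{\mathbb{k}}=\mathbf{TopFreeVect}_{(\mathbb{k},\mathsf{d})}$. Applying the first assertion with $\sigma:=\mathsf{d}$ then yields
$$\mathbf{TopFreeVect}_{(\mathbb{k},\tau)}\simeq \mathbf{TopFreeVect}_{(\mathbb{k},\mathsf{d})}=\mathbf{LCpVect}_{\mathbb{k}}.$$

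There is no essential obstacle here: the statement is a transparent consequence of the transitivity of categorical equivalence applied to Theorem~\ref{thm:equiv_of_cats}, together with the basic stock of rigid examples already established. The only point worth making explicit, should one wish to exhibit the equivalence concretely, is that the functor $Alg_{(\ring{R},\sigma)}^{\mathsf{op}}\circ Top_{(\ring{R},\tau)}$ witnesses the isomorphism at the level of opposite categories.
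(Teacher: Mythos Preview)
Your proof is correct and matches the paper's approach: the paper states this result as an immediate corollary of Theorem~\ref{thm:equiv_of_cats} without giving an explicit proof, and your argument---transitivity of equivalence through $\mathbf{FreeMod}_{\ring{R}}$, plus the identification $\mathbf{LCpVect}_{\mathbb{k}}=\mathbf{TopFreeVect}_{(\mathbb{k},\mathsf{d})}$---is precisely the intended derivation.
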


In particular, one recovers the result of J.~Dieudonn\'e~\cite{Dieudo} that $\mathbf{Vect}_{\mathbb{k}}^{\mathsf{op}}\simeq \mathbf{LCpVect}_{\mathbb{k}}$. 

\paragraph{The universal property of $(R,\tau)^X$.}\label{sec:univ_property_of_Rtau_X}

For a ring $\ring{R}$, the forgetful functor $\mathbf{Mod}_{\ring{R}}\xrightarrow{|\cdot|}\mathbf{Set}$ (see Remark~\ref{rem:categorytheoretic_recasting_of_free_modules}) may be restricted as indicated in the following commutative diagram, and the restriction still is denoted $\mathbf{FreeMod}_{\ring{R}}\xrightarrow{|\cdot|}\mathbf{Set}$.
\begin{equation}
\xymatrix@R=0.8pc{
\mathbf{Mod}_{\ring{R}} \ar[r]^{|\cdot|}& \mathbf{Set}\\
\mathbf{FreeMod}_{\ring{R}}\ar@{^{(}->}[u]\ar[ru]&
}
\end{equation}

Likewise $\mathbf{Set}\xrightarrow{F_{\ring{R}}}\mathbf{Mod}_{\ring{R}}$ (see again Remark~\ref{rem:categorytheoretic_recasting_of_free_modules}) may be co-restricted as indicated by the commutative diagram below, and the co-restriction is given the same name $\mathbf{Set}\xrightarrow{F_{\ring{R}}}\mathbf{FreeMod}_{\ring{R}}$. 
\begin{equation}
\xymatrix@R=0.8pc{
\mathbf{Set} \ar[r]^{F_{\ring{R}}}\ar[rd]& \mathbf{Mod}_{\ring{R}}\\
&\mathbf{FreeMod}_{\ring{R}}\ar@{^{(}->}[u]
}
\end{equation}
(Of course, when $\ring{R}$ is a field $\mathbb{k}$, there is no need to consider the corresponding co-restrictions.)

The adjunction $F_{\ring{R}}\dashv |\cdot|\colon \mathbf{Set}\to\mathbf{Mod}_{\ring{R}}$ gives rise to a new one $F_{\ring{R}}\dashv |\cdot|\colon \mathbf{Set}\to \mathbf{FreeMod}_{\ring{R}}$~\cite[p.~147]{MacLane}, and by composition, for each rigid ring $(\ring{R},\tau)$, there is also the adjunction 
$Alg^{\mathsf{op}}_{(\ring{R},\tau)}\circ F_{\ring{R}}\dashv |\cdot|\circ Top_{(\ring{R},\tau)}\colon \mathbf{Set}\to \mathbf{TopFreeMod}_{(\ring{R},\tau)}^{\mathsf{op}}$. Since $(R,\tau)^{X}\simeq ((R^{(X)})^*,w^*_{(\ring{R},\tau)})$ (Lemma~\ref{lem:rho_X_is_iso}), this may be translated into a {\em universal property} of $(R,\tau)^{X}$, as explained below, which somehow legitimates the terminology {\em topologically-free}.

\begin{proposition}\label{prop:free_top_mod}
Let us assume that $(\ring{R},\tau)$ is rigid. 
Let $X$ be a set. For each topologically-free module $(M,\sigma)$ and any map $X\xrightarrow{f}|(M,\sigma)'|$, there is a unique continuous $(\ring{R},\tau)$-linear map $(M,\sigma)\xrightarrow{f^{\sharp}}(R,\tau)^{X}$ such that $|(f^\sharp)'|\circ |\lambda_X|\circ\delta^{\ring{R}}_X=f$ (recall that $\delta_X^{\ring{R}}(x)=\delta_x^{\ring{R}}$, $x\in X$).
\end{proposition}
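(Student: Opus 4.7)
The plan is to deduce the statement from the composite of two adjunctions, exactly as hinted in the paragraph preceding the proposition. The first is the free-module adjunction $F_{\ring{R}}\dashv |\cdot|\colon \mathbf{Set}\to \mathbf{FreeMod}_{\ring{R}}$ recalled just above, whose unit at $X$ is $\delta^{\ring{R}}_X$. The second comes from Theorem~\ref{thm:equiv_of_cats}: since every equivalence of categories is an adjoint to its quasi-inverse, one obtains $Alg^{\mathsf{op}}_{(\ring{R},\tau)}\dashv Top_{(\ring{R},\tau)}\colon \mathbf{FreeMod}_{\ring{R}}\to \mathbf{TopFreeMod}^{\mathsf{op}}_{(\ring{R},\tau)}$, with unit $\Lambda$ (Proposition~\ref{cor:first_nat_iso}) and counit $\Gamma^{\mathsf{op}}$ (Proposition~\ref{lem:second_nat_iso}).

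Composing the two yields the adjunction $Alg^{\mathsf{op}}_{(\ring{R},\tau)}\circ F_{\ring{R}}\dashv |\cdot|\circ Top_{(\ring{R},\tau)}\colon \mathbf{Set}\to\mathbf{TopFreeMod}^{\mathsf{op}}_{(\ring{R},\tau)}$, whose unit at $X$ is the pasting
$$X\xrightarrow{\delta^{\ring{R}}_X}|R^{(X)}|\xrightarrow{|\Lambda_{R^{(X)}}|}|((R^{(X)})^*)'|.$$
Its universal property asserts: for each topologically-free module $(M,\sigma)$ and each set map $f\colon X\to |(M,\sigma)'|$, there is a unique morphism $g\colon (M,\sigma)\to Alg_{(\ring{R},\tau)}(R^{(X)})$ in $\mathbf{TopFreeMod}_{(\ring{R},\tau)}$ satisfying $|g'|\circ |\Lambda_{R^{(X)}}|\circ \delta^{\ring{R}}_X=f$.

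It then remains to transport this along the isomorphism $\rho_X\colon (R,\tau)^X\simeq Alg_{(\ring{R},\tau)}(R^{(X)})$ of Lemma~\ref{lem:rho_X_is_iso}: set $f^\sharp:=\rho_X^{-1}\circ g$, which is the unique continuous $(\ring{R},\tau)$-linear map $(M,\sigma)\to (R,\tau)^X$ corresponding to $g$. Substituting $g'=(f^\sharp)'\circ \rho_X'$ into the universal property and using $\rho_X'\circ \Lambda_{R^{(X)}}=\lambda_X$ (the $M=R^{(X)}$, $\theta_X=\mathrm{id}$ instance of diagram~\eqref{diag:lienentreleslambdas}), the identity becomes $|(f^\sharp)'|\circ |\lambda_X|\circ \delta^{\ring{R}}_X=f$, as required. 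Uniqueness of $f^\sharp$ is inherited from uniqueness of $g$ since $\rho_X$ is an isomorphism.

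The only point to check is the equality $\rho_X'\circ \Lambda_{R^{(X)}}=\lambda_X$, which is immediate from the definitions: for $p\in R^{(X)}$ and $h\in R^X$, $(\rho_X'(\Lambda_{R^{(X)}}(p)))(h)=\Lambda_{R^{(X)}}(p)(\rho_X(h))=\rho_X(h)(p)=\sum_{x\in X}p(x)h(x)=(\lambda_X(p))(h)$. There is no genuine obstacle here: the substantive content already lives in Theorem~\ref{thm:equiv_of_cats}, and the proposition is a formal consequence of combining it with the familiar adjunction $F_{\ring{R}}\dashv |\cdot|$.
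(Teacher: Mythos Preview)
Your argument is correct and follows the same strategy as the paper: both exploit the composite adjunction $Alg^{\mathsf{op}}_{(\ring{R},\tau)}\circ F_{\ring{R}}\dashv |\cdot|\circ Top_{(\ring{R},\tau)}$ announced just before the proposition, then transport along $\rho_X$. The only difference is stylistic: the paper unwinds the adjunction by hand, giving the explicit formula $f^{\sharp}=\rho_X^{-1}\circ\tilde{f}^*\circ\Gamma_{(M,\sigma)}$ (this is precisely the counit description of the adjoint correspondence) and then verifying both existence and uniqueness by direct calculation, whereas you invoke the universal property of the composite unit abstractly. One small remark: asserting that $\Lambda$ and $\Gamma^{\mathsf{op}}$ are \emph{the} unit and counit of the adjunction tacitly assumes the triangle identities, which need not hold for arbitrary equivalence data; but since your argument only uses the universal property of $\Lambda$ as a unit---and any natural isomorphism $id\Rightarrow GF$ with $F,G$ quasi-inverse equivalences is the unit of \emph{some} adjunction $F\dashv G$---this is harmless.
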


\begin{proof}
There is a unique $\ring{R}$-linear map $R^{(X)}\xrightarrow{\tilde{f}}(M,\sigma)'$ such that $|\tilde{f}|\circ\delta^{\ring{R}}_X=f$. Let us define the continuous linear map $(M,\sigma)\xrightarrow{f^{\sharp}}(R,\tau)^X:=(M,\sigma)\xrightarrow{\Gamma_{(M,\sigma)}}(((M,\sigma)')^*,w^*_{(\ring{R},\tau)})\xrightarrow{(\tilde{f})^*}((R^{(X)})^*,w^*_{(\ring{R},\tau)})\xrightarrow{\rho_X^{-1}}(R,\tau)^X$. One has 
\begin{equation}
\begin{array}{lll}
|(f^{\sharp})'|\circ |\lambda_X|\circ\delta^{\ring{R}}_X&=&|\Gamma_{(M,\sigma)}'|\circ |((\tilde{f})^*)'|\circ|(\rho_X^{-1})'|\circ |\lambda_X|\circ \delta_X^{\ring{R}}\\
&=&|\Gamma_{(M,\sigma)}'|\circ |((\tilde{f})^*)'|\circ|\Lambda_{R^{(X)}}|\circ\delta_X^{\ring{R}}\\
&&\mbox{(because $(\rho^{-1}_X)'\circ \lambda_X=\Lambda_{R^{(X)}}$)}\\
&=&|\Gamma_{(M,\sigma)}'|\circ |\Lambda_{(M,\sigma)'}|\circ |\tilde{f}|\circ\delta_X^{\ring{R}}\\
&&\mbox{(by naturality of $\Lambda$)}\\
&=&|\tilde{f}|\circ\delta_X^{\ring{R}}\\
&&\mbox{(triangular identities for an adjunction~\cite[p.~85]{MacLane})}\\
&=&f.
\end{array}
\end{equation}
It remains to check uniqueness of $f^{\sharp}$. Let $(M,\sigma)\xrightarrow{g}(R,\tau)^X$ be a continuous linear map such that $|g'|\circ |\lambda_X|\circ \delta^{\ring{R}}_X=f$. Then, $g'\circ\lambda_X=\tilde{f}$. Thus, $\lambda_X^*\circ (g')^*=\tilde{f}^*=\rho_X\circ f^{\sharp}\circ \Gamma^{-1}_{(M,\sigma)}$. So $\rho_X\circ\Gamma^{-1}_{(R,\tau)^X}\circ (g')^*=\rho_X\circ f^{\sharp}\circ\Gamma^{-1}_{(M,\sigma)}$ because $\Gamma_{(R,\tau)^X}=(\lambda_X^{-1})^*\circ \rho_X$ (by direct inspection), and thus $\Gamma^{-1}_{(R,\tau)^X}\circ (g')^*=f^{\sharp}\circ \Gamma^{-1}_{(M,\sigma)}$. Then, by naturality of $\Gamma^{-1}$, $g\circ \Gamma_{(M,\sigma)}^{-1}=f^{\sharp}\circ\Gamma^{-1}_{(M,\sigma)}$.
\end{proof}

\begin{corollary}\label{cor:P_is_the_free_top_free_fun}
Let $(\ring{R},\tau)$ be a rigid ring. 
$\mathbf{Set}\xrightarrow{P_{(\ring{R},\tau)}^{\mathsf{op}}}\mathbf{TopFreeMod}^{\mathsf{op}}_{(R,\tau)}$ is a left adjoint of $\mathbf{TopFreeMod}^{\mathsf{op}}_{(R,\tau)}\xrightarrow{Top_{(\ring{R},\tau)}}\mathbf{FreeMod}_R\xrightarrow{|-|}\mathbf{Set}$, and thus is naturally equivalent to $\mathbf{Set}\xrightarrow{Alg_{(\ring{R},\tau)}^{\mathsf{op}}\circ F_{\ring{R}}}\mathbf{TopFreeMod}_{(R,\tau)}^{\mathsf{op}}$.
\end{corollary}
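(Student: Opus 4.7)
The plan is to recognise that Proposition~\ref{prop:free_top_mod} already amounts to the desired adjunction, phrased as a universal property rather than as a hom-set bijection, and then to deduce the natural equivalence with $Alg_{(\ring{R},\tau)}^{\mathsf{op}}\circ F_{\ring{R}}$ from the uniqueness of left adjoints.

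First, I would rewrite Proposition~\ref{prop:free_top_mod} as the hom-set bijection
\[
\mathbf{Set}(X,|(M,\sigma)'|) \longrightarrow \mathbf{TopFreeMod}_{(\ring{R},\tau)}^{\mathsf{op}}(P_{(\ring{R},\tau)}^{\mathsf{op}}(X),(M,\sigma)),\quad f\mapsto f^{\sharp},
\]
with inverse $g\mapsto |g'|\circ|\lambda_X|\circ\delta_X^{\ring{R}}$, the left-hand side being just $\mathbf{Set}(X,(|\cdot|\circ Top_{(\ring{R},\tau)})(M,\sigma))$. Then I would verify naturality in both variables. Naturality in $(M,\sigma)$ follows from the explicit formula $f^{\sharp}=\rho_X^{-1}\circ \tilde f^{*}\circ \Gamma_{(M,\sigma)}$ combined with naturality of $\Gamma$ and functoriality of the algebraic dual. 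Naturality in $X$ reduces, after unfolding, to compatibility of $\delta_X^{\ring{R}}$ and $\lambda_X$ with the action of $F_{\ring{R}}$ and $P_{(\ring{R},\tau)}$ on maps $X\to Y$. This supplies the adjunction $P_{(\ring{R},\tau)}^{\mathsf{op}}\dashv |\cdot|\circ Top_{(\ring{R},\tau)}$, with unit at $X$ given by $|\lambda_X|\circ\delta_X^{\ring{R}}$.

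For the second assertion, I would appeal to the remark already made just before Proposition~\ref{prop:free_top_mod}: composing the adjunction $F_{\ring{R}}\dashv |\cdot|$ with the dual equivalence $Alg_{(\ring{R},\tau)}^{\mathsf{op}}\dashv Top_{(\ring{R},\tau)}$ of Theorem~\ref{thm:equiv_of_cats} produces an adjunction $Alg_{(\ring{R},\tau)}^{\mathsf{op}}\circ F_{\ring{R}}\dashv |\cdot|\circ Top_{(\ring{R},\tau)}$. Since left adjoints to a fixed functor are unique up to natural isomorphism, the natural equivalence $P_{(\ring{R},\tau)}^{\mathsf{op}}\simeq Alg_{(\ring{R},\tau)}^{\mathsf{op}}\circ F_{\ring{R}}$ follows immediately.

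The main obstacle is naturality in $X$ of the bijection $f\mapsto f^{\sharp}$: the formula for $f^{\sharp}$ is not written in a form that makes functoriality in $X$ manifest, so one has to trace through the composite $\rho_X^{-1}\circ \tilde f^{*}\circ \Gamma_{(M,\sigma)}$ and exploit the identity $(\rho_X^{-1})'\circ \lambda_X=\Lambda_{R^{(X)}}$ (used already inside the proof of Proposition~\ref{prop:free_top_mod}) in order to relate $P_{(\ring{R},\tau)}(h)^{*}$ to $F_{\ring{R}}(h)$ for a map $h\colon X\to Y$. Once this bridge is in place, the remaining calculations are purely formal diagram chases.
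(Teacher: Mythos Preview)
Your proposal is correct and follows the same underlying idea as the paper: Proposition~\ref{prop:free_top_mod} is recognised as a universal property, hence an adjunction, and the natural equivalence with $Alg_{(\ring{R},\tau)}^{\mathsf{op}}\circ F_{\ring{R}}$ then follows from uniqueness of left adjoints.

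The only difference is in the packaging of the first step. You propose to exhibit the hom-set bijection and then check naturality separately in $(M,\sigma)$ and in $X$. The paper instead uses the standard ``universal arrow $\Rightarrow$ adjunction'' construction (\cite[Chap.~IV, Thm.~2]{MacLane}): Proposition~\ref{prop:free_top_mod} furnishes, for each $X$, a universal arrow $|\lambda_X|\circ\delta_X^{\ring{R}}$ from $X$ to $|{-}|\circ Top_{(\ring{R},\tau)}$, which automatically yields a left adjoint whose object part is $X\mapsto (R,\tau)^X$ and whose morphism part sends $X\xrightarrow{f}Y$ to $(|\lambda_Y|\circ\delta_Y^{\ring{R}}\circ f)^{\sharp}$. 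It then remains only to check the single identity
\[
P_{(\ring{R},\tau)}(f)=(|\lambda_Y|\circ\delta_Y^{\ring{R}}\circ f)^{\sharp},
\]
which identifies this left adjoint with $P_{(\ring{R},\tau)}^{\mathsf{op}}$. This is exactly the calculation you isolate as the ``main obstacle'' (naturality in $X$), but the paper's route makes the naturality in $(M,\sigma)$ automatic and reduces the whole verification to that one formula.
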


\begin{proof}
A quick calculation shows that $P_{(\ring{R},\tau)}(f)=(|\lambda_Y|\circ \delta_Y^{\ring{R}}\circ f)^{\sharp}$ for a set-theoretic map $X\xrightarrow{f}Y$. The relation $f\mapsto (\lambda_Y\circ\delta_Y^{\ring{R}}\circ f)^{\sharp}$ provides a functor from $\mathbf{Set}^{\mathsf{op}}$ to $\mathbf{TopFreeMod}_{(\ring{R},\tau)}$ whose opposite is, by construction, a left adjoint of $|-|\circ Top_{(\ring{R},\tau)}$ (this is basically the content of Proposition~\ref{prop:free_top_mod}).
\end{proof}

\section{Tensor product of topologically-free modules}\label{sec:topprodtopfreemodules}

In this section most of the ingredients so far introduced and developed fit together so as to lift the equivalence  $\mathbf{FreeMod}\simeq\mathbf{TopFreeMod}^{\mathsf{op}}_{(\ring{R},\tau)}$  to a duality between some (suitably generalized) topological algebras and coalgebras. 
Let us briefly describe how this goal is achieved:
\begin{enumerate}
\item A topological tensor product $\ostar_{(\ring{R},\tau)}$ for topologically-free modules (over a rigid ring $(\ring{R},\tau)$) is provided by transport of the algebraic tensor product $\otimes_{\ring{R}}$ of $\ring{R}$-modules along the dual equivalence from Section~\ref{sec:recasting}. 
\item A topological basis (Definition~\ref{def:top_basis}) for $(M,\sigma)\ostar_{(\ring{R},\tau)}(N,\gamma)$ is described in terms of topological bases of $(M,\sigma),(N,\gamma)$. 
\item The aforementioned equivalence is proved to be compatible with $\otimes_{\ring{R}}$ and $\ostar_{(\ring{R},\tau)}$ (i.e., it is a monoidal equivalence).
\item Accordingly, for category-theoretic reasons (see Appendix~\ref{appendix:moncat}), one obtains a dual equivalence between some topological algebras and coalgebras, still under the algebraic and topological dual functors.
\end{enumerate}


\subsection{Algebraic tensor product of modules}\label{sec:algebraic_tens_prod}

The results recalled below are well-known (cf.~\cite{BouAlg} for instance) but they give us the opportunity to introduce some notations used hereafter. 

Let $\ring{R}$ be a ring. For each $\ring{R}$-modules $M,N$, one denotes by $M\otimes_{\ring{R}}N$\index{$\otimes_{\ring{R}}$} their (algebraic) tensor product, and by $M\times N\xrightarrow{\otimes}M\otimes_{\ring{R}}N$ the {\em universal}\footnote{In other words, each $\ring{R}$-bilinear map $M\times N\xrightarrow{f}P$ uniquely factors as 
\begin{equation}
\xymatrix@R=0.7pc{
M\times N\ar[r]^{f}\ar[d]_{\otimes} &P\\
M\otimes_{\ring{R}}P\ar[ru]_{\tilde{f}}&
}
\end{equation}
where $\tilde{f}$ is $\ring{R}$-linear.} $\ring{R}$-bilinear map. As usually the image of $(x,y)\in M\times N$ by $\otimes$ is denoted $x\otimes y$. 

Furthermore, with the following isomorphisms $(M\otimes_{\ring{R}}N)\otimes_{\ring{R}}P\xrightarrow{\alpha_{M,N,P}}M\otimes_{\ring{R}}(N\otimes_{\ring{R}}P)$, $\alpha_{M,N,P}((x\otimes y)\otimes z)=x\otimes (y\otimes z)$, $M\otimes_{\ring{R}}R\xrightarrow{\rho_{M}}M$, $\rho_M(x\otimes 1_{\ring{R}})=x$, $R\otimes_{\ring{R}}N\xrightarrow{\lambda_N}N$, $\lambda_N(1_{\ring{R}}\otimes y)=y$, $M\otimes_{\ring{R}}N\xrightarrow{\sigma_{M,N}}N\otimes_{\ring{R}}M$, $\sigma_{M,N}(x\otimes y)=y\otimes x$, $\mathbb{Mod}_{\ring{R}}=(\mathbf{Mod}_{\ring{R}},\otimes_{\ring{R}},R)$ is a symmetric monoidal category (\cite[p.~184]{MacLane}). Given two sets $X,Y$, $R^{(X)}\otimes_{\ring{R}}R^{(Y)}\simeq R^{(X\times Y)}$ under the unique $\ring{R}$-linear map $R^{(X)}\otimes_{\ring{R}} R^{(Y)}\xrightarrow{\Phi_{X,Y}}R^{(X\times Y)}$\index{$\Phi_{X,Y}$} which maps $f\otimes g$ to $\sum_{(x,y)\in X\times Y}f(x)g(y)\delta_{(x,y)}^{\ring{R}}$, with inverse $R^{(X\times Y)}\xrightarrow{\Psi_{X,Y}}R^{(X)}\otimes_{\ring{R}} R^{(Y)}$\index{$\Psi_{X,Y}$} the unique $\ring{R}$-linear whose values on the basis elements are given by $\Psi_{X,Y}(\delta_{(x,y)}^{\ring{R}})=\delta_x^{\ring{R}}\otimes_{\ring{R}}\delta_y^{\ring{R}}$. 

By functoriality, it is clear that given free modules $M,N$, $M\otimes_{\ring{R}} N$ is free too.
As a consequence,  $\mathbb{FreeMod}_{\ring{R}}=(\mathbf{FreeMod}_{\ring{R}},\otimes_{\ring{R}},R)$\index{FreeMod@$\mathbb{FreeMod}_{\ring{R}}$} is a (symmetric) monoidal subcategory\footnote{By a {\em (symmetric) monoidal subcategory}\index{Monoidal subcategory} of a (symmetric) monoidal category $\mathbb{C}=(\mathbf{C},-\otimes-,I)$ we mean a subcategory $\mathbf{C}'$ of $\mathbf{C}$, closed under tensor products, containing $I$, and the coherence constraints of $\mathbb{C}$ between $\mathbf{C}'$-objects. (The last condition is automatically fulfilled when $\mathbf{C}'$ is a full subcategory.) The embedding $E_{\mathbf{C}'}$ of $\mathbf{C}'$ into $\mathbf{C}$ then is a strict monoidal functor $\mathbb{E}_{\mathbb{C}'}$ (see e.g.,~\cite[Def.~2, p.~4876]{PoinsotPorstunit}.} of $\mathbb{Mod}_{\ring{R}}$.

\subsection{Topological tensor product of topologically-free modules}\label{subsec:top_tens_prod_of_top_free_mod}

We now wish to take advantage of the equivalence of categories $\mathbf{FreeMod}_{\ring{R}}\simeq \mathbf{TopFreeMod}_{(\ring{R},\tau)}^{\mathsf{op}}$ (Theorem~\ref{thm:equiv_of_cats}) for a rigid ring $(\ring{R},\tau)$, to introduce a topological tensor product of topologically-free modules. {\textbf{From here to the end of Section~\ref{subsec:top_tens_prod_of_top_free_mod}, $(\ring{R},\tau)$ denotes a rigid ring.}}

\paragraph{The bifunctor $\ostar_{(\ring{R},\tau)}$.}
Let $(M,\sigma),(N,\gamma)$ be two topologically-free $(\ring{R},\tau)$-modules. One defines their {\em topological tensor product over $(\ring{R},\tau)$}\index{Topological tensor product} as 
\begin{equation}
(M,\sigma)\ostar_{(\ring{R},\tau)}(N,\gamma):=Alg_{(\ring{R},\tau)}((M,\sigma)'\otimes_{\ring{R}} (N,\gamma)').
\end{equation}
One immediately observes that $(M,\sigma)\ostar_{(\ring{R},\tau)}(N,\gamma)$\index{$\ostar_{(\ring{R},\tau)}$} still is a topologically-free $(\ring{R},\tau)$-module as
$(M,\sigma)'$ and $(N,\gamma)'$ are free $\ring{R}$-modules (Lemma~\ref{lem:dual_lin_basis}),
$(M,\sigma)'\otimes_{\ring{R}}(N,\gamma)'$ also is (Section~\ref{sec:algebraic_tens_prod}),
and the algebraic dual of a free module is topologically-free (Lemma~\ref{lem:dual_top_basis}).
 
Actually, this definition is just the object component of a bifunctor\footnote{By a  {\em bifunctor} is meant a functor with domain a product of two categories.}  $$\mathbf{TopFreeMod}_{(\ring{R},\tau)}\times \mathbf{TopFreeMod}_{(\ring{R},\tau)}\xrightarrow{-\ostar_{(\ring{R},\tau)}-}\mathbf{TopFreeMod}_{(\ring{R},\tau)}$$ namely\footnote{For every categories $\mathbf{C},\mathbf{D}$, $(\mathbf{C}\times\mathbf{D})^{\mathsf{op}}=\mathbf{C}^{\mathsf{op}}\times\mathbf{D}^{\mathsf{op}}$.} $\mathbf{TopFreeMod}_{(\ring{R},\tau)}\times\mathbf{TopFreeMod}_{(\ring{R},\tau)}\xrightarrow{Top^{\mathsf{op}}_{(\ring{R},\tau)}\times Top^{\mathsf{op}}_{(\ring{R},\tau)}}\mathbf{Mod}_{\ring{R}}^{\mathsf{op}}\times\mathbf{Mod}_{\ring{R}}^{\mathsf{op}}\xrightarrow{\otimes_{\ring{R}}^{\mathsf{op}}}\mathbf{Mod}_{\ring{R}}^{\mathsf{op}}\xrightarrow{Alg_{(\ring{R},\tau)}}\mathbf{TopMod}_{(\ring{R},\tau)}$. 

\begin{remark}\label{rem:explicit_form_of_a_member_of_the_tensor_product}
Let $X,Y$ be sets. One has 
\begin{equation}
\begin{array}{lll}
(R,\tau)^{X}\ostar_{(\ring{R},\tau)}(R,\tau)^{Y}&=&((((R,\tau)^X)'\otimes_{\ring{R}}((R,\tau)^Y)')^*,w_{(\ring{R},\tau)}^{*})\\
&\simeq&
((R^{(X)}\otimes_{\ring{R}} R^{(Y)})^*,w^*_{(\ring{R},\tau)})\\
&&\mbox{(under $(\lambda_X\otimes_{\ring{R}}\lambda_Y)^*$)}\\
&\simeq& ((R^{(X\times Y)})^*,w^*_{(\ring{R},\tau)})\\
&&\mbox{(under $\Psi_{X,Y}^*$; see Section~\ref{sec:algebraic_tens_prod})}\\
&\simeq& (R,\tau)^{X\times Y}.\\
&&\mbox{(under $\rho^{-1}_{X\times Y}$)}
\end{array}
\end{equation}
\end{remark}

Given $f_i\in \mathbf{TopFreeMod}_{(\ring{R},\tau)}((M_i,\sigma_i),(N_i,\gamma_i))$, $i=1,2$, then $f_1\ostar_{(\ring{R},\tau)}f_2:=(M_1,\sigma_1)\ostar_{(\ring{R},\tau)}(M_2,\sigma_2)\xrightarrow{(f_1'\otimes_{\ring{R}} f_2')^*}(N_1,\gamma_1)\ostar_{(\ring{R},\tau)}(N_2,\gamma_2)$. 

In details, let $L\in ((M_1,\sigma_1)'\otimes_{\ring{R}}(M_2,\sigma_2)')^*$, $\ell_1\in (N_1,\gamma_1)'$ and  $\ell_2\in (N_2,\gamma_2)'$. Then, 
\begin{equation}\label{eq:explicit_form_of_tens_prod_of_lin_maps}
\begin{array}{lll}
((f_1\ostar_{(\ring{R},\tau)}f_2)(L))(\ell_1\otimes\ell_2)&=&(((f'\otimes_{\ring{R}}g')^*)(\ell))(\ell_1\otimes\ell_2)\\
&=&
L((f_1'\otimes_{\ring{R}} f_2')(\ell_1\otimes \ell_2))\\
&=&
L((\ell_1\circ f_1)\otimes(\ell_2\circ f_2)).
\end{array}
\end{equation}

\paragraph{A topological basis of $(M,\sigma)\ostar_{(\ring{R},\tau)}(N,\gamma)$.}
Our next goal will be to explicitly describe a topological basis (Definition~\ref{def:top_basis}) of $(M,\sigma)\ostar_{(\ring{R},\tau)}(N,\gamma)$ in terms of topological bases of $(M,\sigma)$ and $(N,\gamma)$. 

\begin{definition}\label{def:can_inj_of_Mstar_otimes_Nstar_into_M_otimes_N_star}
Given a ring $\ring{S}$, for every $\ring{S}$-modules $M,N$, one has a natural $\ring{R}$-linear map $M^*\otimes_{\ring{S}} N^*\xrightarrow{\Theta_{M,N}}(M\otimes_{\ring{S}} M)^*$\index{$\Theta_{M,N}$} given by $(\Theta_{M,N}(\ell_1\otimes\ell_2))(u\otimes v)=\ell_1(u)\ell_2(v)$, $\ell_1\in M^*$, $\ell_2\in N^*$, $u\in M$ and $v\in N$. 
\end{definition}

Let $(M,\sigma),(N,\gamma)$ be two topologically-free $(\ring{R},\tau)$-modules. Let $u\in M$ and $v\in N$. Let us define\index{$\ostar$} 
\begin{equation}
u\ostar v:=\Theta_{(M,\sigma)',(N,\gamma)'}(\Gamma_{(M,\sigma)}(u)\otimes_{\ring{R}} \Gamma_{(N,\gamma)}(v))\in (M,\sigma)\otimes_{(\ring{R},\tau)}(N,\gamma).
\end{equation}
In details, given $\ell_1\in (M,\sigma)'$ and $\ell_2\in (N,\gamma)'$, $(u\ostar v)(\ell_1\otimes \ell_2)=\ell_1(u)\ell_2(v)$.

\begin{lemma}
Let $(M,\sigma)$ and $(N,\gamma)$ be both topologically-free $(\ring{R},\tau)$-modules, with respective topological bases $B,D$. The map $B\times D\xrightarrow{-\ostar-} (M,\sigma)\ostar_{(\ring{R},\tau)}(N,\gamma)$ given by $(b,d)\mapsto b\ostar d$, is one-to-one.
\end{lemma}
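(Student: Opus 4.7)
The plan is to separate pairs by evaluating the putative coincidence $b_1\ostar d_1 = b_2\ostar d_2$ at a cleverly chosen element of $(M,\sigma)'\otimes_{\ring{R}}(N,\gamma)'$. The natural candidates are the pure tensors of topological coefficient maps: recall from Definition~\ref{def:top_basis} that each $b\in B$ yields a continuous linear functional $b'\in (M,\sigma)'$, and similarly for $D$; and from Remark~\ref{rem:top_coef_maps_as_delta} that $b'(b'')=\delta_b(b'')\cdot 1_{\ring{R}}$ for $b,b''\in B$ (Kronecker symbol), and likewise for $D$.

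So, assuming $b_1\ostar d_1 = b_2\ostar d_2$ with $(b_i,d_i)\in B\times D$, I would evaluate both sides at the tensor $b_1'\otimes d_1'\in (M,\sigma)'\otimes_{\ring{R}}(N,\gamma)'$. Using the explicit formula $(u\ostar v)(\ell_1\otimes\ell_2)=\ell_1(u)\ell_2(v)$ recorded immediately above the statement of the lemma, the left-hand side reduces to $b_1'(b_1)\,d_1'(d_1)=1_{\ring{R}}$, while the right-hand side reduces to $b_1'(b_2)\,d_1'(d_2)\in\{0,1_{\ring{R}}\}$.

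In the non-trivial case $1_{\ring{R}}\neq 0$, this scalar identity forces both factors on the right to be $1_{\ring{R}}$, whence $b_2=b_1$ and $d_2=d_1$. The trivial-ring case is vacuous, since Remark~\ref{rem:top_coef_maps_as_delta} gives $0\notin B$, so $B=\emptyset$ when $\ring{R}=\{0\}$. I anticipate no real obstacle; the only point worth flagging is that one must apply the evaluation formula for $u\ostar v$ with $u,v$ ranging over basis elements, where the topological coefficient maps act as Kronecker deltas, and this is precisely what converts injectivity into the scalar identity $b_1'(b_2)\,d_1'(d_2)=1_{\ring{R}}$.
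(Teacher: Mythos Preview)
Your argument is correct. The paper actually states this lemma without proof, so there is nothing to compare against directly; your evaluation at $b_1'\otimes d_1'$ together with the Kronecker behaviour $b'(d)=\delta_b(d)$ from Remark~\ref{rem:top_coef_maps_as_delta} is precisely the expected verification, and your handling of the trivial-ring case via $0\notin B$ is the right way to dispose of it.
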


\begin{lemma}\label{lem:continuity_of_point_tensors}
Let $(M,\sigma)$ and $(N,\gamma)$ be both topologically-free $(\ring{R},\tau)$-modules. The map $M\times N\xrightarrow{-\ostar-}(M,\sigma)\ostar_{(\ring{R},\tau)}(N,\gamma)$ is $\ring{R}$-bilinear and separately continuous in both variable. Moreover, if $\tau=\mathsf{d}$, then $\ostar$ is even jointly continuous.
\end{lemma}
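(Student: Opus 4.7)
The plan is to unfold the definition $u\ostar v=\Theta_{(M,\sigma)',(N,\gamma)'}(\Gamma_{(M,\sigma)}(u)\otimes_{\ring{R}}\Gamma_{(N,\gamma)}(v))$ and verify each assertion through this composite, exploiting throughout the explicit formula $(u\ostar v)(\ell_1\otimes\ell_2)=\ell_1(u)\ell_2(v)$ together with the fact that $(M,\sigma)\ostar_{(\ring{R},\tau)}(N,\gamma)$ carries the weak-$*$ topology on the algebraic dual of $(M,\sigma)'\otimes_{\ring{R}}(N,\gamma)'$. Bilinearity is immediate from chaining three ingredients: the $\ring{R}$-linearity of $\Gamma_{(M,\sigma)}$ and $\Gamma_{(N,\gamma)}$ (Proposition~\ref{lem:second_nat_iso}), the $\ring{R}$-bilinearity of $\otimes_{\ring{R}}$, and the $\ring{R}$-linearity of $\Theta_{(M,\sigma)',(N,\gamma)'}$ (Definition~\ref{def:can_inj_of_Mstar_otimes_Nstar_into_M_otimes_N_star}); this step is pure bookkeeping.

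For separate continuity, I would fix $v_0\in N$ and invoke the universal property of the initial topology: since $w^{*}_{(\ring{R},\tau)}$ on $((M,\sigma)'\otimes_{\ring{R}}(N,\gamma)')^{*}$ is induced by the family of evaluations $\Lambda_{(M,\sigma)'\otimes_{\ring{R}}(N,\gamma)'}(L)$ for $L\in (M,\sigma)'\otimes_{\ring{R}}(N,\gamma)'$, continuity of $u\mapsto u\ostar v_0$ reduces to the continuity, for each such $L$, of the composite $u\mapsto (u\ostar v_0)(L)$. Expanding $L=\sum_{j}\ell_{1,j}\otimes\ell_{2,j}$ as a finite sum of pure tensors, one obtains $(u\ostar v_0)(L)=\sum_{j}\ell_{1,j}(u)\ell_{2,j}(v_0)$, a finite $R$-linear combination of the continuous maps $\ell_{1,j}\in (M,\sigma)'$, hence continuous. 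The symmetric argument handles continuity in the second variable.

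For joint continuity when $\tau=\mathsf{d}$, I would fix isomorphisms $(M,\sigma)\simeq (R,\mathsf{d})^X$ and $(N,\gamma)\simeq (R,\mathsf{d})^Y$ and transport $\ostar$ along the identification $(R,\mathsf{d})^X\ostar_{(\ring{R},\mathsf{d})}(R,\mathsf{d})^Y\simeq (R,\mathsf{d})^{X\times Y}$ of Remark~\ref{rem:explicit_form_of_a_member_of_the_tensor_product}. A direct computation from $(u\ostar v)(\ell_1\otimes\ell_2)=\ell_1(u)\ell_2(v)$ shows that, in these coordinates, $\ostar$ reads $(u,v)\mapsto\bigl((x,y)\mapsto u(x)v(y)\bigr)$. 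Since $(R,\mathsf{d})^{X\times Y}$ carries the product topology, joint continuity reduces to the joint continuity of each composite $\pi_{(x,y)}\circ\ostar\colon (u,v)\mapsto m_{\ring{R}}(\pi_x(u),\pi_y(v))$, which follows from the joint continuity of $m_{\ring{R}}$ and of the projections $\pi_x,\pi_y$.

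The only subtlety I anticipate is keeping the identifications clean: one must consistently view elements of $(M,\sigma)\ostar_{(\ring{R},\tau)}(N,\gamma)$ as linear functionals on $(M,\sigma)'\otimes_{\ring{R}}(N,\gamma)'$, and remember that the weak-$*$ topology is probed by evaluations at arbitrary tensors (not only pure ones), which is why the finite-sum expansion plays a role in both the separate- and joint-continuity arguments. Once this bookkeeping is in place, no analytic obstacle remains, as joint continuity of $m_{\ring{R}}$ already suffices for the discrete step and for the reduction used in the general separately-continuous case.
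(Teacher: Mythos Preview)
Your proof is correct. For bilinearity and separate continuity your argument is essentially the paper's: the paper reduces the initial topology on the codomain to evaluations at basis tensors $x\otimes y$ via Lemma~\ref{lem:dual_of_free_is_top_free_mod} and observes $\Lambda(x\otimes y)(u\ostar v)=m_{\ring{R}}(x(u),y(v))$, while you test against general $L$ expanded as a finite sum of pure tensors; these are interchangeable.

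The joint-continuity arguments genuinely differ. The paper works intrinsically: it invokes \cite[Theorem~2.14]{Warner} to reduce to continuity at zero and then checks a basic-open-set condition, explicitly using discreteness to force $x(u)=0$ and $y(v)=0$ on prescribed finite sets. You instead transport to the concrete model $(R,\mathsf{d})^X\times(R,\mathsf{d})^Y\to(R,\mathsf{d})^{X\times Y}$ of Remark~\ref{rem:explicit_form_of_a_member_of_the_tensor_product} and verify each coordinate $\pi_{(x,y)}\circ\ostar=m_{\ring{R}}\circ(\pi_x\times\pi_y)$ directly. This is shorter, and more to the point it nowhere uses $\tau=\mathsf{d}$: joint continuity of $m_{\ring{R}}$ holds in any topological ring, so your argument in fact establishes joint continuity of $\ostar$ for every rigid $(\ring{R},\tau)$. (The same observation applies to the paper's own displayed identity $\Lambda(x\otimes y)(u\ostar v)=m_{\ring{R}}(x(u),y(v))$, which already exhibits the map as a composite of jointly continuous maps.) The paper's restriction to the discrete case thus appears to be an artifact of its proof method rather than a genuine constraint.
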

\begin{proof}
$\ring{R}$-bilinearity is clear. 
Since $(M,\sigma)'\otimes_{\ring{R}}(N,\gamma)'$ is free on $\{\, x\otimes y\colon x\in X,\ y\in Y\,\}$, where $X$ (resp. $Y$) is a basis of $(M,\sigma)'$ (resp. $(N,\gamma)'$), by Lemma~\ref{lem:dual_of_free_is_top_free_mod}, the topology $w^*_{(\ring{R},\tau)}$ on $(M,\sigma)\ostar_{(\ring{R},\tau)}(N,\gamma)$ is the initial topology induced by $((M,\sigma)'\otimes_R ((N,\mu)')^*\xrightarrow{\Lambda_{(M,\sigma)'\otimes_{\ring{R}} (N,\gamma)'}(x\otimes y)}(R,\tau)$,  $x\in X$, $y\in Y$. 

Let $x\in X$, $y\in Y$, $u\in M$ and $v\in N$. Then, $\Lambda_{(M,\sigma)'\otimes_{\ring{R}} (N,\gamma)'}(x\otimes y)(u\ostar v)=(u\ostar v)(x\otimes y)=x(u)y(v)=m_{\ring{R}}(x(u),x(v))$, and this automatically guarantees separate continuity in each variable of $\ostar$. 

Let us assume that $\tau=\mathsf{d}$. According to the above general case, to see that $\ostar$ is continuous, by~\cite[Theorem~2.14, p.~17]{Warner}, it suffices to prove continuity at zero of $\ostar$. Let $A\subseteq X\times Y$ be a finite set, and for each $(x,y)\in A$, let $U_{(x,y)}$ be an open neighborhood of zero in $(R,\mathsf{d})$. Let $A_1:=\{\, x\in X\colon \exists y\in Y,\ (x,y)\in A\,\}$ and $A_2:=\{\, y\in Y\colon \exists x\in X,\ (x,y)\in A\,\}$. $A_1,A_2$ are both finite and $A\subseteq A_1\times A_2$. Let $u\in M$ such that $x(u)=0$ for all $x\in A_1$, and $v\in N$ such that $y(v)=0$ for all $y\in A_2$. Then, $(u\ostar v)(x\otimes y)=0\in U_{(x,y)}$ for all $(x,y)\in A_1\times A_2$.
%
\end{proof}

\begin{remark}\label{rem:ostar_on_sum}
Let $X,Y$ be sets, and let $f\in R^X$, $g\in R^Y$. Since $\ostar$ is separately continuous by Lemma~\ref{lem:continuity_of_point_tensors},\footnote{The second equality in Eq.~(\ref{eq:explicit_computation_of_fostarg}) follows from the proof of~\cite[Theorem~10.15, p.~78]{Warner} which, by inspection, shows that the cited result still is valid more generally after the replacement of a jointly continuous bilinear map by a separately continuous bilinear map.}
\begin{equation}\label{eq:explicit_computation_of_fostarg}
\begin{array}{lll}
f\ostar g&=&\left (\sum_{x\in X}f(x)\delta_x^{\ring{R}}\right)\ostar \left (\sum_{y\in Y}g(y)\delta_y^{\ring{R}}\right)\\
&=&\sum_{(x,y)\in X\times Y}f(x)g(y)\delta_x^{\ring{R}}\ostar\delta_y^{\ring{R}}.\\
&&\mbox{(as a sum of a summable family)}
\end{array}
\end{equation}
For the same reason as above,  if $B$ is a topological basis of $(M,\sigma)$ and $D$ is a topological basis of $(N,\gamma)$, then $u\ostar v=\sum_{(b,d)\in B\times D}b'(u)d'(v)b\ostar d$, $u\in M$, $v\in N$. In particular, one observes  that $(b\ostar d)'(u\ostar v)=b'(u)d'(v)$, $b\in B$, $u\in M$, $d\in D$, and $v\in N$.
\end{remark}

\begin{proposition}\label{prop:top_basis_of_a_tensor_prod}
Let $(M,\sigma)$ and $(N,\gamma)$ be topologically-free $(\ring{R},\tau)$-modules, with respective topological bases $B,D$. Then, $(b\ostar d)_{(b,d)\in B\times D}$ is a topological basis of $(M,\sigma)\ostar_{(\ring{R},\tau)}(N,\gamma)$.
\end{proposition}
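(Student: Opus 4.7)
The plan is to identify the family $(b\ostar d)_{(b,d)\in B\times D}$ with the ``coefficient dual basis'' of a suitable linear basis of $(M,\sigma)'\otimes_{\ring{R}}(N,\gamma)'$, and then invoke Lemma~\ref{lem:dual_top_basis}.

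First, by Lemma~\ref{lem:dual_lin_basis} applied to the topological bases $B$ of $(M,\sigma)$ and $D$ of $(N,\gamma)$, the free $\ring{R}$-modules $(M,\sigma)'$ and $(N,\gamma)'$ admit the linear bases $B':=\{\, b'\colon b\in B\,\}$ and $D':=\{\, d'\colon d\in D\,\}$ respectively. By standard properties of the tensor product recalled in Section~\ref{sec:algebraic_tens_prod}, the free $\ring{R}$-module $(M,\sigma)'\otimes_{\ring{R}}(N,\gamma)'$ then admits $\{\, b'\otimes d'\colon (b,d)\in B\times D\,\}$ as a linear basis. Applying Lemma~\ref{lem:dual_top_basis} to this free module, the topological $(\ring{R},\tau)$-module $(M,\sigma)\ostar_{(\ring{R},\tau)}(N,\gamma)=Alg_{(\ring{R},\tau)}((M,\sigma)'\otimes_{\ring{R}}(N,\gamma)')$ is topologically-free, with topological basis the coefficient-map family $\{\, (b'\otimes d')^*\colon (b,d)\in B\times D\,\}$ attached to this linear basis (in the sense of Remark~\ref{rem:coefficient_maps}).

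It thus remains to verify the pointwise identity $b\ostar d=(b'\otimes d')^*$ for each $(b,d)\in B\times D$. Since both sides are $\ring{R}$-linear functionals on $(M,\sigma)'\otimes_{\ring{R}}(N,\gamma)'$, it suffices to evaluate them on the basis elements $e'\otimes f'$, $(e,f)\in B\times D$. On the one hand, the defining formula of $\ostar$ together with Remark~\ref{rem:top_coef_maps_as_delta} yields
\begin{equation}
(b\ostar d)(e'\otimes f')=e'(b)f'(d)=\delta_{b}(e)\delta_{d}(f),
\end{equation}
and on the other hand, by Remark~\ref{rem:coefficient_maps} applied to the basis $\{\, b'\otimes d'\,\}$,
\begin{equation}
(b'\otimes d')^*(e'\otimes f')=\delta_{b'\otimes d'}(e'\otimes f')=\delta_{b}(e)\delta_{d}(f),
\end{equation}
so the two functionals coincide. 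Consequently $\{\, b\ostar d\colon (b,d)\in B\times D\,\}$ is exactly the topological basis delivered by Lemma~\ref{lem:dual_top_basis}, which is the desired conclusion. The only step requiring some care is the last pointwise identification, but it is really a direct unwinding of the definitions; no delicate topological estimate is needed since all the topology is already packaged inside Lemma~\ref{lem:dual_top_basis}.
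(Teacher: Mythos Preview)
Your proof is correct. Both your argument and the paper's ultimately rest on Lemma~\ref{lem:dual_top_basis} (the algebraic dual of a free module carries a canonical topological basis), but the routes differ. The paper first treats the special case $(R,\tau)^X\ostar_{(\ring{R},\tau)}(R,\tau)^Y$ by tracking the basis $\{\delta_{(x,y)}^{\ring{R}}\}$ of $(R,\tau)^{X\times Y}$ through the explicit isomorphism chain of Remark~\ref{rem:explicit_form_of_a_member_of_the_tensor_product}, and then transports the result to general $(M,\sigma),(N,\gamma)$ via $\Theta_B\ostar_{(\ring{R},\tau)}\Theta_D$ and Lemma~\ref{lem:change_of_top_basis}. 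You instead work directly with the defining identity $(M,\sigma)\ostar_{(\ring{R},\tau)}(N,\gamma)=Alg_{(\ring{R},\tau)}((M,\sigma)'\otimes_{\ring{R}}(N,\gamma)')$: Lemma~\ref{lem:dual_lin_basis} supplies the linear basis $\{b'\otimes d'\}$ of the tensor product, Lemma~\ref{lem:dual_top_basis} then hands you the topological basis $\{(b'\otimes d')^*\}$, and a one-line evaluation on basis elements identifies this with $\{b\ostar d\}$. Your path is a bit shorter and bypasses the intermediate passage through $(R,\tau)^X$; the paper's two-step transport, on the other hand, makes more visibly concrete the identification of $\ostar$ with the product space $(R,\tau)^{X\times Y}$.
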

\begin{proof}
By virtue of Lemma~\ref{lem:change_of_top_basis} and Remark~\ref{rem:explicit_form_of_a_member_of_the_tensor_product}, $\{\, (\delta^{\ring{R}}_x\ostar\delta^{\ring{R}}_y)_{(x,y)\in X\times Y}\colon (x,y)\in X\times Y\,\}$  is a topological basis of  $(R,\tau)^X\ostar_{(\ring{R},\tau)}(R,\tau)^Y$  since one has $((\lambda_X^{-1}\otimes_{\ring{R}}\lambda_Y^{-1})^*(\Phi^*_{X,Y}(\rho_{X\times Y}(\delta_{(x,y)}^{\ring{R}}))))=\delta_x^{\ring{R}}\ostar\delta_y^{\ring{R}}$. Let us consider the isomorphism $\Theta_B\colon (M,\sigma)\simeq(R,\tau)^B$, $\Theta_B(b)=\delta_b$, $b\in B$ (resp. $\Theta_D\colon (N,\gamma)\simeq(R,\tau)^D$). By functoriality, $\Theta_{B}\ostar_{(\ring{R},\tau)}\Theta_D\colon (M,\sigma)\ostar_{(\ring{R},\tau)}(N,\gamma)\simeq (R,\tau)^{B}\ostar_{(\ring{R},\tau)}(R,\tau)^D$, and since  $(\theta_B\ostar_{(\ring{R},\tau)}\theta_{D})(b\ostar d)=\delta_b\ostar \delta_d$, $(b,d)\in B\times D$, $\{\, b\ostar d\colon (b,d)\in B\times D\,\}$ is a topological basis of $(M,\sigma)\ostar_{(\ring{R},\tau)}(N,\gamma)$.
\end{proof}

\begin{corollary}\label{cor:top_basis_of_a_tensor_prod}
Let $(M,\sigma)$ and $(N,\gamma)$ be topologically-free $(\ring{R},\tau)$-modules. Then, $\{\, u\ostar v\colon u\in M,\ v\in N\,\}$ spans a dense subspace in $(M,\sigma)\ostar_{(\ring{R},\tau)}(N,\gamma)$.
\end{corollary}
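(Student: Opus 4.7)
The plan is to deduce the corollary directly from Proposition~\ref{prop:top_basis_of_a_tensor_prod} combined with Lemma~\ref{lem:top_basis_is_linearly_independent_and_its_span_is_dense}. Since $(M,\sigma)$ and $(N,\gamma)$ are topologically-free, by Corollary~\ref{cor:top_basis} each admits a topological basis, say $B\subseteq M$ and $D\subseteq N$. Proposition~\ref{prop:top_basis_of_a_tensor_prod} then tells us that $\{\, b\ostar d\colon (b,d)\in B\times D\,\}$ is a topological basis of $(M,\sigma)\ostar_{(\ring{R},\tau)}(N,\gamma)$.

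Next, I would invoke Lemma~\ref{lem:top_basis_is_linearly_independent_and_its_span_is_dense}, which asserts that the linear span of any topological basis is dense in the ambient topologically-free module. Applied to the basis $\{\, b\ostar d\colon (b,d)\in B\times D\,\}$, this shows that $\langle\{\, b\ostar d\colon (b,d)\in B\times D\,\}\rangle$ is dense in $(M,\sigma)\ostar_{(\ring{R},\tau)}(N,\gamma)$.

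Finally, because $B\subseteq M$ and $D\subseteq N$, one has the trivial inclusion
\[
\{\, b\ostar d\colon (b,d)\in B\times D\,\}\subseteq \{\, u\ostar v\colon u\in M,\ v\in N\,\},
\]
so the linear span of the right-hand set contains the linear span of the left-hand set, which is already dense. Hence $\langle\{\, u\ostar v\colon u\in M,\ v\in N\,\}\rangle$ is itself dense in $(M,\sigma)\ostar_{(\ring{R},\tau)}(N,\gamma)$, which is the claim.

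There is essentially no obstacle here: the work has already been done in Proposition~\ref{prop:top_basis_of_a_tensor_prod} (producing the topological basis made of pure tensors) and Lemma~\ref{lem:top_basis_is_linearly_independent_and_its_span_is_dense} (density of the span of a topological basis). The corollary is just a monotonicity observation on linear spans, so the proof is a two-line chaining of those two earlier results.
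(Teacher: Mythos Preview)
Your proof is correct and is precisely the argument the paper has in mind: the paper's own proof is the single line ``It is clear in view of Lemma~\ref{lem:top_basis_is_linearly_independent_and_its_span_is_dense},'' which tacitly relies on the immediately preceding Proposition~\ref{prop:top_basis_of_a_tensor_prod} together with the monotonicity of spans that you made explicit.
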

\begin{proof}
It is clear in view of Lemma~\ref{lem:top_basis_is_linearly_independent_and_its_span_is_dense}.
\end{proof}


Let $(M_i,\sigma_i)$ and $(N_i,\gamma_i)$, $i=1,2$, be topologically-free $(\ring{R},\tau)$-modules. Let $(M_i,\sigma_i)\xrightarrow{f_i}(N_i,\gamma_i)$, $i=1,2$, be continuous $(\ring{R},\tau)$-linear maps. Let $(u,v)\in M_1\times M_2$. By Eq.~(\ref{eq:explicit_form_of_tens_prod_of_lin_maps}) it is clear that \begin{equation}\label{eq:otimesRtau_tensor}
(f_1\ostar_{(\ring{R},\tau)} f_2)(u\ostar v)=f_1(u)\ostar f_2(v).
\end{equation}
If $B$ and $D$ are topological bases of $(M_1,\sigma_1)$ and $(M_2,\sigma_2)$ respectively, $(f_1\ostar_{(\ring{R},\tau)} f_2)(u\ostar v)=\sum_{(b,d)\in B\times D}b'(u)d'(v)f_1(b)\ostar f_2(d)$ (in view of Remark~\ref{rem:ostar_on_sum}).
%
%
%

\subsection{Monoidality of $\ostar_{(\ring{R},\tau)}$ and its direct consequences}\label{sec:monoidality_of_ostar}


Since most of the proofs from this section mainly consist in rather tedious, but simple, inspections of commutativity of some diagrams, essentially by working with given topological or linear bases\footnote{E.g., associativity of $\ostar_{(\ring{R},\tau)}$ is given by the isomorphism $(b\ostar d)\ostar e\mapsto b\ostar (d\ostar e)$ on basis elements (Lemma~\ref{lem:top_iso_from_top_basis}).}, and because they did not provide much understanding, they are not included in the presentation. The reader is kindly invited to consult Appendix~\ref{appendix:moncat} where are summarized some notions and notations about monoidal category theory.


\begin{proposition}\label{prop:ostar_is_a_coherent_monoidal_functor}
Let $(\ring{R},\tau)$ be a rigid ring. $$\mathbb{TopFreeMod}_{(\ring{R},\tau)}:=(\mathbf{TopFreeMod}_{(\ring{R},\tau)},\ostar_{(\ring{R},\tau)},(R,\tau))$$\index{TopFreeMod2@$\mathbb{TopFreeMod}_{(\ring{R},\tau)}$} is a symmetric monoidal category\footnote{See~\cite{MacLane} for the definition of a symmetric monoidal category.}.
\end{proposition}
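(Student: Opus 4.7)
The plan is to transport the symmetric monoidal structure of $\mathbb{FreeMod}_{\ring{R}}^{\mathsf{op}}$ onto $\mathbf{TopFreeMod}_{(\ring{R},\tau)}$ along the dual equivalence of Theorem~\ref{thm:equiv_of_cats}. Since $\mathbb{FreeMod}_{\ring{R}} = (\mathbf{FreeMod}_{\ring{R}}, \otimes_{\ring{R}}, R)$ is already symmetric monoidal (Section~\ref{sec:algebraic_tens_prod}), so is $\mathbb{FreeMod}_{\ring{R}}^{\mathsf{op}}$, with the same tensor product on objects, the opposite action on morphisms, same unit $R$, and coherence constraints the inverses of those of $\mathbb{FreeMod}_{\ring{R}}$. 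Moreover, $\ostar_{(\ring{R},\tau)}$ was defined, essentially tautologically, as $Alg_{(\ring{R},\tau)} \circ \otimes_{\ring{R}}^{\mathsf{op}} \circ (Top_{(\ring{R},\tau)}^{\mathsf{op}} \times Top_{(\ring{R},\tau)}^{\mathsf{op}})$ --- exactly $\otimes_{\ring{R}}^{\mathsf{op}}$ transported along this equivalence --- so what remains is to produce the transported coherence data and check the axioms.

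The unit will be $I := (R,\tau) \simeq Alg_{(\ring{R},\tau)}(R)$ via Lemma~\ref{lem:rho_X_is_iso} applied to a one-point set. For the associator, unitors, and symmetry I would proceed in either of two equivalent ways. Abstractly, I would conjugate the corresponding constraints of $\otimes_{\ring{R}}^{\mathsf{op}}$ by components of the natural isomorphism $\Gamma$ (Proposition~\ref{lem:second_nat_iso}), which identifies the iterated $\ostar_{(\ring{R},\tau)}$-tensors with $Alg_{(\ring{R},\tau)}$ applied to iterated $\otimes_{\ring{R}}$-tensors of the topological duals. Concretely, and in line with the paper's preferred style, I would fix topological bases $B, D, E$ of $(M,\sigma), (N,\gamma), (P,\pi)$ (Proposition~\ref{prop:top_basis_of_a_tensor_prod}) and use Lemma~\ref{lem:top_iso_from_top_basis} to pin down each constraint by its values on basis vectors, e.g.\ $(b \ostar d) \ostar e \mapsto b \ostar (d \ostar e)$ for the associator and $b \ostar d \mapsto d \ostar b$ for the symmetry; naturality in each case then follows from Eq.~(\ref{eq:otimesRtau_tensor}).

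The pentagon, triangle and hexagons reduce to the corresponding identities in $\mathbb{FreeMod}_{\ring{R}}$, which hold because it is a monoidal subcategory of $\mathbb{Mod}_{\ring{R}}$. Under the transport viewpoint this is immediate from functoriality of $Alg_{(\ring{R},\tau)}$ and naturality of $\Gamma$, since the connecting $\Gamma$'s cancel out along each coherence diagram. In the basis formulation, both legs of each coherence diagram send a tuple of basis elements to the same rearranged tensor $b_1 \ostar \cdots \ostar b_n$, and the resulting equality of continuous $\ring{R}$-linear maps propagates from the chosen basis to the whole space by Corollary~\ref{cor:equality_of_conti_lin_map_on_top_basis}. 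The main obstacle is pure bookkeeping --- verifying that the two descriptions of the coherence constraints agree and that the basis-level prescriptions extend to well-defined continuous $\ring{R}$-linear isomorphisms --- which, as the author himself notes just above the proposition, consists in ``tedious but simple'' diagram chases; no conceptual obstruction is anticipated.
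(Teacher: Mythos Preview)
Your proposal is correct and matches the paper's own (omitted) approach: the author explicitly says the proof is left out because it is ``tedious but simple,'' and the one hint given --- that the associator is the isomorphism $(b\ostar d)\ostar e\mapsto b\ostar(d\ostar e)$ on basis elements via Lemma~\ref{lem:top_iso_from_top_basis} --- is exactly your concrete description. Your additional abstract viewpoint (transporting the monoidal structure along the equivalence via $\Gamma$) is a clean way to organize the bookkeeping, but it is not a genuinely different argument.
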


\begin{corollary}
For each field $(\mathbb{k},\tau)$ with a ring topology, $$\mathbb{TopFreeVect}_{(\mathbb{k},\tau)}:=(\mathbf{TopFreeVect}_{(\mathbb{k},\tau)},\ostar_{(\mathbb{k},\tau)},(\mathbb{k},\tau))$$\index{TopFreeVect2@$\mathbb{TopFreeVect}_{(\mathbb{k},\tau)}$} is a symmetric monoidal category.
\end{corollary}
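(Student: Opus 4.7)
The plan is to deduce this corollary directly from Proposition~\ref{prop:ostar_is_a_coherent_monoidal_functor}, whose conclusion is the same statement but stated for an arbitrary rigid ring rather than specifically for a field with a ring topology. What I need to do is essentially to observe that fields with a ring topology are special cases of rigid rings, and that under this specialization the notation collapses exactly to the one stated in the corollary.

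First I would invoke Lemma~\ref{lem:rigidity_of_field_with_ring_topology}, which guarantees that any field $(\mathbb{k},\tau)$ with a ring topology is rigid. With this in hand, Proposition~\ref{prop:ostar_is_a_coherent_monoidal_functor} applies verbatim with $(\ring{R},\tau)=(\mathbb{k},\tau)$, producing a symmetric monoidal structure on $\mathbf{TopFreeMod}_{(\mathbb{k},\tau)}$ with tensor bifunctor $\ostar_{(\mathbb{k},\tau)}$ and unit object $(R,\tau)=(\mathbb{k},\tau)$.

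Second, I would address the purely notational point that $\mathbf{TopFreeMod}_{(\mathbb{k},\tau)}$ coincides with $\mathbf{TopFreeVect}_{(\mathbb{k},\tau)}$ whenever $\mathbb{k}$ is a field: this is by definition, as recorded in the paragraph on topologically-free modules in Section~\ref{sec:recasting}. Under this identification, the coherence constraints (associator, left and right unitors, symmetry) of $\mathbb{TopFreeMod}_{(\mathbb{k},\tau)}$ restrict to those of $\mathbb{TopFreeVect}_{(\mathbb{k},\tau)}$, since the tensor bifunctor $\ostar_{(\mathbb{k},\tau)}$ is defined categorically via $Alg_{(\mathbb{k},\tau)}$, $Top_{(\mathbb{k},\tau)}^{\mathsf{op}}$ and $\otimes_{\mathbb{k}}$, all of which are insensitive to the relabeling $\mathbf{Mod}_{\mathbb{k}}=\mathbf{Vect}_{\mathbb{k}}$. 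There is no genuine obstacle here: the entire content is the specialization, and the only thing one has to be careful about is not to re-prove in the vector space setting what has already been shown at the level of rigid rings.
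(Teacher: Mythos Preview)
Your proposal is correct and matches the paper's approach: the corollary is stated without proof in the paper precisely because it is the immediate specialization of Proposition~\ref{prop:ostar_is_a_coherent_monoidal_functor} to the case of a field with a ring topology, using Lemma~\ref{lem:rigidity_of_field_with_ring_topology} to supply rigidity and the definitional identification $\mathbf{TopFreeMod}_{(\mathbb{k},\tau)}=\mathbf{TopFreeVect}_{(\mathbb{k},\tau)}$.
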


\begin{example}\label{ex:hadamard_monoid}
Let $(\ring{R},\tau)$ be a rigid ring. Let $X$ be a set. Let us define a commutative monoid $M_{(\ring{R},\tau)}(X):=((R,\tau)^{X},\mu_X,\eta_X)$ in $\mathbb{TopFreeMod}_{(\ring{R},\tau)}$ by $\mu_X(f\ostar g)=\sum_{x\in X}f(x)g(x)\delta_x^{\ring{R}}$, $f,g\in R^X$ (under $(R,\tau)^X\ostar_{(\ring{R},\tau)}(R,\tau)^X\simeq (R,\tau)^{X\times X}$ from Remark~\ref{rem:explicit_form_of_a_member_of_the_tensor_product}) and $\eta_X(1_{\ring{R}})=\sum_{x\in X}\delta_x^{\ring{R}}$\index{$\eta_X$}. This provides  a functor\index{M2@$M_{(\ring{R},\tau)}$} $\mathbf{Set}^{\mathsf{op}}\xrightarrow{M_{(\ring{R},\tau)}}{}_{c}\mathbf{Mon}(\mathbb{TopFreeMod}_{(\ring{R},\tau)})$. 
\end{example}
%

Let $(\ring{R},\tau)$ be a rigid ring.  For each free $\ring{R}$-modules $M,N$, let us define\index{$\Phi_{M,N}$} $\Phi_{M,N}:=(M^*,w^*_{(\ring{R},\tau)})\ostar_{(\ring{R},\tau)}(N^*,w^*_{(\ring{R},\tau)})=Alg_{(\ring{R},\tau)}((M^*,w^*_{(\ring{R},\tau)})'\otimes_{\ring{R}} (N^*,w^*_{(\ring{R},\tau)})')\xrightarrow{(\Lambda_M\otimes_{\ring{R}} \Lambda_N)^*}((M\otimes_{\ring{R}} N)^*,w^{*}_{(\ring{R},\tau)})$. According to Proposition~\ref{cor:first_nat_iso}, $\Phi_{M,N}$ is an isomorphism in $\mathbf{TopFreeMod}_{(\ring{R},\tau)}$. Naturality in $M,N$ is clear, so this provides a natural isomorphism 
\begin{equation}
\begin{array}{l}
\Phi\colon Alg_{(\ring{R},\tau)}(-)\ostar_{(\ring{R},\tau)}Alg_{(\ring{R},\tau)}(-)
\Rightarrow Alg_{(\ring{R},\tau)}(-\otimes_{\ring{R}}-)\colon\\
 \mathbf{FreeMod}_{\ring{R}}^{\mathsf{op}}\times\mathbf{FreeMod}_{\ring{R}}^{\mathsf{op}}\to \mathbf{TopFreeMod}_{(\ring{R},\tau)}.
\end{array}
\end{equation}
Furthermore, let us consider the isomorphism $(R,\tau)\xrightarrow{\phi}(R^*,w_{(\ring{R},\tau)}^*)$\index{$\phi$} given by $\phi(1_{\ring{R}}):=id_{R}$, with inverse $\phi^{-1}(\ell)=\ell(1_{\ring{R}})$.

Let $(M,\sigma),(N,\gamma)$ be topologically-free $(\ring{R},\tau)$-modules. One defines the map\index{$\Psi_{(M,\sigma),(N,\gamma)}$} $\Psi_{(M,\sigma),(N,\gamma)}:=(M,\sigma)'\otimes_{\ring{R}}(N,\gamma)'
\xrightarrow{\Lambda_{(M,\sigma)'\otimes_{\ring{R}}(N,\gamma)'}}(Alg_{(\ring{R},\tau)}((M,\sigma)'\otimes_{\ring{R}}(N,\gamma)'))'=((M,\sigma)\ostar_{(\ring{R},\tau)}(N,\gamma))'$. This gives rise to a natural isomorphism 
\begin{equation}
\begin{array}{l}
\Psi\colon Top_{(\ring{R},\tau)}(-)\otimes_{\ring{R}} Top_{(\ring{R},\tau)}(-)\Rightarrow Top_{(\ring{R},\tau)}(-\ostar_{(\ring{R},\tau)}-) \colon \\
\mathbf{TopFreeMod}_{(\ring{R},\tau)}^{\mathsf{op}}\times \mathbf{TopFreeMod}_{(\ring{R},\tau)}^{\mathsf{op}}\to\mathbf{FreeMod}_{\ring{R}}.\end{array}\end{equation}
Let also $R\xrightarrow{\psi}(R,\tau)'$\index{$\psi$} be given by $\psi(1_{\ring{R}})=id_{R}$ and $\psi^{-1}(\ell)=\ell(1_{\ring{R}})$.

\begin{theorem}\label{thm:monoidal_equivalence}
Let $(\ring{R},\tau)$ be a rigid ring. 
\begin{enumerate}
\item $\mathbb{Alg}_{(\ring{R},\tau)}=(Alg_{(\ring{R},\tau)},\Phi,\phi)\colon \mathbb{FreeMod}_{\ring{R}}^{\mathbb{op}}\to\mathbb{TopFreeMod}_{(\ring{R},\tau)}$\index{Alg2@$\mathbb{Alg}_{(\ring{R},\tau)}$} is a strong symmetric monoidal functor.
\item $\mathbb{Top}_{(\ring{R},\tau)}=(Top_{(\ring{R},\tau)},\Psi,\psi)$\index{Top2@$\mathbb{Top}_{(\ring{R},\tau)}$} is a strong symmetric monoidal functor from $\mathbb{TopFreeMod}^{\mathbb{op}}_{(\ring{R},\tau)}$ to $\mathbb{FreeMod}_{\ring{R}}$, so is $\mathbb{Top}_{(\ring{R},\tau)}^{\mathbb{d}}$ (Remark~\ref{rem:induced_functor_symmetric_case}, Appendix~\ref{appendix:moncat:mon_fun_and_ind_fun})  from $\mathbb{TopFreeMod}_{(\ring{R},\tau)}$ to $\mathbb{FreeMod}^{\mathbb{op}}_{\ring{R}}$. 
\item $\Lambda^{\mathsf{op}}\colon \mathbb{Top}_{(\ring{R},\tau)}^{\mathbb{d}}\circ \mathbb{Alg}_{(\ring{R},\tau)}\Rightarrow \mathbb{id}\colon \mathbb{FreeMod}_{\ring{R}}^{\mathbb{op}}\to\mathbb{FreeMod}_{\ring{R}}^{\mathbb{op}}$ is a monoidal isomorphism. 
\item $\Gamma\colon \mathbb{id}\Rightarrow \mathbb{Alg}_{(\ring{R},\tau)}\circ \mathbb{Top}^{\mathbb{d}}_{(\ring{R},\tau)}\colon \mathbb{TopFreeMod}_{(\ring{R},\tau)}\to\mathbb{TopFreeMod}_{(\ring{R},\tau)}$ is a monoidal isomorphism.
\end{enumerate}
In particular, $\mathbb{FreeMod}_{\ring{R}}^{\mathbb{op}}$ and $\mathbb{TopFreeMod}_{(\ring{R},\tau)}$ are monoidally equivalent (Appendix~\ref{appendix:moncat:mon_fun_and_ind_fun}).
\end{theorem}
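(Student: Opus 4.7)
My plan is to handle the four items in order, reducing each one to a coherence check that can be performed on topological (or algebraic) basis elements, and then to combine them by invoking a general fact from the theory of monoidal equivalences.

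First, for (1), I would verify that $(Alg_{(\ring{R},\tau)},\Phi,\phi)$ is a strong symmetric monoidal functor on $\mathbb{FreeMod}_{\ring{R}}^{\mathbb{op}}$. The invertibility of each $\Phi_{M,N}$ has already been established (as a consequence of Proposition~\ref{cor:first_nat_iso}) and $\phi$ is patently an isomorphism. So the task reduces to checking the associativity hexagon, the two unit triangles, and the symmetry square for $\otimes_{\ring{R}}^{\mathsf{op}}$ versus $\ostar_{(\ring{R},\tau)}$. By Corollary~\ref{cor:equality_of_conti_lin_map_on_top_basis}, any equality of continuous $\ring{R}$-linear maps between topologically-free modules may be verified on a topological basis. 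Applying this with the topological basis $B^{*}$ of $(M^{*},w^{*}_{(\ring{R},\tau)})$ (Lemma~\ref{lem:dual_top_basis}) and using the explicit description of $b\ostar d$ together with $\Theta_{M,N}(b^{*}\otimes d^{*})=(b\otimes d)^{*}$, every coherence diagram collapses to the identity $(b\otimes d)^{*}\mapsto b^{*}\ostar d^{*}$, which is tautologically compatible with associator, unitors, and symmetry of $\otimes_{\ring{R}}$.

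Next, for (2), the natural isomorphism $\Psi_{(M,\sigma),(N,\gamma)}$ is an isomorphism because $\Lambda_{(M,\sigma)'\otimes_{\ring{R}}(N,\gamma)'}$ is one by Proposition~\ref{cor:first_nat_iso} (since $(M,\sigma)'\otimes_{\ring{R}}(N,\gamma)'$ is free, by Lemma~\ref{lem:dual_lin_basis} and Section~\ref{sec:algebraic_tens_prod}). The coherence diagrams are then checked on pairs $(b',d')$ arising from topological bases $B,D$ of $(M,\sigma),(N,\gamma)$: Remark~\ref{rem:ostar_on_sum} gives $(b\ostar d)'(u\ostar v)=b'(u)d'(v)$, so that $\Psi_{(M,\sigma),(N,\gamma)}(b'\otimes d')=(b\ostar d)'$, and once again all coherence squares reduce to the evident bijection $(b\ostar d)'\leftrightarrow b'\otimes d'$ between topological and algebraic basis elements. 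The assertion about $\mathbb{Top}_{(\ring{R},\tau)}^{\mathbb{d}}$ is then formal, using the dualization recipe from Appendix~\ref{appendix:moncat:mon_fun_and_ind_fun} applied in the symmetric setting.

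For (3) and (4), I would check monoidality of $\Lambda$ and $\Gamma$, which amounts to two commutative squares each (one involving tensor, one involving the unit). Under our identifications, the tensor square for $\Lambda$ reads, on a generator $b\otimes d\in M\otimes_{\ring{R}} N$ (with $B,D$ bases), as the equality $\Lambda_{M\otimes_{\ring{R}} N}(b\otimes d)$ evaluated on $(b^{*}\ostar d^{*})'$ equals $(\Lambda_{M}(b)\otimes\Lambda_{N}(d))$ transported along $\Phi$ and $\Psi$; both sides send $b^{*}\otimes d^{*}\mapsto 1_{\ring{R}}$ and kill the other basis elements, so they agree. The corresponding verification for $\Gamma$ is entirely analogous, using the explicit formula $u\ostar v$ obeys $\ell_{1}\otimes\ell_{2}\mapsto \ell_{1}(u)\ell_{2}(v)$ and the fact established in the proof of Proposition~\ref{lem:second_nat_iso} that $\Gamma_{(R,\tau)^{X}}=(\lambda_{X}^{-1})^{*}\circ\rho_{X}$. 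The unit squares for $\Lambda$ and $\Gamma$ reduce, via $\phi$ and $\psi$, to the tautology $1_{\ring{R}}\mapsto id_{R}$.

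Finally, having (1)--(4), the conclusion $\mathbb{FreeMod}_{\ring{R}}^{\mathbb{op}}\simeq \mathbb{TopFreeMod}_{(\ring{R},\tau)}$ as monoidal categories is immediate: by the general theory recalled in Appendix~\ref{appendix:moncat:mon_fun_and_ind_fun}, a strong symmetric monoidal functor whose underlying functor is an equivalence, and whose unit/counit are monoidal natural isomorphisms, is automatically a monoidal equivalence. The main obstacle is purely bureaucratic: keeping track of the various identifications $\Phi_{M,N}$, $\Psi_{(M,\sigma),(N,\gamma)}$, $\rho_{X}$, $\lambda_{X}$, $\Lambda$ and $\Gamma$ so that the diagrams are written with the correct orientations once one passes to opposite categories (in particular, the fact that on the opposite side $\otimes_{\ring{R}}^{\mathsf{op}}$ becomes the monoidal product). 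Once this bookkeeping is set up cleanly, each individual verification really is a one-line check on topological-basis elements.
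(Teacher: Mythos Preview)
Your proposal is correct and follows precisely the approach the paper itself indicates: as stated at the start of Section~\ref{sec:monoidality_of_ostar}, the paper deliberately omits the proof, remarking only that it consists of tedious but simple inspections of diagram commutativity ``essentially by working with given topological or linear bases,'' which is exactly the strategy you outline. Your sketch fills in the details the paper suppresses, with the right tools (Corollary~\ref{cor:equality_of_conti_lin_map_on_top_basis}, Lemma~\ref{lem:dual_top_basis}, Remark~\ref{rem:ostar_on_sum}) invoked at the right places.
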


\begin{corollary}
For each field $(\mathbb{k},\tau)$ with a ring topology, the monoidal categories $\mathbb{Vect}^{\mathbb{op}}_{\mathbb{k}}$ and $\mathbb{TopFreeVect}_{(\mathbb{k},\tau)}$ are monoidally equivalent.
\end{corollary}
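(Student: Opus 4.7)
The plan is to observe that this corollary is a direct specialization of Theorem~\ref{thm:monoidal_equivalence} to the case where the rigid ring is actually a field. First, I would invoke Lemma~\ref{lem:rigidity_of_field_with_ring_topology}, which guarantees that any field $(\mathbb{k},\tau)$ with a ring topology is automatically rigid; this ensures that the hypotheses of Theorem~\ref{thm:monoidal_equivalence} are satisfied with $(\ring{R},\tau)$ taken to be $(\mathbb{k},\tau)$.

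Next, I would identify the relevant categories. As noted in the paper (in the comment preceding Example~\ref{ex:p_x}), when $\mathbb{k}$ is a field one has $\mathbf{FreeMod}_{\mathbb{k}} = \mathbf{Vect}_{\mathbb{k}}$, because every $\mathbb{k}$-vector space admits a basis. Since the algebraic tensor product $\otimes_{\mathbb{k}}$ and the unit $\mathbb{k}$ agree in both descriptions, this identification extends to the symmetric monoidal categories: $\mathbb{FreeMod}_{\mathbb{k}} = \mathbb{Vect}_{\mathbb{k}}$. On the topological side, $\mathbf{TopFreeVect}_{(\mathbb{k},\tau)}$ is by definition the same full subcategory of $\mathbf{TopVect}_{(\mathbb{k},\tau)} = \mathbf{TopMod}_{(\mathbb{k},\tau)}$ as $\mathbf{TopFreeMod}_{(\mathbb{k},\tau)}$, and the bifunctor $\ostar_{(\mathbb{k},\tau)}$ together with the unit $(\mathbb{k},\tau)$ yields the same symmetric monoidal structure, so $\mathbb{TopFreeMod}_{(\mathbb{k},\tau)} = \mathbb{TopFreeVect}_{(\mathbb{k},\tau)}$.

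With these identifications in place, the final clause of Theorem~\ref{thm:monoidal_equivalence} directly yields a monoidal equivalence $\mathbb{FreeMod}_{\mathbb{k}}^{\mathbb{op}} \simeq \mathbb{TopFreeMod}_{(\mathbb{k},\tau)}$, which under the above identifications is precisely the monoidal equivalence $\mathbb{Vect}_{\mathbb{k}}^{\mathbb{op}} \simeq \mathbb{TopFreeVect}_{(\mathbb{k},\tau)}$. Explicitly, the equivalence is witnessed by the strong symmetric monoidal functors $\mathbb{Alg}_{(\mathbb{k},\tau)}$ and $\mathbb{Top}^{\mathbb{d}}_{(\mathbb{k},\tau)}$, together with the monoidal natural isomorphisms $\Lambda^{\mathsf{op}}$ and $\Gamma$ furnished by Theorem~\ref{thm:monoidal_equivalence}. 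There is no real obstacle here; the only thing to double-check is that the notational identifications (between $\mathbf{FreeMod}_{\mathbb{k}}$ and $\mathbf{Vect}_{\mathbb{k}}$, and between $\mathbf{TopFreeMod}_{(\mathbb{k},\tau)}$ and $\mathbf{TopFreeVect}_{(\mathbb{k},\tau)}$) respect all of the monoidal data — unit object, tensor bifunctor, associator, unitors and symmetry — which is immediate from how those categories were defined.
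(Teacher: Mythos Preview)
Your proposal is correct and matches the paper's approach: the paper states this corollary without proof, immediately after Theorem~\ref{thm:monoidal_equivalence}, so it is meant as a direct specialization to fields via Lemma~\ref{lem:rigidity_of_field_with_ring_topology} and the identifications $\mathbf{FreeMod}_{\mathbb{k}}=\mathbf{Vect}_{\mathbb{k}}$ and $\mathbf{TopFreeMod}_{(\mathbb{k},\tau)}=\mathbf{TopFreeVect}_{(\mathbb{k},\tau)}$, exactly as you describe.
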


\begin{corollary}\label{cor:induced_equivalence_between_top_mon_and_coalgebras}
For each rigid ring $(\ring{R},\tau)$, the induced natural transformations (see Remark~\ref{rem:induced_nat_transfo} in Appendix~\ref{appendix:moncat:mon_fun_and_ind_fun} and Example~\ref{ex:cats_of_mon} in Appendix~\ref{appendix:mon_and_comon})
\begin{itemize}
\item $\widetilde{\Lambda^{\mathsf{op}}}\colon \widetilde{(\mathbb{Top}^{\mathbb{d}}_{(\ring{R},\tau)})}\circ \widetilde{\mathbb{Alg}_{(\ring{R},\tau)}}\Rightarrow id_{{}_{\epsilon}\mathbf{Coalg}_{\ring{R}}^{\mathsf{op}}}\colon {}_{\epsilon}\mathbf{Coalg}_{\ring{R}}^{\mathsf{op}}\to {}_{\epsilon}\mathbf{Coalg}_{\ring{R}}^{\mathsf{op}}$\index{$\widetilde{\Lambda^{\mathsf{op}}}$},
\item $\widetilde{\Lambda^{\mathsf{op}}}\colon \widetilde{(\mathbb{Top}^{\mathbb{d}}_{(\ring{R},\tau)})}\circ \widetilde{\mathbb{Alg}_{(\ring{R},\tau)}}\Rightarrow id_{{}_{\epsilon,coc}\mathbf{Coalg}_{\ring{R}}^{\mathsf{op}}}\colon {}_{\epsilon,coc}\mathbf{Coalg}_{\ring{R}}^{\mathsf{op}}\to {}_{\epsilon,coc}\mathbf{Coalg}_{\ring{R}}^{\mathsf{op}}$\index{Alg3@$\widetilde{\mathbb{Alg}_{(\ring{R},\tau)}}$},
\item $\tilde{\Gamma}\colon id_{\mathbf{Mon}(\mathbb{TopFreeMod}_{(\ring{R},\tau)})}\Rightarrow \widetilde{\mathbb{Alg}_{(\ring{R},\tau)}}\circ \widetilde{(\mathbb{Top}^{\mathbb{d}}_{(\ring{R},\tau)})}\colon\\ \mathbf{Mon}(\mathbb{TopFreeMod}_{(\ring{R},\tau)})\to \mathbf{Mon}(\mathbb{TopFreeMod}_{(\ring{R},\tau)})$\index{$\tilde{\Gamma}$},
\item $\tilde{\Gamma}\colon id_{{}_c\mathbf{Mon}(\mathbb{TopFreeMod}_{(\ring{R},\tau)})}\Rightarrow \widetilde{\mathbb{Alg}_{(\ring{R},\tau)}}\circ \widetilde{(\mathbb{Top}^{\mathbb{d}}_{(\ring{R},\tau)})}\colon\\ {}_c\mathbf{Mon}(\mathbb{TopFreeMod}_{(\ring{R},\tau)})\to {}_c\mathbf{Mon}(\mathbb{TopFreeMod}_{(\ring{R},\tau)})$\index{Top3@$\widetilde{(\mathbb{Top}^{\mathbb{d}}_{(\ring{R},\tau)})^{\mathbb{op}}}$}.
\end{itemize}
are all natural isomorphisms. 

Thus  ${}_{\epsilon}\mathbf{Coalg}_{\ring{R}}^{\mathsf{op}}$ (resp., ${}_{\epsilon,coc}\mathbf{Coalg}_{\ring{R}}^{\mathsf{op}}$) and $\mathbf{Mon}(\mathbb{TopFreeMod}_{(\ring{R},\tau)})$ (resp.,  ${}_c\mathbf{Mon}(\mathbb{TopFreeMod}_{(\ring{R},\tau)})$) are equivalent for each rigid topology $\tau$ on $\ring{R}$.

In particular,   $\mathbf{Mon}(\mathbb{TopFreeMod}_{(\ring{R},\tau)})\simeq \mathbf{Mon}(\mathbb{TopFreeMod}_{(\ring{R},\sigma)})$ (resp., ${}_c\mathbf{Mon}(\mathbb{TopFreeMod}_{(\ring{R},\tau)})\simeq {}_c\mathbf{Mon}(\mathbb{TopFreeMod}_{(\ring{R},\sigma)})$), for each rigid topologies $\tau,\sigma$ on $\ring{R}$.
\end{corollary}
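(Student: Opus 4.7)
The plan is to derive this corollary as a formal consequence of Theorem~\ref{thm:monoidal_equivalence} combined with the general categorical machinery summarized in Appendices~\ref{appendix:moncat} and~\ref{appendix:mon_and_comon}. The guiding principle is that a monoidal equivalence between symmetric monoidal categories lifts to an equivalence between their categories of (commutative) monoids, and, by passage to opposite monoidal categories, between monoids on one side and (cocommutative, counital) comonoids on the other.

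First, I would invoke the appendix: every strong (symmetric) monoidal functor $\mathbb{F}$ induces a functor $\widetilde{\mathbb{F}}$ between (commutative) monoid categories, and every monoidal natural transformation $\alpha\colon \mathbb{F}\Rightarrow\mathbb{G}$ induces a natural transformation $\tilde{\alpha}\colon \widetilde{\mathbb{F}}\Rightarrow\widetilde{\mathbb{G}}$ whose component at a monoid $(A,\mu,\eta)$ is the morphism of monoids underlying $\alpha_A$. Parts (1) and (2) of Theorem~\ref{thm:monoidal_equivalence} provide the strong symmetric monoidal functors $\mathbb{Alg}_{(\ring{R},\tau)}$ and $\mathbb{Top}^{\mathbb{d}}_{(\ring{R},\tau)}$, while parts (3) and (4) provide the monoidal natural isomorphisms $\Lambda^{\mathsf{op}}$ and $\Gamma$; together these make the four natural transformations in the statement well defined on the four relevant monoid categories.

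Second, to see that each is a natural \emph{iso}morphism, I would rely on the elementary fact that a morphism of monoids in a monoidal category $\mathbb{C}$ is invertible in $\mathbf{Mon}(\mathbb{C})$ if and only if its underlying $\mathbf{C}$-morphism is invertible, since the inverse then automatically respects the multiplication and unit. Parts (3) and (4) of Theorem~\ref{thm:monoidal_equivalence} already assert that $\Lambda^{\mathsf{op}}$ and $\Gamma$ have components which are isomorphisms in $\mathbf{FreeMod}_{\ring{R}}^{\mathsf{op}}$ and $\mathbf{TopFreeMod}_{(\ring{R},\tau)}$ respectively, so the induced $\widetilde{\Lambda^{\mathsf{op}}}$ and $\tilde{\Gamma}$ have invertible components, hence are natural isomorphisms; restriction to the (co)commutative sub-settings is immediate because the induced functors and natural transformations restrict, $\mathbb{Alg}_{(\ring{R},\tau)}$ and $\mathbb{Top}^{\mathbb{d}}_{(\ring{R},\tau)}$ being \emph{symmetric} monoidal.

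Finally, the equivalences of categories follow from Example~\ref{ex:cats_of_mon}, which identifies monoids in $\mathbb{FreeMod}^{\mathbb{op}}_{\ring{R}}$ with objects of ${}_{\epsilon}\mathbf{Coalg}_{\ring{R}}^{\mathsf{op}}$ and commutative monoids with cocommutative counital coalgebras. The ``in particular'' clause is then immediate: for two rigid topologies $\tau,\sigma$ on $\ring{R}$ both categories $\mathbf{Mon}(\mathbb{TopFreeMod}_{(\ring{R},\tau)})$ and $\mathbf{Mon}(\mathbb{TopFreeMod}_{(\ring{R},\sigma)})$ are equivalent to the same category ${}_\epsilon\mathbf{Coalg}_{\ring{R}}^{\mathsf{op}}$, and composing one equivalence with an equivalence inverse of the other yields the asserted equivalence; mutatis mutandis for the commutative variants. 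I expect no genuine obstacle here: the substantial content lies in Theorem~\ref{thm:monoidal_equivalence}, and what remains is a careful bookkeeping exercise verifying that the appendix machinery is being applied correctly and that the identifications between monoids-in-an-opposite-category and counital coalgebras preserve (co)commutativity.
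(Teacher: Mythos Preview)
Your proposal is correct and follows essentially the same route as the paper, which dispatches the corollary in one line by invoking Theorem~\ref{thm:monoidal_equivalence} together with Remarks~\ref{rem:induced_nat_transfo} and~\ref{rem:inv_of_mon_iso_is_mon_iso} from the appendix. Your argument simply unpacks what those remarks say (induced functors and transformations on monoid categories, and that monoidal isomorphisms induce natural isomorphisms), so the content is identical.
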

\begin{proof}
Follows from Theorem~\ref{thm:monoidal_equivalence} together with Remarks~\ref{rem:induced_nat_transfo} and~\ref{rem:inv_of_mon_iso_is_mon_iso} in Appendix~\ref{appendix:moncat:mon_fun_and_ind_fun}.
\end{proof}

\begin{example}\label{ex:explicit_calculus_of_tilde_Gamma}
Let us make explicit the domain and codomain of the natural isomorphism $\tilde{\Gamma}$ from Corollary~\ref{cor:induced_equivalence_between_top_mon_and_coalgebras}. 

Let $((M,\sigma),m,e)$ be an object of $\mathbf{Mon}(\mathbb{TopFreeMod}_{(\ring{R},\tau)})$. Let its {\em topological dual coalgebra}\index{Topological dual coalgebra} be
\begin{equation}
\begin{array}{lll}
((M,\sigma)',\delta,\epsilon)&:=&\widetilde{(\mathbb{Top}^{\mathbb{d}}_{(\ring{R},\tau)})}((M,\sigma),m,e)\\
&=&((M,\sigma)',\Lambda^{-1}_{(M,\sigma)'\otimes_{\ring{R}}(M,\sigma)'}\circ m',\psi^{-1}\circ e').
\end{array}
\end{equation} In details, for $\ell\in (M,\sigma)'$, $\epsilon(\ell)=\psi^{-1}(e'(\ell))=\psi^{-1}(\ell\circ e)=\ell(e(1_{\ring{R}}))$,  and $\delta(\ell)=((\Lambda_{(M,\sigma)'\otimes_{\ring{R}}(M,\sigma)'}^{-1}\circ m')(\ell))=\sum_{i=1}^{n}\ell_i\otimes r_i$ for some $\ell_i,r_i\in (M,\sigma)'$. Thus, $\ell(m(u\ostar v))=\sum_{i=1}^{n}\ell_i(u)r_i(v)$, $u,v\in M$.

Now, $(((M,\sigma)')^*,M,E):=\widetilde{\mathbb{Alg}_{(\ring{R},\tau)}}((M,\sigma)',\delta,\epsilon)$ is given by $M:=\delta^*\circ (\Lambda_{(M,\sigma)'}\otimes_{\ring{R}}\Lambda_{(M,\sigma)'})^*$ and $E:=\epsilon^*\circ \phi$. Thus $E(1_{\ring{R}})=\epsilon^*(\phi(1_{\ring{R}}))=\epsilon^*(id_R)=\epsilon$, and given $L_1,L_2\in ((M,\sigma)')^*$, and $\ell\in (M,\sigma)'$, 
\begin{equation}
\begin{array}{lll}
(M(L_1\ostar L_2))(\ell)&=&(\delta^*((\Lambda_{(M,\sigma)'}\otimes_{\ring{R}}\Lambda_{(M,\sigma)'})^*(L_1\ostar L_2)))(\ell)\\
&=&((\Lambda_{(M,\sigma)'}\otimes_{\ring{R}}\Lambda_{(M,\sigma)'})^*(L_1\ostar L_2))(\delta(\ell))\\
&=&(L_1\ostar L_2)((\Lambda_{(M,\sigma)'}\otimes_{\ring{R}}\Lambda_{(M,\sigma)'})(\delta(\ell)))\\
&=&(L_1\ostar L_2)(\sum_{i=1}^n\Lambda_{(M,\sigma)'}(\ell_i)\otimes\Lambda_{(M,\sigma)'}(r_i))\\
&=&\sum_{i=1}^n\Lambda_{(M,\sigma)'}(\ell_i)(L_1)\Lambda_{(M,\sigma)'}(r_i)(L_2)\\
&=&\sum_{i=1}^n L_1(\ell_i)L_2(r_i).
\end{array}
\end{equation}
\end{example}

\begin{corollary}
The equivalence from Corollary~\ref{cor:induced_equivalence_between_top_mon_and_coalgebras} restricts to an equivalence between the category ${}_{\epsilon}\mathbf{FinDimCoalg}_{\mathbb{k}}$ (resp. ${}_{\epsilon,coc}\mathbf{FinDimCoalg}_{\mathbb{k}}$) of finite-dimensional (resp. cocommutative) coalgebras and  $\mathbf{Mon}(\mathbb{FinDimVect}_{\mathbb{k}})$ (resp. ${}_c\mathbf{Mon}(\mathbb{FinDimVect}_{\mathbb{k}})$), where $\mathbb{FinDimVect}_{\mathbb{k}}=(\mathbf{FinDimVect}_{\mathbb{k}},\otimes_{\mathbb{k}},\mathbb{k})$. 
\end{corollary}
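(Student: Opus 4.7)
My plan is to show that under the discrete topology $\mathsf{d}$ on $\mathbb{k}$, the monoidal equivalence of Theorem~\ref{thm:monoidal_equivalence} (together with its induced equivalence from Corollary~\ref{cor:induced_equivalence_between_top_mon_and_coalgebras}) restricts to a monoidal equivalence between $\mathbb{FinDimVect}_{\mathbb{k}}^{\mathbb{op}}$ and the full monoidal subcategory of $\mathbb{TopFreeVect}_{(\mathbb{k},\mathsf{d})}$ spanned by finite-dimensional discrete vector spaces; the conclusion then follows by applying $\mathbf{Mon}(-)$ (resp. ${}_c\mathbf{Mon}(-)$) and Example~\ref{ex:cats_of_mon}, exactly as in the proof of Corollary~\ref{cor:induced_equivalence_between_top_mon_and_coalgebras}.

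First I would identify the relevant subcategories. As noted in the paragraph ``Finite-dimensional vector spaces'' preceding the present statement, whenever $(M,\sigma)$ is a topologically-free $(\mathbb{k},\mathsf{d})$-vector space with $M$ finite-dimensional, $\sigma$ is forced to be discrete, so $(M,\sigma)'=M^*$ coincides with the algebraic dual. Hence $\mathbf{FinDimVect}_{\mathbb{k}}$ embeds (fully) both into $\mathbf{Vect}_{\mathbb{k}}$ and, via the discrete topology, into $\mathbf{TopFreeVect}_{(\mathbb{k},\mathsf{d})}$. Using Lemma~\ref{lem:dual_of_free_is_top_free_mod}, if $M$ is finite-dimensional with basis $B$ then $(M^*,w^*_{(\mathbb{k},\mathsf{d})})\simeq (\mathbb{k},\mathsf{d})^B$ which, being a \emph{finite} power of a discrete space, is itself a finite-dimensional discrete $\mathbb{k}$-vector space. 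Thus $Alg_{(\mathbb{k},\mathsf{d})}$ sends $\mathbf{FinDimVect}_{\mathbb{k}}^{\mathsf{op}}$ into $\mathbf{FinDimVect}_{\mathbb{k}}$; dually, $Top_{(\mathbb{k},\mathsf{d})}$ sends finite-dimensional (discrete) topologically-free vector spaces back to finite-dimensional ones. The natural isomorphisms $\Lambda$ and $\Gamma$ of Corollary~\ref{cor:first_nat_iso-for-fields} and Corollary~\ref{cor:second_nat_iso} restrict without change, yielding an ordinary dual equivalence $\mathbf{FinDimVect}_{\mathbb{k}}^{\mathsf{op}}\simeq \mathbf{FinDimVect}_{\mathbb{k}}$.

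Next I would check compatibility with the tensor products. Since $M\otimes_{\mathbb{k}} N$ is finite-dimensional whenever $M,N$ are, $(\mathbf{FinDimVect}_{\mathbb{k}},\otimes_{\mathbb{k}},\mathbb{k})$ is a symmetric monoidal subcategory of $\mathbb{Vect}_{\mathbb{k}}$. On the topological side, for finite-dimensional $(M,\mathsf{d}),(N,\mathsf{d})$ one has by definition $(M,\mathsf{d})\ostar_{(\mathbb{k},\mathsf{d})}(N,\mathsf{d})=Alg_{(\mathbb{k},\mathsf{d})}(M^*\otimes_{\mathbb{k}} N^*)$; since $M^*\otimes_{\mathbb{k}} N^*$ is finite-dimensional, the previous paragraph shows that this topological tensor product is again a finite-dimensional discrete vector space, and Proposition~\ref{prop:top_basis_of_a_tensor_prod} combined with $\dim (M\otimes_{\mathbb{k}} N)=\dim M\cdot\dim N$ gives a canonical identification $(M,\mathsf{d})\ostar_{(\mathbb{k},\mathsf{d})}(N,\mathsf{d})\simeq (M\otimes_{\mathbb{k}} N,\mathsf{d})$ through which $\ostar_{(\mathbb{k},\mathsf{d})}$ restricts (up to coherent isomorphism) to $\otimes_{\mathbb{k}}$ on $\mathbf{FinDimVect}_{\mathbb{k}}$. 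Consequently $\mathbb{FinDimVect}_{\mathbb{k}}$ is a symmetric monoidal subcategory of $\mathbb{TopFreeVect}_{(\mathbb{k},\mathsf{d})}$ as well, and the structural isomorphisms $\Phi,\phi,\Psi,\psi$ of Theorem~\ref{thm:monoidal_equivalence} restrict to give strong symmetric monoidal functors $\mathbb{Alg}_{(\mathbb{k},\mathsf{d})}$, $\mathbb{Top}^{\mathbb{d}}_{(\mathbb{k},\mathsf{d})}$ between $\mathbb{FinDimVect}_{\mathbb{k}}^{\mathbb{op}}$ and $\mathbb{FinDimVect}_{\mathbb{k}}$, with monoidal unit/counit $\Lambda^{\mathsf{op}},\Gamma$ still natural isomorphisms there. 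Thus the monoidal equivalence of Theorem~\ref{thm:monoidal_equivalence} restricts to a (self-)monoidal equivalence $\mathbb{FinDimVect}_{\mathbb{k}}^{\mathbb{op}}\simeq \mathbb{FinDimVect}_{\mathbb{k}}$.

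Finally, applying the (symmetric) monoid-building 2-functor $\mathbf{Mon}(-)$ (resp. ${}_c\mathbf{Mon}(-)$) to this restricted monoidal equivalence, just as in the proof of Corollary~\ref{cor:induced_equivalence_between_top_mon_and_coalgebras} (and using the identification $\mathbf{Mon}(\mathbb{FinDimVect}_{\mathbb{k}}^{\mathbb{op}})={}_{\epsilon}\mathbf{FinDimCoalg}_{\mathbb{k}}^{\mathsf{op}}$ from Example~\ref{ex:cats_of_mon}), yields the desired equivalences ${}_{\epsilon}\mathbf{FinDimCoalg}_{\mathbb{k}}^{\mathsf{op}}\simeq \mathbf{Mon}(\mathbb{FinDimVect}_{\mathbb{k}})$ and ${}_{\epsilon,coc}\mathbf{FinDimCoalg}_{\mathbb{k}}^{\mathsf{op}}\simeq {}_c\mathbf{Mon}(\mathbb{FinDimVect}_{\mathbb{k}})$. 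The only subtle point --- and the main (if mild) obstacle --- is verifying that the weak-$*$ topology on the dual of a finite-dimensional vector space is indeed discrete, so that all the relevant categories genuinely live inside the topological setting of Theorem~\ref{thm:equiv_of_cats}; once that is in hand everything else is a routine restriction argument.
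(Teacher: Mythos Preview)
The paper states this corollary without proof, treating it as immediate from Corollary~\ref{cor:induced_equivalence_between_top_mon_and_coalgebras} and the observation (in the paragraph ``Finite-dimensional vector spaces'') that finite-dimensional topologically-free $(\mathbb{k},\mathsf{d})$-vector spaces are discrete. Your proposal supplies exactly the details the paper suppresses, and the argument is correct: you check that $Alg_{(\mathbb{k},\mathsf{d})}$ and $Top_{(\mathbb{k},\mathsf{d})}$ preserve finite-dimensionality, that $\ostar_{(\mathbb{k},\mathsf{d})}$ restricts (up to coherent isomorphism) to $\otimes_{\mathbb{k}}$ on the finite-dimensional subcategory, and that the monoidal structure maps $\Phi,\phi,\Psi,\psi$ and the natural isomorphisms $\Lambda,\Gamma$ all restrict, so that the monoidal equivalence of Theorem~\ref{thm:monoidal_equivalence} cuts down to one between $\mathbb{FinDimVect}_{\mathbb{k}}^{\mathbb{op}}$ and $\mathbb{FinDimVect}_{\mathbb{k}}$, whence the result by passing to monoids.

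One small remark: your phrase ``$\mathbb{FinDimVect}_{\mathbb{k}}$ is a symmetric monoidal subcategory of $\mathbb{TopFreeVect}_{(\mathbb{k},\mathsf{d})}$'' is slightly loose, since the ambient tensor is $\ostar_{(\mathbb{k},\mathsf{d})}$ rather than $\otimes_{\mathbb{k}}$; what you actually establish (and what is needed) is that the full subcategory of finite-dimensional objects, with the restricted $\ostar_{(\mathbb{k},\mathsf{d})}$, is monoidally \emph{equivalent} to $(\mathbf{FinDimVect}_{\mathbb{k}},\otimes_{\mathbb{k}},\mathbb{k})$. You already say ``up to coherent isomorphism'', so the content is right --- just be careful with the wording.
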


\section{Relationship with finite duality}\label{subsub:relation_with_finite_duality}

Over a field, there is a standard and well-known notion of duality between algebras and coalgebras, known as the finite duality~\cite{Abe,Dasca} and we have the intention to understand the relations if any, between the equivalence of categories from Corollary~\ref{cor:induced_equivalence_between_top_mon_and_coalgebras} and this finite duality. 



Let $(-)^*\colon \mathbf{Mod}^{\mathsf{op}}_{\ring{R}}\to\mathbf{Mod}_{\ring{R}}$ be the usual algebraic dual functor. Then, $\mathbb{D}_{*}:=((-)^*,\Theta,\theta)$\index{D@$\mathbb{D}_{*}$} is a lax symmetric monoidal functor from $\mathbb{Mod}^{\mathbb{op}}_{\ring{R}}$ to $\mathbb{Mod}_{\ring{R}}$ (where $\Theta$ is as in Definition~\ref{def:can_inj_of_Mstar_otimes_Nstar_into_M_otimes_N_star}, and $\theta\colon {R}\to {R}^*$\index{$\theta$} is the isomorphism $\theta(1_{\mathsf{R}})=id_{\mathsf{R}}$ (and $\theta^{-1}(\ell)=\ell(1_{\mathsf{R}})$)). 
%
%
When $\mathbb{k}$ is a field, there is  the {\em finite dual functor}\index{Dfin@$D_{fin}$} $D_{fin}\colon \mathbf{Mon}(\mathbb{Vect}_{\mathbb{k}})^{\mathsf{op}}\to {}_{\epsilon}\mathbf{Coalg}_{\mathbb{k}}$ (denoted by $(-)^{0}$ in~\cite{Abe,Dasca}).
 The aforementioned finite duality is the adjunction $D_{fin}^{\mathsf{op}}\dashv \widetilde{\mathbb{D}_{*}}\colon {}_{\epsilon}\mathbf{Coalg}_{\mathbb{k}}\to \mathbf{Mon}(\mathbb{Vect}_{\mathbb{k}})$ (see e.g., \cite[Theorem~1.5.22, p.~44]{Dasca}, where $\widetilde{\mathbb{D}_{*}}$ is denoted by $(-)^*$). 

\subsection{The underlying algebra}\label{sec:the_underlying_algebra}

Let $(\ring{R},\tau)$ be a rigid ring. Let $(M,\sigma),(N,\gamma)$ be topologically-free $(\ring{R},\tau)$-modules. According to Lemma~\ref{lem:continuity_of_point_tensors}, $M\times N\xrightarrow{-\ostar-}(M,\sigma)\ostar_{(\ring{R},\tau)}(N,\gamma)$ is $\ring{R}$-bilinear. Denoting by $\mathbf{TopFreeMod}_{(\ring{R},\tau)}\xrightarrow{\|-\|}\mathbf{Mod}_{\ring{R}}$\index{$\mid\mid\cdot\mid\mid$} the canonical forgetful functor, this means that there is a unique $\ring{R}$-linear map 
$\|(M,\sigma)\|\otimes_{\ring{R}}\|(N,\gamma)\|\xrightarrow{\Xi_{(M,\sigma),(N,\gamma)}}\|(M,\sigma)\ostar_{(\ring{R},\tau)}(N,\gamma)\|$\index{$\Xi_{(M,\sigma),(N,\gamma)}$} such that for each $u\in M$, $v\in N$, $\Xi_{(M,\sigma),(N,\gamma)}(u\otimes v)=u\ostar v$. 

\begin{lemma}\label{lem:underlying_algebra}
$\mathbb{A}:=(\|-\|,(\Xi_{(M,\sigma),(N,\gamma)})_{(M,\sigma),(N,\gamma)},id_{R})$\index{A@$\mathbb{A}$} is a lax symmetric monoidal functor from $\mathbb{TopFreeMod}_{(\ring{R},\tau)}$ to $\mathbb{Mod}_{\ring{R}}$.
\end{lemma}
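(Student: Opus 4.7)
The plan is to verify the three ingredients required of a lax symmetric monoidal functor: that $\Xi$ is well-defined and natural, and that its components satisfy the associator, unitor, and symmetry coherence diagrams (together with the condition on the unit map $id_R$).

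First, since by Lemma~\ref{lem:continuity_of_point_tensors} the map $M \times N \xrightarrow{-\ostar-} \|(M,\sigma) \ostar_{(\ring{R},\tau)} (N,\gamma)\|$ is $\ring{R}$-bilinear, the universal property of $\otimes_{\ring{R}}$ produces the unique $\ring{R}$-linear $\Xi_{(M,\sigma),(N,\gamma)}$ with $\Xi_{(M,\sigma),(N,\gamma)}(u \otimes v) = u \ostar v$. Naturality in both variables is immediate from Eq.~(\ref{eq:otimesRtau_tensor}): given continuous linear maps $f_i$, both squares' diagonals send $u \otimes v$ to $f_1(u) \ostar f_2(v)$, and bilinearity forces equality.

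For the coherence axioms the key observation is that the associator $\alpha^{\ostar}$, the unitors $\lambda^{\ostar}, \rho^{\ostar}$, and the symmetry $\sigma^{\ostar}$ of $\mathbb{TopFreeMod}_{(\ring{R},\tau)}$ all act on pure tensors exactly as their counterparts in $\mathbb{Mod}_{\ring{R}}$ act on simple tensors. Explicitly, I would verify the identities
\begin{equation*}
\alpha^{\ostar}((u_1 \ostar u_2) \ostar u_3) = u_1 \ostar (u_2 \ostar u_3), \quad \lambda^{\ostar}(1_{\ring{R}} \ostar u) = u, \quad \rho^{\ostar}(u \ostar 1_{\ring{R}}) = u, \quad \sigma^{\ostar}(u \ostar v) = v \ostar u,
\end{equation*}
by unfolding the definition $u \ostar v := \Theta(\Gamma(u) \otimes \Gamma(v))$ and the construction of the $\ostar$-coherence maps through the functor $\mathbb{Alg}_{(\ring{R},\tau)}$. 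Concretely, pairing both sides with $\ell_1 \otimes \ell_2 \otimes \ell_3$ (resp.\ $\ell$, resp.\ $\ell_1 \otimes \ell_2$) reduces each identity to the corresponding equality for $\otimes_{\ring{R}}$-associator, unitors, and symmetry in $\mathbb{Mod}_{\ring{R}}$, applied to linear forms $\ell_i$. Alternatively, one may choose topological bases of the modules involved and invoke the uniqueness statement in Lemma~\ref{lem:top_iso_from_top_basis}: the coherence maps of $\ostar_{(\ring{R},\tau)}$ permute the basis elements $b \ostar d$ (and $(b \ostar d) \ostar e$) exactly as their $\otimes_{\ring{R}}$-counterparts permute $b \otimes d$, by Proposition~\ref{prop:top_basis_of_a_tensor_prod}.

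Once these element-level identities are in hand, each of the four coherence diagrams for $\mathbb{A}$ (the associativity pentagon-hexagon, the two unit squares, and the symmetry square) commutes after composition with $\otimes_{\ring{R}}$-generators: every path in these diagrams, chased on a simple tensor $u_1 \otimes u_2 \otimes u_3$ (resp.\ $1_{\ring{R}} \otimes u$, $u \otimes 1_{\ring{R}}$, $u \otimes v$), produces the same iterated-$\ostar$ expression. Since simple tensors generate the respective algebraic tensor products, the diagrams commute on all elements. The unit condition concerns $id_R\colon R \to \|(R,\tau)\|$, which is the identity map of the underlying module, so its compatibility with $\lambda^{\ostar}, \rho^{\ostar}$ is absorbed into the unitor identities above. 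The main obstacle is the verification of the four identities in the displayed equation; I expect it to be routine but notationally cumbersome, and most cleanly handled by the topological-basis route since $\alpha^{\ostar}, \lambda^{\ostar}, \rho^{\ostar}, \sigma^{\ostar}$ are, by construction via transport along the dual equivalence, characterized by their action on the basis elements $b \ostar d$ produced in Proposition~\ref{prop:top_basis_of_a_tensor_prod}.
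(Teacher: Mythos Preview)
The paper does not include a proof of this lemma; it is stated and immediately used. The surrounding Section~\ref{sec:monoidality_of_ostar} explicitly says that proofs of this type ``mainly consist in rather tedious, but simple, inspections of commutativity of some diagrams, essentially by working with given topological or linear bases'' and are therefore omitted, and the footnote there records that the $\ostar$-associator is the map $(b\ostar d)\ostar e\mapsto b\ostar(d\ostar e)$ on basis elements. Your proposal is correct and follows exactly this indicated route: you establish naturality of $\Xi$ from Eq.~(\ref{eq:otimesRtau_tensor}), reduce each coherence diagram to a check on simple tensors $u\otimes v$ (or basis tensors $b\otimes d$), and use that the $\ostar$-coherence constraints act on these just as the $\otimes_{\ring{R}}$-constraints do. There is nothing to add; the basis route via Proposition~\ref{prop:top_basis_of_a_tensor_prod} and Lemma~\ref{lem:top_iso_from_top_basis} is precisely what the paper has in mind.
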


Let $\tilde{\mathbb{A}}\colon\mathbf{Mon}(\mathbb{TopFreeMod}_{(\ring{R},\tau)})\to \mathbf{Mon}(\mathbb{Mod}_{\ring{R}})$\index{A2@$\tilde{\mathbb{A}}$} be the functor induced by $\mathbb{A}$ as introduced in Appendix~\ref{appendix:moncat:mon_fun_and_ind_fun}. Using the functorial isomorphism $O\colon \mathbf{Mon}(\mathbb{Mod}_{\ring{R}})\simeq {}_1\mathbf{Alg}_{\ring{R}}$ (Example~\ref{ex:cats_of_mon}, Appendix~\ref{appendix:mon_and_comon}), to any monoid in $\mathbb{TopFreeMod}_{(\ring{R},\tau)}$ is associated an ordinary algebra. 
\begin{definition}\label{def:underlying_algebra}
Let us define $UA:=\mathbf{Mon}(\mathbb{TopFreeMod}_{(\ring{R},\tau)})\xrightarrow{O\circ \tilde{\mathbb{A}}}{}_{1}\mathbf{Alg}_{\ring{R}}$\index{UA@$UA$}. 
Given a monoid $((M,\sigma),\mu,\eta)$ in $\mathbb{TopFreeMod}_{(\ring{R},\tau)}$, 
then $UA((M,\sigma),\mu,\eta)=O(\tilde{\mathbb{A}}((M,\sigma),\mu,\eta))$ is referred to as the {\em underlying (ordinary) algebra}\index{Underlying algebra} of the monoid $((M,\sigma),\mu,\eta)$. In details, $UA((M,\sigma),\mu,\eta)=(M,\mu_{bil},\eta(1_{\ring{R}}))$ with $\mu_{bil}\colon M\times M\to M$\index{$\mu_{bil}$}  given by $\mu_{bil}(u,v):=\mu(u\ostar v)$.
\end{definition}


\begin{remark}
Since by Lemma~\ref{lem:underlying_algebra}, $\mathbb{A}$ is symmetric, it induces a functor (see Remark~\ref{rem:induced_functor_symmetric_case},  Appendix~\ref{appendix:moncat:mon_fun_and_ind_fun}) ${}_{c}\mathbf{Mon}(\mathbb{TopFreeMod}_{(\ring{R},\tau)})\xrightarrow{\tilde{\mathbb{A}}}{}_{c}\mathbf{Mon}(\mathbb{Mod}_{\ring{R}})$. Because one has the co-restriction ${}_{c}\mathbf{Mon}(\mathbb{Mod}_{\ring{R}})\xrightarrow{O}{}_{1,c}\mathbf{Alg}_{\ring{R}}$, one may consider the {\em underlying algebra} functor $UA={}_c\mathbf{Mon}(\mathbb{TopFreeMod}_{(\ring{R},\tau)})\xrightarrow{O\circ \tilde{\mathbb{A}}}{}_{1,c}\mathbf{Alg}_{\ring{R}}$.
\end{remark}

\begin{example}\label{ex:hadamard_monoid_2} (Continuation of Example~\ref{ex:hadamard_monoid})
$UA(M_{(\ring{R},\tau)}(X))=A_{\ring{R}}X$. 
\end{example}

\subsection{Relations with $\widetilde{\mathbb{D}_{*}}$}\label{paragraph:relation_with_algebraic_dual}

Let $(\ring{R},\tau)$ be a rigid ring. 
Let $\mathbf{FreeMod}_{\ring{R}}\xrightarrow{E}\mathbf{Mod}_{\ring{R}}$ be the canonical embedding functor. Since $\mathbb{FreeMod}_{\ring{R}}$ is a symmetric monoidal subcategory of $\mathbb{Mod}_{\ring{R}}$ it follows that $\mathbb{E}=(E,id,id)$ is a strict monoidal functor from $\mathbb{FreeMod}_{\ring{R}}$ to $\mathbb{Mod}_{\ring{R}}$ (see Appendix~\ref{appendix:moncat:mon_fun_and_ind_fun}). 

One claims that $\mathbb{D}_{*}\circ \mathbb{E}^{\mathbb{op}}=\mathbb{A}\circ \mathbb{Alg}_{(\ring{R},\tau)}$. In particular, if $\mathbb{k}$ is a field (and $\tau$ is a ring topology on $\mathbb{k}$), then this reduces to $\mathbb{D}_{*}=\mathbb{A}\circ\mathbb{Alg}_{(\mathbb{k},\tau)}$. 

That $\|-\|\circ Alg_{(\ring{R},\tau)}=(-)^*\circ E^{\mathsf{op}}$ is due to the very definition of $Alg_{(\ring{R},\tau)}$.
Of course, $\|\phi\|=\theta$. 
That for each free modules $M,N$, $\|(\Lambda_M\otimes\Lambda_N)^*\|\circ\Xi_{M^*,N^*}=\Theta_{M,N}$ is easy to check. 
So $((-)^*\circ E^{\mathsf{op}},\Theta,\theta)=(\|-\|,\Xi,id_R)\circ (Alg_{(\ring{R},\tau)},\Phi,\phi)
=(\|-\|\circ Alg_{(\ring{R},\tau)},(\|\Phi_{M,N}\|\circ\Xi_{M^*,N^*})_{M,N},\|\phi\|)$.

It follows that the finite dual monoid\footnote{In $\mathbb{Vect}_{\mathbb{k}}$.} $\widetilde{\mathbb{D}_{*}}(\ring{C})$ of a $\mathbb{k}$-coalgebra $\ring{C}$, is equal to $\widetilde{\mathbb{A}}(\widetilde{\mathbb{Alg}_{(\mathbb{k},\tau)}}(\ring{C}))$ whatever the ring topology $\tau$ on the field $\mathbb{k}$, and thus as ordinary algebras, $O(\widetilde{\mathbb{D}_*}(\ring{C}))=UA(\widetilde{\mathbb{Alg}_{(\mathbb{k},\tau)}}(\ring{C}))$. 

\begin{proposition}\label{prop:relation_with_D_star}
Let $(\mathbb{k},\tau)$ be a field with a ring topology. 
The functor $\mathbf{Mon}(\mathbb{FreeTopVect}_{(\mathbb{k},\tau)})\xrightarrow{O\circ \tilde{\mathbb{A}}} {}_1\mathbf{Alg}_{\mathbb{k}}$ has a left adjoint, namely $\widetilde{\mathbb{Alg}_{(\mathbb{k},\tau)}}\circ D_{fin}^{\mathsf{op}}\circ O^{-1}$.
\end{proposition}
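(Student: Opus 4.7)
The strategy is to exhibit the proposed left adjoint as a composition of three known adjunctions and then use the identification of Section~\ref{paragraph:relation_with_algebraic_dual} to recognize the resulting right adjoint as $UA$. First I would observe that $O\colon \mathbf{Mon}(\mathbb{Vect}_{\mathbb{k}})\simeq {}_1\mathbf{Alg}_{\mathbb{k}}$ is an isomorphism of categories, hence in particular both $O^{-1}\dashv O$ and $O\dashv O^{-1}$. Next, the finite duality recalled at the beginning of this section supplies the adjunction $D_{fin}^{\mathsf{op}}\dashv \widetilde{\mathbb{D}_{*}}$ between ${}_{\epsilon}\mathbf{Coalg}_{\mathbb{k}}$ (viewed with the appropriate opposite convention) and $\mathbf{Mon}(\mathbb{Vect}_{\mathbb{k}})$. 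Finally, Corollary~\ref{cor:induced_equivalence_between_top_mon_and_coalgebras} provides the equivalence $\widetilde{\mathbb{Alg}_{(\mathbb{k},\tau)}}\colon {}_{\epsilon}\mathbf{Coalg}_{\mathbb{k}}^{\mathsf{op}}\to \mathbf{Mon}(\mathbb{TopFreeVect}_{(\mathbb{k},\tau)})$ with quasi-inverse $\widetilde{(\mathbb{Top}^{\mathbb{d}}_{(\mathbb{k},\tau)})}$, so in particular $\widetilde{\mathbb{Alg}_{(\mathbb{k},\tau)}}\dashv \widetilde{(\mathbb{Top}^{\mathbb{d}}_{(\mathbb{k},\tau)})}$.

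Composing these three adjunctions (left adjoints compose, as do right adjoints) yields
\begin{equation*}
\widetilde{\mathbb{Alg}_{(\mathbb{k},\tau)}}\circ D_{fin}^{\mathsf{op}}\circ O^{-1}\ \dashv\ O\circ \widetilde{\mathbb{D}_{*}}\circ \widetilde{(\mathbb{Top}^{\mathbb{d}}_{(\mathbb{k},\tau)})}.
\end{equation*}
It remains only to identify the right adjoint with $UA = O\circ \tilde{\mathbb{A}}$. For this I would invoke the core identity of Section~\ref{paragraph:relation_with_algebraic_dual}, namely $O\circ \widetilde{\mathbb{D}_{*}} = UA\circ \widetilde{\mathbb{Alg}_{(\mathbb{k},\tau)}}$ (which, over a field, follows from the equality $\mathbb{D}_{*}=\mathbb{A}\circ \mathbb{Alg}_{(\mathbb{k},\tau)}$ of lax symmetric monoidal functors and the functoriality of the $(-)\widetilde{\ }$ construction). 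Substituting and applying the natural isomorphism $\widetilde{\mathbb{Alg}_{(\mathbb{k},\tau)}}\circ \widetilde{(\mathbb{Top}^{\mathbb{d}}_{(\mathbb{k},\tau)})}\simeq \mathrm{id}$ coming from the equivalence, one obtains
\begin{equation*}
O\circ \widetilde{\mathbb{D}_{*}}\circ \widetilde{(\mathbb{Top}^{\mathbb{d}}_{(\mathbb{k},\tau)})}\ =\ UA\circ \widetilde{\mathbb{Alg}_{(\mathbb{k},\tau)}}\circ \widetilde{(\mathbb{Top}^{\mathbb{d}}_{(\mathbb{k},\tau)})}\ \simeq\ UA.
\end{equation*}
Since adjointness is preserved under natural isomorphism of either adjoint, the claim follows.

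The main obstacle I anticipate is not conceptual but purely notational: one has to be careful about the various \textsf{op}'s attached to domain/codomain of $D_{fin}$, $\widetilde{\mathbb{D}_{*}}$, and $\widetilde{\mathbb{Alg}_{(\mathbb{k},\tau)}}$, and verify that the compositions really are well-defined in the direction needed. All substantive content has already been done: the monoidal equivalence in Theorem~\ref{thm:monoidal_equivalence}/Corollary~\ref{cor:induced_equivalence_between_top_mon_and_coalgebras} and the functor equality $\mathbb{D}_{*}\circ \mathbb{E}^{\mathbb{op}}=\mathbb{A}\circ \mathbb{Alg}_{(\mathbb{k},\tau)}$ of Section~\ref{paragraph:relation_with_algebraic_dual}, together with classical finite duality, combine formally to give the statement.
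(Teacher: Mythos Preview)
Your proposal is correct and follows essentially the same route as the paper: use the identity $\widetilde{\mathbb{D}_{*}}=\tilde{\mathbb{A}}\circ\widetilde{\mathbb{Alg}_{(\mathbb{k},\tau)}}$ from Section~\ref{paragraph:relation_with_algebraic_dual}, compose the finite duality adjunction with the equivalence of Corollary~\ref{cor:induced_equivalence_between_top_mon_and_coalgebras}, and transfer along the natural isomorphism $\widetilde{\mathbb{D}_{*}}\circ\widetilde{(\mathbb{Top}^{\mathbb{d}}_{(\mathbb{k},\tau)})}\simeq\tilde{\mathbb{A}}$. The only cosmetic difference is that you make the role of the isomorphism $O$ explicit, whereas the paper absorbs it silently.
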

\begin{proof}
One has\index{D2@$\widetilde{\mathbb{D}_{*}}$} $\widetilde{\mathbb{D}_{*}}=\tilde{\mathbb{A}}\circ\widetilde{\mathbb{Alg}_{(\mathbb{k},\tau)}}$, whence $\widetilde{\mathbb{D}_{*}}\circ \widetilde{(\mathbb{Top}_{(\mathbb{k},\tau)}^{\mathbb{d}})}=\tilde{\mathbb{A}}\circ\widetilde{\mathbb{Alg}_{(\mathbb{k},\tau)}}\circ \widetilde{(\mathbb{Top}_{(\mathbb{k},\tau)}^{\mathbb{d}})^{\mathbb{op}}}\simeq \tilde{\mathbb{A}}$ (natural isomorphism) by Theorem~\ref{thm:monoidal_equivalence}. Since $\widetilde{\mathbb{Alg}_{(\mathbb{k},\tau)}}\circ D_{fin}^{\mathsf{op}}$ is a left adjoint of $\widetilde{\mathbb{D}_{*}}\circ \widetilde{(\mathbb{Top}_{(\mathbb{k},\tau)}^{\mathbb{d}})}$, it follows that\footnote{Because if $F,G$ are two naturally isomorphic functors and  $L$ is a left adjoint of $F$, then it is also a left adjoint of $G$.} it is also the left adjoint of $\tilde{\mathbb{A}}$.  
\end{proof}

\subsection{The underlying topological algebra}\label{paragraph:def_of_TA}

Let $\ring{R}$ be a ring. 
Let $\ring{A}=((A,\sigma),\mu,\eta)$ be an object of $\mathbf{Mon}(\mathbb{TopFreeMod}_{(\ring{R},\mathsf{d})})$. One knows that $(A,\sigma)$ is an object of $\mathbf{TopMod}_{(\ring{R},\mathsf{d})}$ and $UA(\ring{A})$ is an object of ${}_{1}\mathbf{Alg}_{\ring{R}}$. Moreover, 
$(A,\sigma)\times(A,\sigma)\xrightarrow{\mu_{bil}} (A,\sigma)$ is continuous,  since it is equal to the composition $(A,\sigma)\times(A,\sigma)\xrightarrow{-\ostar-}(A,\sigma)\ostar_{(\ring{R},\mathsf{d})}(A,\sigma)\xrightarrow{\mu}(A,\sigma)$ of continuous maps (see Lemma~\ref{lem:continuity_of_point_tensors}). Now, let $((A,\sigma),\mu,\eta)\xrightarrow{f}((B,\gamma),\nu,\zeta)$ be a morphism in $\mathbf{Mon}(\mathbb{TopFreeMod}_{(\ring{R},\mathsf{d})})$. In particular, $(A,\sigma)\xrightarrow{f}(B,\gamma)$ is linear and continuous, and the following diagram commutes. 
\begin{equation}
\xymatrix@C=0.8pc@R=0.7pc{
\ar[ddd]_{f\times f}(A,\sigma)\times (A,\sigma) \ar[rd]_{-\ostar-}\ar@/^1pc/[rrd]^{\mu_{bil}}& &\\
&(A,\sigma)\ostar_{(\ring{R},\mathsf{d})}(A,\sigma)\ar[d]_{f\ostar_{(\ring{R},\mathsf{d})}f} \ar[r]^-{\mu}& (A,\sigma)\ar[d]^{f}\\
&(B,\gamma)\ostar_{(\ring{R},\mathsf{d})}(B,\gamma)\ar[r]_-{\nu} & (B,\gamma)\\
(B,\gamma)\times (B,\gamma)\ar[ru]_{-\ostar -}\ar@/_1pc/[rru]_{\nu_{bil}}&&
}
\end{equation}
Since by assumption, one also has $f\circ\eta=\zeta$, it follows that $f(\eta(1_{\ring{R}}))=\zeta(1_{\ring{R}})$, and thus $f$ is a continuous algebra map from $((A,\sigma),\mu_{bil},\eta(1_{\ring{R}}))$ to $((B,\nu_{bil},\zeta(1_{\ring{R}}))$.) 

This provides a {\em topological algebra} functor\index{Topological algebra functor} $\mathbf{Mon}(\mathbb{TopFreeMod}_{(R,\mathsf{d})})\xrightarrow{TA}{}_{1}\mathbf{TopAlg}_{(\ring{R},\mathsf{d})}$\index{TA@$TA$} and the following diagram commutes (the unnamed arrows are either the obvious forgetful functors or the evident embedding functor), so that $TA$ is  concrete\footnote{A {\em concrete category}\index{Concrete category (or functor)} $\mathbf{C}$ over $\mathbf{D}$ is a pair $(\mathbf{C},\mathbf{C}\xrightarrow{U}\mathbf{D})$ with $U$ a faithful functor. Given concrete categories $(\mathbf{C}_i,U_i)$, $i=1,2$, over $\mathbf{D}$, by a {\em concrete functor} $(\mathbf{C}_1,U_1)\xrightarrow{F}(\mathbf{C}_2,U_2)$ is meant an ordinary functor $\mathbf{C}_1\xrightarrow{F}\mathbf{C}_2$ such that the following diagram commutes. Such a functor is necessarily faithful. 
\begin{equation}
\xymatrix@R=0.3pc{
\mathbf{C}_1 \ar[rr]^{F}\ar[rd]_{U_1}&&\mathbf{C}_2\ar[ld]^{U_2}\\
&\mathbf{D} &
}
\end{equation}} over $\mathbf{TopMod}_{(\ring{R},\mathsf{d})}$, whence faithful.
\begin{equation}
\xymatrix@R=1pc{
&\ar[d]\ar@/_1pc/[ldd]_{\tilde{\mathbb{A}}}\mathbf{Mon}(\mathbb{TopFreeMod}_{(\ring{R},\mathsf{d})}) \ar[r]^-{TA}&{}_{1}\mathbf{TopAlg}_{(\ring{R},\mathsf{d})}\ar[d]\ar@/^3pc/[dd]\\
&\ar[d]^{\|\cdot\|}\mathbf{TopFreeMod}_{(\ring{R},\mathsf{d})} \ar@{^{(}->}[r]&\ar[ld]\mathbf{TopMod}_{(\ring{R},\mathsf{d})}\\
\mathbf{Mon}(\mathbb{Mod}_{\ring{R}})\ar[r]\ar@/_1pc/[rr]_{O}&\mathbf{Mod}_{\ring{R}}&\ar[l]{}_{1}\mathbf{Alg}_{\ring{R}} &
}
\end{equation}

\begin{remark}\label{rem:TA_commutative_case}
When $\mathsf{A}$ is a commutative monoid in $\mathbb{TopFreeMod}_{(\ring{R},\mathsf{d})}$, then $TA(\mathsf{A})$ is a commutative topological algebra.
\end{remark}

\begin{example}\label{ex:hadamard_monoid_2bis} 
For each set $X$, $TA(M_{(\ring{R},\mathsf{d})}(X))=A_{(\ring{R},\mathsf{d})}(X)$.
\end{example}

\begin{proposition}\label{prop:TA_is_full}
$TA$ is a full embedding (injective on objects and faithful).
\end{proposition}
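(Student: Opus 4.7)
The plan is to verify the three properties of a full embedding separately: faithfulness, injectivity on objects, and fullness. Faithfulness is essentially free: the commutative diagram just before the statement shows that $TA$ is concrete over $\mathbf{TopMod}_{(\ring{R},\mathsf{d})}$ (because at the level of underlying continuous $\ring{R}$-linear maps, a morphism of monoids and its image under $TA$ are literally the same map), and any concrete functor is faithful.

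For injectivity on objects I would recover the monoidal data $(\mu,\eta)$ of a monoid $((A,\sigma),\mu,\eta)$ in $\mathbb{TopFreeMod}_{(\ring{R},\mathsf{d})}$ from $TA((A,\sigma),\mu,\eta)=((A,\sigma),\mu_{bil},\eta(1_{\ring{R}}))$. The unit $\eta\colon (R,\mathsf{d})\to(A,\sigma)$ is $\ring{R}$-linear, hence entirely determined by $\eta(1_{\ring{R}})$. For the multiplication, by Definition~\ref{def:underlying_algebra} one has $\mu(u\ostar v)=\mu_{bil}(u,v)$ for all $u,v\in A$; since by Corollary~\ref{cor:top_basis_of_a_tensor_prod} the $\ring{R}$-linear span of the elements of the form $u\ostar v$ is dense in $(A,\sigma)\ostar_{(\ring{R},\mathsf{d})}(A,\sigma)$, and since $\mu$ is continuous with codomain the Hausdorff space $(A,\sigma)$, $\mu$ is uniquely determined by its values on the $u\ostar v$'s (two continuous maps that agree on a dense set agree everywhere in a Hausdorff target).

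The main (but still essentially routine) step is fullness. Given two monoids $\mathsf{A}_i=((A_i,\sigma_i),\mu_i,\eta_i)$ ($i=1,2$) in $\mathbb{TopFreeMod}_{(\ring{R},\mathsf{d})}$ and a continuous unital $\ring{R}$-algebra map $f\colon TA(\mathsf{A}_1)\to TA(\mathsf{A}_2)$, I must lift $f$ to a monoid morphism $\mathsf{A}_1\to\mathsf{A}_2$, i.e.\ check that $f\circ\eta_1=\eta_2$ and $f\circ\mu_1=\mu_2\circ(f\ostar_{(\ring{R},\mathsf{d})}f)$. The unit equation follows from $\ring{R}$-linearity of $f$ together with $f(\eta_1(1_{\ring{R}}))=\eta_2(1_{\ring{R}})$ (the latter being built into $f$ being a unital algebra map). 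For the multiplicative equation, on every element of the form $u\ostar v$ (with $u,v\in A_1$) one computes, by Eq.~(\ref{eq:otimesRtau_tensor}) and Definition~\ref{def:underlying_algebra},
\[
(\mu_2\circ(f\ostar_{(\ring{R},\mathsf{d})}f))(u\ostar v)=\mu_2(f(u)\ostar f(v))=\mu_{2,bil}(f(u),f(v))=f(\mu_{1,bil}(u,v))=(f\circ\mu_1)(u\ostar v),
\]
using exactly that $f$ is an algebra map between $TA(\mathsf{A}_1)$ and $TA(\mathsf{A}_2)$. Both $f\circ\mu_1$ and $\mu_2\circ(f\ostar_{(\ring{R},\mathsf{d})}f)$ are continuous $\ring{R}$-linear maps from $(A_1,\sigma_1)\ostar_{(\ring{R},\mathsf{d})}(A_1,\sigma_1)$ to the Hausdorff space $(A_2,\sigma_2)$, and they agree on the set $\{u\ostar v\colon u,v\in A_1\}$; by Corollary~\ref{cor:top_basis_of_a_tensor_prod} that set's $\ring{R}$-linear span is dense, hence the two maps coincide globally, which concludes the proof. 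The only genuine subtlety in the whole argument is the systematic use of the density statement Corollary~\ref{cor:top_basis_of_a_tensor_prod} to extend pointwise identities on simple tensors to full equalities — everything else is formal.
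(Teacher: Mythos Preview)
Your proof is correct and follows essentially the same approach as the paper's own argument: both use the density of simple tensors (Corollary~\ref{cor:top_basis_of_a_tensor_prod}) together with Eq.~(\ref{eq:otimesRtau_tensor}) to extend the multiplicative identity from elements $u\ostar v$ to the whole tensor product, and both recover the unit from linearity. The only cosmetic difference is the order of presentation (the paper treats fullness before injectivity on objects).
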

\begin{proof}
Let $\mathsf{A}=((A,\sigma),\mu,\eta)$ and $\mathsf{B}=((B,\gamma),\nu,\zeta)$ be two monoids in $\mathbb{TopFreeMod}_{(\ring{R},\mathsf{d})}$. 
Let $TA(\mathsf{A})\xrightarrow{g}TA(\mathsf{B})$ be a morphism in ${}_1\mathbf{TopAlg}_{(\ring{R},\mathsf{d})}$. In particular, $g\in {}_1\mathbf{Alg}_{\ring{R}}(UA(\mathsf{A}),UA(\mathsf{B}))\cap \mathbf{Top}((|A|,\sigma),(|B|,\gamma))$. (Recall from Remark~\ref{rem:categorytheoretic_recasting_of_free_modules}  that $\mathbf{Mod}_{\ring{R}}\xrightarrow{|\cdot|}\mathbf{Set}$ is the usual forgetful functor, and $\mathbf{Top}$ is the category of Hausdorff topological spaces.)

By assumption, for each $u,v\in A$, one has $g(\mu(u\ostar v))=g(\mu_{bil}(u,v))=\nu_{bil}(g(u),g(v))=\nu(g(u)\ostar g(v))$. Thus, $g\circ\mu=\nu\circ(g\ostar_{(\ring{R},\mathsf{d})}g)$ on $\{\, u\ostar v\colon u,v\in A\,\}$. Since this set spans a dense subset of $(A,\sigma)\otimes_{(R,\tau)}(A,\sigma)$ (according to Corollary~\ref{cor:top_basis_of_a_tensor_prod}), by linearity and continuity, $g\circ\mu=\nu\circ(g\ostar_{(\ring{R},\mathsf{d})}g)$ on the whole of $(A,\sigma)\ostar_{(\ring{R},\mathsf{d})}(A,\sigma)$. 

Moreover, $g(\eta(1_{\ring{R}}))=\zeta(1_{\ring{R}})$, then $g\circ \eta=\zeta$. Therefore, $g$ may be seen as a morphism $\mathsf{A}\xrightarrow{f}\mathsf{B}$ in $\mathbf{Mon}(\mathbb{TopFreeMod}_{(\ring{R},\mathsf{d})})$ with $TA(f)=g$, i.e., $TA$ is full.

Let $\mathsf{A}=((A,\sigma),\mu,\eta),\mathsf{B}=((B,\gamma),\nu,\zeta)$ be monoids in $\mathbb{TopFreeMod}_{(\ring{R},\mathsf{d})}$ such that $TA(\mathsf{A})=TA(\mathsf{B})$. In particular, $(A,\sigma)=(B,\gamma)$, and $\eta=\zeta$. By assumption $\mu_{bil}=\nu_{bil}$. Whence $\mu=\nu$ on $\{\, u\ostar v\colon u\in A,\ v\in B\,\}$, and by continuity they are equal on $(A,\sigma)\ostar_{(\ring{R},\mathsf{d})}(B,\gamma)$. So $\mathsf{A}=\mathsf{B}$, i.e., $TA$ is injective on objects. 
\end{proof}


As a consequence of Proposition~\ref{prop:TA_is_full},  $\mathbf{Mon}(\mathbb{TopFreeMod}_{(\ring{R},\mathsf{d})})$ is isomorphic to a full subcategory of ${}_1\mathbf{TopAlg}_{(\ring{R},\mathsf{d})}$ (\cite[Proposition~4.5, p.~49]{Cats}). Accordingly a monoid in $\mathbb{TopFreeMod}_{(\ring{R},\mathsf{d})}$ is essentially a topological algebra. 

It is clear that  ${}_{c}\mathbf{Mon}(\mathbb{TopFreeMod}_{(\ring{R},\mathsf{d})})\xrightarrow{TA}{}_{1,c}\mathbf{TopAlg}_{(\ring{R},\mathsf{d})}$ of $TA$ (see Remark~\ref {rem:TA_commutative_case}) also is a full embedding functor.

\subsection{Relations with $D_{fin}$}\label{paragraph:relation_with_D_fin}

%
Let $V$ be any vector space on a field $\mathbb{k}$. Then, $V^*$ has a somewhat natural topology called the {\em $V$-topology} (\cite{Abe}) or the {\em finite topology} (\cite{Dasca}), with a fundamental system of neighborhoods of zero consisting of spaces \begin{equation}\label{eq:orthogonal_space_for_Cstar} W^{\dagger}:=\{\, \ell\in V^*\colon \forall w\in W,\ \ell(w)=0\,\}\index{$W^{\dagger}$}\end{equation} where $W$ runs over the finite-dimensional subspaces of $V$. This is manifestely the same topology as our $w^{*}_{(\mathbb{k},\mathsf{d})}$ (see Section~\ref{subsec:algebraic_dual_functor}). Accordingly  this turns $V^*$ into a linearly compact $\mathbb{k}$-vector space (p.~\pageref{subsubsec:linearly_compact_tvs}). The closed subspace of $(V^*,w^{*}_{(\mathbb{k},\mathsf{d})})$ are exactly the subspaces of the form $W^{\dagger}$, where $W$ is any subspace of $V$ (\cite[Proposition~24.4, p.~105]{Bergman}). 
\begin{lemma}\label{lem:codim_dim_1}
Let $W$ be a subspace of $V$. $\mathrm{codim}(W^{\dagger})$ is finite if, and only if, $\dim(W)$ is finite. In this case, $\mathrm{codim}(W^{\dagger})=\dim(W)$.
\end{lemma}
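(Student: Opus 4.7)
The plan is to reduce the statement to the classical fact about the relationship between a subspace and its annihilator via the restriction map on duals. Specifically, I would introduce the $\mathbb{k}$-linear map
\begin{equation}
r\colon V^*\to W^*,\qquad r(\ell)=\ell|_W,
\end{equation}
and observe immediately that its kernel is, by the very definition of $W^{\dagger}$ given in~(\ref{eq:orthogonal_space_for_Cstar}), precisely $W^{\dagger}$.

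Next, I would argue that $r$ is surjective: pick any linear complement $W'$ of $W$ in $V$ (which exists by the usual basis-extension argument, since we are over a field), and for any $\varphi\in W^*$ define $\tilde{\varphi}\in V^*$ by $\tilde{\varphi}(w+w')=\varphi(w)$; this is a linear functional on $V$ with $r(\tilde{\varphi})=\varphi$. The first isomorphism theorem then yields $V^*/W^{\dagger}\cong W^*$, and hence
\begin{equation}
\mathrm{codim}(W^{\dagger})=\dim_{\mathbb{k}}(V^*/W^{\dagger})=\dim_{\mathbb{k}}(W^*).
\end{equation}

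It therefore remains to compare $\dim(W^*)$ with $\dim(W)$. If $\dim(W)$ is finite, then it is a standard fact (valid over any field) that $\dim(W^*)=\dim(W)$; this gives both the finiteness conclusion and the claimed equality $\mathrm{codim}(W^{\dagger})=\dim(W)$. Conversely, if $\dim(W)$ is infinite, then $W$ contains an infinite linearly independent family $(w_n)_{n\in\mathbb{N}}$; extending this to a basis $B$ of $W$, the coefficient maps $(w_n^*)_{n\in\mathbb{N}}\subseteq W^*$ (in the sense of Remark~\ref{rem:coefficient_maps}) form an infinite linearly independent family in $W^*$, showing $\dim(W^*)$ is infinite, and so is $\mathrm{codim}(W^{\dagger})$.

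I do not anticipate a genuine obstacle here: the only slightly delicate point is the surjectivity of the restriction map $r$, which relies on the existence of an algebraic complement (hence on the axiom of choice through basis existence, but this is used freely throughout the paper). Everything else is bookkeeping with the first isomorphism theorem and elementary dimension counts.
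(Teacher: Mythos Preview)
Your proof is correct and follows essentially the same route as the paper: both identify the restriction map $V^*\to W^*$ (which the paper writes as $incl_W^*$), note its kernel is $W^{\dagger}$, argue it is onto, and deduce $V^*/W^{\dagger}\simeq W^*$, hence $\mathrm{codim}(W^{\dagger})=\dim W^*$. You supply more detail than the paper does---an explicit complement for surjectivity and the explicit treatment of the finite/infinite dichotomy for $\dim W^*$ versus $\dim W$---but the argument is the same.
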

\begin{proof}
One observes that $V^*/W^{\dagger}\simeq W^*$ because the map $V^*\xrightarrow{incl_W^*}W^*$ is onto, where $W\xrightarrow{incl_W}V$ is the canonical inclusion, and $\ker incl_W^*=W^\dagger$. Since $V^*/W^\dagger\simeq W^*$, it follows that  $\mathrm{codim}(W^{\dagger})=\dim W^*$. 
\end{proof}
%

\begin{theorem}\label{thm:findual}
Let $\mathbb{k}$ be a field. For each monoid $\mathsf{A}$ in $\mathbb{TopFreeVect}_{(\mathbb{k},\mathsf{d})}$, $\widetilde{(\mathbb{Top}_{(\mathbb{k},\mathsf{d})}^{\mathbb{d}})}(\mathsf{A})$ is a subcoalgebra of $D^{\mathsf{op}}_{fin}(\tilde{\mathbb{A}}(\mathsf{A}))$.  
Furthermore,  the assertions below are equivalent.
\begin{enumerate}
\item\label{lem:findual:pt:assertion1} In $TA(\mathsf{A})$ every finite-codimensional ideal is closed. 
\item\label{lem:findual:pt:assertion2bis} $\tilde{\mathbb{A}}(A)$ is reflexive\footnote{A monoid $\ring{A}$ in $\mathbb{Vect}_{\mathbb{k}}$ is {\em reflexive}\index{Reflexive algebra} when $\mathsf{A}\simeq \widetilde{\mathbb{D}_{*}}(D^{\mathsf{op}}_{fin}(\mathsf{A}))$ under the linear map $u\mapsto (\ell\mapsto \ell(u))$, which is the unit of the adjunction $D^{\mathsf{op}}_{fin}\dashv \widetilde{\mathbb{D}_*}$.}.
\item\label{lem:findual:pt:assertion2} The coalgebra $\widetilde{(\mathbb{Top}_{(\mathbb{k},\mathsf{d})}^{\mathbb{d}})}(\mathsf{A})$ is coreflexive\footnote{\label{fn:coreflx}A coalgebra $\ring{C}$ is {\em coreflexive}\index{Coreflexive coalgebra}, when $\ring{C}\simeq D^{\mathsf{op}}_{fin}(\widetilde{\mathbb{D}_{*}}(\mathsf{C}))$ under the natural inclusion $u\mapsto (\ell\mapsto \ell(u))$, which is the counit of $D^{\mathsf{op}}_{fin}\dashv \widetilde{\mathbb{D}_*}$.}.
\item\label{lem:findual:pt:assertion3} $\widetilde{(\mathbb{Top}_{(\mathbb{k},\mathsf{d})}^{\mathbb{d}})}(\mathsf{A})=D^{\mathsf{op}}_{fin}(\tilde{\mathbb{A}}(\mathsf{A}))$.
\end{enumerate}
\end{theorem}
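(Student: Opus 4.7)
The plan is to establish the two claims separately. Write $A:=\tilde{\mathbb{A}}(\mathsf{A})$ for the underlying algebra, $C:=\widetilde{(\mathbb{Top}_{(\mathbb{k},\mathsf{d})}^{\mathbb{d}})}(\mathsf{A})$ for the topological dual coalgebra, and $A^0:=D^{\mathsf{op}}_{fin}(A)$.

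For the subcoalgebra claim, I would rely on the explicit description of $C$ given in Example~\ref{ex:explicit_calculus_of_tilde_Gamma}: for $\ell\in (A,\sigma)'$ the comultiplication $\delta(\ell)=\sum_i\ell_i\otimes r_i$ satisfies $\ell(\mu_{bil}(u,v))=\sum_i\ell_i(u)r_i(v)$. Reading this identity as $\ell\circ m=\Theta_{A,A}(\sum_i\ell_i\otimes r_i)$, where $m$ linearizes $\mu_{bil}$, is exactly the standard characterization of membership in $A^0$; the same formula shows that the inclusion $\iota\colon C\hookrightarrow A^0$ intertwines $\delta$ with the comultiplication that $C$ inherits from $A^0$.

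For the equivalences I plan to pivot on $\iota$ together with the identification $\widetilde{\mathbb{D}_*}(C)=\tilde{\mathbb{A}}(\widetilde{\mathbb{Alg}_{(\mathbb{k},\mathsf{d})}}(C))\simeq A$ coming from Proposition~\ref{prop:relation_with_D_star} and Corollary~\ref{cor:induced_equivalence_between_top_mon_and_coalgebras}, whose concrete realization is $\Gamma_{(A,\sigma)}$. A direct unraveling shows that under this identification the counit $\epsilon_C\colon C\to D^{\mathsf{op}}_{fin}(\widetilde{\mathbb{D}_*}(C))$ of the finite-duality adjunction coincides with $\iota$, so (3) $\Leftrightarrow$ (4) is immediate. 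For (2) $\Leftrightarrow$ (4), the composite of the unit $\eta_A\colon A\to (A^0)^*$ with the restriction $r\colon (A^0)^*\to C^*$ dual to $\iota$ equals $\Gamma_{(A,\sigma)}$, which is an isomorphism by Proposition~\ref{lem:second_nat_iso}. Hence $\eta_A$ is an isomorphism iff $r$ is, and the latter happens iff $\iota$ is an equality, by the elementary fact that the dual of a strict inclusion of vector spaces fails to be injective.

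The core technical step is (1) $\Leftrightarrow$ (4), which I intend to reduce to the following lemma: in a topologically-free $(\mathbb{k},\mathsf{d})$-module, every finite-codimensional closed subspace is open. The lemma follows from the annihilator correspondence between closed subspaces of $(A,\sigma)$ and subspaces of $(A,\sigma)'$ (a special case of linearly-compact duality, \cite[Proposition~24.4, p.~105]{Bergman}): a closed cofinite $V$ corresponds to a finite-dimensional $W\subseteq (A,\sigma)'$, so $V=\bigcap_{i=1}^n\ker\ell_i$ for finitely many continuous functionals, each having open kernel since $\mathbb{k}$ is discrete. With this in hand, (4) $\Rightarrow$ (1) comes by writing any cofinite ideal as a finite intersection of kernels of functionals in $A^0=(A,\sigma)'$; conversely, (1) $\Rightarrow$ (4) because for $\ell\in A^0$ the cofinite ideal $I\subseteq \ker\ell$ is closed by (1), hence open by the lemma, so $A/I$ is discrete and $\bar\ell$---whence $\ell$---is automatically continuous. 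The main obstacle is proving the lemma and carefully verifying that $\epsilon_C$ agrees with $\iota$ under the various identifications of Sections~\ref{sec:recasting} and~\ref{sec:topprodtopfreemodules}.
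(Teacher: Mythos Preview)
Your proposal is correct and follows a route that is close in spirit to the paper's but organized around a different pivot.

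For the subcoalgebra claim, the paper argues topologically: continuity of $\ell\in(A,\sigma)'$ yields a finite-dimensional $V\subseteq C$ with $V^\dagger\subseteq\ker\ell$, then the fundamental theorem of coalgebras produces a finite-dimensional subcoalgebra $D\supseteq V$, and $D^\dagger$ is the required cofinite ideal. Your argument bypasses this entirely by invoking the characterization $\ell\in A^0\Leftrightarrow m^*(\ell)\in\operatorname{im}\Theta_{A,A}$, which is immediate from the formula in Example~\ref{ex:explicit_calculus_of_tilde_Gamma}. This is shorter and avoids the detour through the fundamental theorem; the paper's approach, on the other hand, makes the role of the topology more visible.

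For the equivalences, the paper's cycle is $(2)\Leftrightarrow(3)$ via the reflexive/coreflexive equivalence, $(3)\Leftrightarrow(1)$ by quoting Abe's criterion \cite[Lemma~2.2.15]{Abe}, then $(1)\Rightarrow(4)\Rightarrow(2)$. You instead anchor everything at $(4)$: you identify both $\epsilon_C$ and (the relevant factor of) $\eta_A$ with $\iota$ via $\Gamma_{(A,\sigma)}$, which gives $(3)\Leftrightarrow(4)$ and $(2)\Leftrightarrow(4)$ directly, and you prove $(1)\Leftrightarrow(4)$ by the ``closed cofinite $\Rightarrow$ open'' lemma. Your scheme is more self-contained (no appeal to Abe's coreflexivity criterion) and makes the factorization $r\circ\eta_A=\Gamma_{(A,\sigma)}$ do real work; the paper's route is slightly shorter if one is willing to cite \cite{Abe}. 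The verifications you flag as the ``main obstacle'' (the lemma and the identification $\epsilon_C=\iota$) are indeed the points requiring care, but both go through exactly as you outline.
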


\begin{proof}
%
Let $\mathsf{A}=((A,\sigma),\mu,\eta)$ be a monoid in $\mathbb{TopFreeMod}_{(\mathbb{k},\mathsf{d})}$. Whence its underlying topological vector space is a linearly compact vector space  (p.~\pageref{subsubsec:linearly_compact_tvs}).   Let $\mathsf{C}:=\widetilde{(\mathbb{Top}_{(\mathbb{k},\mathsf{d})}^{\mathbb{d}})}(\mathsf{A})$. Since $\tilde{\mathbb{A}}\circ \widetilde{\mathbb{Alg}_{(\mathbb{k},\mathsf{d})}}=\widetilde{\mathbb{D}_{*}}$ it follows that $\tilde{\mathbb{A}}\simeq \tilde{\mathbb{A}}\circ  \widetilde{\mathbb{Alg}_{(\mathbb{k},\mathsf{d})}}\circ \widetilde{(\mathbb{Top}_{(\mathbb{k},\mathsf{d})}^{\mathbb{d}})}\simeq \widetilde{\mathbb{D}_{*}}\circ\widetilde{(\mathbb{Top}_{(\mathbb{k},\mathsf{d})}^{\mathbb{d}})}$ (naturally  isomorphic). In particular, $\tilde{\mathbb{A}}(\mathsf{A})\simeq  \widetilde{\mathbb{D}_{*}}(\mathsf{C})$. By construction, the underlying topological vector space of $\mathsf{A}$, namely $(A,\sigma)$, is also the underlying topological vector space of $TA(\mathsf{A})$. Also $\mathsf{A},TA(\mathsf{A})$ and $\tilde{\mathbb{A}}(\mathsf{A})$ share the same underlying vector space $A$, which is isomorphic to $C^*$, where $C$ is the underlying vector space of the coalgebra $\mathsf{C}$. Of course, $(A,\sigma)\simeq Alg_{(\mathbb{k},\mathsf{d})}(C)=(C^*,w^*_{(\mathbb{k},\mathsf{d})})$. Therefore, up to such an isomorphism, $(A,\sigma)$ has a fundamental system of neighborhoods of zero consisting of $V^{\dagger}=\{\, \ell\in A\colon \forall v\in V,\ \ell(v)=0\,\}$ where $V$ is a finite-dimensional subspace of $C$ (see Eq.~(\ref{eq:orthogonal_space_for_Cstar})). 

Let $\ell\in (A,\sigma)'$. By continuity of $\ell$, there exists a finite-dimensional subspace $V$ of $C$ such that $V^{\dagger}\subseteq \ker \ell$. Let $B$ be a (finite) basis of $V$, and let $D$ be the (necessarily finite-dimensional, by~\cite[Thm~1.3.2, p.~21]{Lambe}) subcoalgebra of $\mathsf{C}$ it generates. Then, $V\subseteq {D}$, which implies that $D^{\dagger}\subseteq V^{\dagger}\subseteq \ker \ell$. But $D^{\dagger}$ is a finite-codimensional ideal of $\tilde{\mathbb{A}}(\mathsf{A})$ (by Lemma~\ref{lem:codim_dim_1} and~\cite[Theorem~2.3.1, p.~78]{Abe}), whence $\ell\in A^0$.\footnote{$A^0:=\{\, \ell\in A^*\colon \ker\ell\ \mbox{contains a finite-codimensional (two-sided) ideal of $UA(\mathsf{A})$}\,\}\index{A0@$A^0$}$ is the underlying vector space of the finite dual coalgebra $D_{fin}(\tilde{\mathbb{A}}(\mathsf{A}))$.} 

It remains to check that the above inclusion $incl_{(A,\sigma)'}$ is a  coalgebra map from  $\widetilde{(\mathbb{Top}_{(\mathbb{k},\mathsf{d})}^{\mathbb{d}})}(\mathsf{A})$ to $D^{\mathsf{op}}_{fin}(\tilde{\mathbb{A}}(\mathsf{A}))$, which would equivalently mean that  $(A,\sigma)'$ is a subcoalgebra of $D^{\mathsf{op}}_{fin}(\tilde{\mathbb{A}}(\mathsf{A}))$. One thus needs to make explicit the two coalgebra structures so as to make possible a comparison. By construction the comultiplication of $\widetilde{(\mathbb{Top}_{(\mathbb{k},\mathsf{d})}^{\mathbb{d}})}(\mathsf{A})$ is given by the composition $\Lambda^{-1}_{(A,\sigma)'\otimes_{\mathbb{k}} (A,\sigma)'}\circ \mu'$. So for $\ell\in (A,\sigma)'$, $(\Lambda^{-1}_{(A,\sigma)'\otimes_{\mathbb{k}} (A,\sigma)'}\circ \mu')(\ell)=\sum_{i=1}^{n}\ell_i\otimes r_i$, for some $\ell_i,r_i\in (A,\sigma)'$. Therefore, given $\ell\in (A,\sigma)'$, $u,v\in A$, $\ell(\mu(u\ostar v))=\sum_{i=1}^n\ell_i(u)r_i(v)$. 
The counit of $\widetilde{(\mathbb{Top}_{(\mathbb{k},\mathsf{d})}^{\mathbb{d}})}(\mathsf{A})$ is $(A,\sigma)'\xrightarrow{\eta'}(\mathbb{k},\mathsf{d})'\xrightarrow{\psi^{-1}}\mathbb{k}$, i.e., $\ell\mapsto \ell(\eta(1_{\mathbb{k}}))$. It follows easily, from the explicit description of $D_{fin}(\mathsf{B})$ provided in~\cite[p.~35]{Dasca}, for a monoid $\mathsf{B}$ in $\mathbb{Vect}_{\mathbb{k}}$, that the above  comultiplication coincides with that of $D_{fin}(\tilde{\mathbb{A}}(A))$, and  because it is patent that the counit of $\widetilde{(\mathbb{Top}_{(\mathbb{k},\mathsf{d})}^{\mathbb{d}})}(\mathsf{A})$ is the restriction of that of $D^{\mathsf{op}}_{fin}(\tilde{\mathbb{A}}(\mathsf{A}))$, $(A,\sigma)'$ is a subcoalgebra of $D^{\mathsf{op}}_{fin}(\tilde{\mathbb{A}}(\mathsf{A}))$.  

%

It remains to prove the equivalence of the four assertions given in the statement. \ref{lem:findual:pt:assertion2bis}$\Leftrightarrow$\ref{lem:findual:pt:assertion2} since  finite duality restricts to an equivalence of categories  between the full categories of reflexive algebras and of coreflexive coalgebras (in a standard way; see e.g.,~\cite[Proposition~4.2, p.~16]{Lambek}), and $\tilde{\mathbb{A}}(\mathsf{A})\simeq \widetilde{\mathbb{D}_{*}}(\widetilde{(\mathbb{Top}^{\mathbb{d}}_{(\mathbb{k},\mathsf{d})})}(\mathsf{A}))$. 

The coalgebra $\mathsf{C}:=\widetilde{(\mathbb{Top}_{(\mathbb{k},\mathsf{d})}^{\mathbb{d}})}(\mathsf{A})$ is coreflexive if, and only if, every finite-codimensional ideal of $\widetilde{\mathbb{D}_{*}}(\mathsf{C})\simeq \tilde{\mathbb{A}}(\mathsf{A})$ is closed in the finite topology of $C^*$ (\cite[Lemma~2.2.15, p.~76]{Abe}), which coincides with our topology $w^*_{(\mathbb{k},\mathsf{d})}$, and thus it turns out that  $(\widetilde{\mathbb{D}_{*}}(\mathsf{C}),w^*_{(\mathbb{k},\mathsf{d})})\simeq TA(\mathsf{A})$ (since $C^*$ under the finite topology is equal to $Alg_{(\mathbb{k},\mathsf{d})}(C)\simeq (A,\sigma)$ by functoriality). Whence \ref{lem:findual:pt:assertion2}$\Leftrightarrow$\ref{lem:findual:pt:assertion1}.



Let us assume that in $TA(\mathsf{A})$ every finite-codimensional ideal is closed. Let $\ell\in D_{fin}(\tilde{\mathbb{A}}(\mathsf{A}))$. By definition $\ker\ell$ contains a finite-codimensional ideal say $I$ of $\tilde{\mathbb{A}}(\mathsf{A})$. Since $I$ is closed, there exists a finite-dimensional subspace $D$ of $C$ such that $D^{\dagger}=I$ (since  the closed subspaces are of the form $D^{\dagger}$ for a subspace $D$ of $C$ and by Lemma~\ref{lem:codim_dim_1}, $\mathrm{codim}(I)=\mathrm{codim}(D^{\dagger})=\dim(D)$), whence $I$ is open, which shows that $(A,\sigma)\xrightarrow{\ell}(\mathbb{k},\mathsf{d})$ is continuous so \ref{lem:findual:pt:assertion1}$\Rightarrow$\ref{lem:findual:pt:assertion3}. 

Let  $\mathsf{C}:=\widetilde{(\mathbb{Top}_{(\mathbb{k},\mathsf{d})}^{\mathbb{d}})}(\mathsf{A})=D^{\mathsf{op}}_{fin}(\tilde{\mathbb{A}}(\mathsf{A}))$, so that $\tilde{\mathbb{A}}(\mathsf{A})\simeq  \widetilde{\mathbb{D}_{*}}(\mathsf{C})$, as above. Whence $\mathsf{C}=D^{\mathsf{op}}_{fin}(\tilde{\mathbb{A}}(\mathsf{A}))\simeq D^{\mathsf{op}}_{fin}(\widetilde{\mathbb{D}_{*}}(\mathsf{C}))$. This is not sufficient to ensure coreflexivity of $\mathsf{C}$, since  there is at this stage no guaranty that the above isomorphism corresponds  to the counit of the adjunction $D_{fin}^{\mathsf{op}}\dashv\widetilde{\mathbb{D}_*}$ (see Footnote~\ref{fn:coreflx}). One knows from the beginning of the proof that $$\tilde{\mathbb{A}}(\tilde{\Gamma}_{\mathsf{A}})\colon \mathbb{A}(\mathsf{A})\simeq \tilde{\mathbb{A}}(\widetilde{\mathbb{Alg}_{(\mathbb{k},\mathsf{d})}}(\widetilde{(\mathbb{Top}_{(\mathbb{k},\mathsf{d})}^{\mathbb{d}})}(\mathsf{A})))$$ which, in this case where $\widetilde{(\mathbb{Top}_{(\mathbb{k},\mathsf{d})}^{\mathbb{d}})}(\mathsf{A})=D^{\mathsf{op}}_{fin}(\tilde{\mathbb{A}}(\mathsf{A}))$, is the isomorphism $\|\Gamma_{(A,\sigma)}\|\colon A\simeq ((A,\sigma)')^*=(A^0)^*$, $u\mapsto (\ell\mapsto \ell(u))$. So $\tilde{\mathbb{A}}(\mathsf{A})$ is reflexive, and thus its finite dual coalgebra $\mathsf{C}$ is coreflexive. Thus \ref{lem:findual:pt:assertion3}$\Rightarrow$\ref{lem:findual:pt:assertion2}.
\end{proof}

\begin{example}\label{ex:hadamard_monoid_3}
Let $\ring{R}$ be a ring. Let $C_{\ring{R}}X=(R^{(X)},d_X,e_X)$ be the group-like coalgebra on $X$, i.e., $d_X(\delta_x)=\delta_x\otimes \delta_x$, and $e_X(\delta_x)=1_{\ring{R}}$, $x\in X$.  The following diagram commutes (this may be checked by hand) for a rigid ring $(\ring{R},\tau)$. 
\begin{equation}
\xymatrix@R=0.7pc{
((R,\tau)^X)' \ar[d]_{\lambda_X}\ar[rr]^{\mu_X'}& &((R,\tau)^X\ostar_{(\ring{R},\tau)}(R,\tau)^X)'\\
R^{(X)} \ar[r]_-{d_X}& R^{(X)}\otimes_{\ring{R}}R^{(X)} \ar[r]_-{\lambda_X^{-1}\otimes_{\ring{R}}\lambda_X^{-1}}& ((R,\tau)^X)'\otimes_{\ring{R}}((R,\tau)^X)'\ar[u]_{\Lambda_{((R,\tau)^X)'\otimes_{\ring{R}}((R,\tau)^X)'}}
}
\end{equation}
Moreover $\eta_X'(\ell)=\psi(e_X(\lambda_X(\ell)))$ for each $\ell\in ((R,\tau)^X)'$. 
%
All of this shows that $\lambda_X\colon \widetilde{(\mathbb{Top}^{\mathbb{d}}_{(\ring{R},\tau)})}(M_{(\ring{R},\tau)}(X))\simeq C_{\ring{R}}X$ is an isomorphism of coalgebras. 

Let $\mathbb{k}$ be a field. It follows from Theorem~\ref{thm:findual} and Example~\ref{ex:hadamard_monoid_2bis} that in $A_{(\mathbb{k},\mathsf{d})}(X)$ every finite-codimensional ideal is closed if, and only if $C_{\mathbb{k}}X$ is coreflexive if, and only if, $\{\, \pi_x\colon x\in X\,\}={}_1\mathbf{Alg}_{\mathbb{k}}(A_{\mathbb{k}}(X),\mathbb{k})$ (\cite[Corollary~3.2, p.~528]{RadfordCoreflCoalg}). This holds in particular if $|X|\leq |\mathbb{k}|$ (see~\cite[Corollary~3.6, p.~529]{RadfordCoreflCoalg}). If $\mathbb{k}$ is a finite field, then $C_{\mathbb{k}}(X)$ is coreflexive if, and only if, $X$ is finite (see~\cite[Remark~3.7, p.~530]{RadfordCoreflCoalg}). 
\end{example}


\appendix

\section{Monoidal categories and functors}\label{appendix:moncat}
 
This appendix contains basic facts about monoidal categories and monoidal functors, and a  part of it is reprinted from~\cite{PoinsotPorstunit}. See~\cite[Chap.~VII]{MacLane} for more details.

Throughout  $\mathbb{C} = (\mathbf{C}, -\otimes -, I, \alpha, \lambda, \rho)=(\mathbf{C}, -\otimes -, I)$\index{C@$\mathbb{C}$} (resp. $\mathbb{C} = (\mathbf{C}, -\otimes -, I, \alpha, \lambda, \rho,\sigma)$) denotes a  (resp. symmetric) monoidal category with $\alpha$ the
associativity and $\lambda$ and $\rho$ the
left and right unit constraints (resp., and $\sigma$ the symmetry), referred to as {\em coherence constraints}. These constraints have to make commute some diagrams to ensure {\em coherence} of the (resp. symmetric) monoidal category (see~\cite[Chap.~VII, p.~165]{MacLane}). If $\mathbb{C}$ is a (symmetric) monoidal category, then so is $\mathbb{C}^{\mathbb{op}}:=(\mathbf{C}^{\mathsf{op}},-\otimes^{\mathsf{op}}-,I,(\alpha^{-1})^{\mathsf{op}},(\varrho^{-1})^{\mathsf{op}},(\lambda^{-1})^{\mathsf{op}})$\index{Copp@$\mathbb{C}^{\mathbb{op}}$}, called the {\em dual}\index{Dual monoidal category} monoidal category of $\mathbb{C}$. 

\subsection{Monoids and comonoids}\label{appendix:mon_and_comon}
A {\em monoid in $\mathbb{C}$}\index{Monoid} is a triple $(C, C\otimes C\overset{m}{\longrightarrow}
C, I\overset{e}{\longrightarrow} C)$ such that the diagrams
\begin{center}
\begin{minipage}{7cm}
\xymatrix@=2em{\ar[d]_{\alpha_{C,C,C}}(C\otimes C)\otimes C\ar[r]^{\ \ m\otimes
id_C}&C\otimes C\ar[dd]^m\\ C\otimes (C\otimes C)\ar[d]_{id_C\otimes m} &\\
C\otimes C\ar[r]_m&C}
\end{minipage}
\hspace{1cm}
\begin{minipage}{7cm}\xymatrix@=2em{C\otimes
I\ar[r]^{id_C\otimes e}\ar[dr]_{\rho_C}&C\otimes
C\ar[d]^m&I\otimes C\ar[l]_{e\otimes
id_C}\ar[dl]^{\lambda_C}\\&C&}\end{minipage}
\end{center}commute, while a \em monoid morphism \em $(C, m, e)\longrightarrow (C', m', e')$
 is any $f\colon C\rightarrow C'$ making the diagrams
\begin{center}
\begin{minipage}{7cm}
\xymatrix@=2em{C\otimes C\ar[r]^m\ar[d]_{f\otimes f}
&C\ar[d]^f\\C'\otimes C'\ar[r]_{m'}&C'}
\end{minipage}\hspace{2cm}\begin{minipage}{7cm}\xymatrix@=2em{I\ar[r]^e \ar[dr]_{e'}&
C\ar[d]^f\\ &C'}\end{minipage}
\end{center}
commutative. This defines the category $\mathbf{Mon}\mathbb{C}$\index{Mon@$\mathbf{Mon}\mathbb{C}$} of monoids in $\mathbb{C}$.\\
The category $\mathbf{Comon}\mathbb{C}$\index{Comon@$\mathbf{Comon}\mathbb{C}$} of {\em comonoids}\index{Comonoid} over $\mathbb{C}$ is defined to be
$(\mathbf{Mon}\mathbb{C}^\mathbb{op})^\mathsf{op}$, the opposite of the category of monoids in $\mathbb{C}^\mathbb{op}$.

A monoid $(C, m, e)$ is called {\em commutative} if, and only if, $m= m\circ \sigma_{C,C}$
with $\sigma_{C,C}\colon C\otimes C\rightarrow C\otimes C$ the symmetry; dually, a comonoid $(C, \mu,
\epsilon)$ is called {\em cocommutative}\index{Cocommutative}, provided that $\mu = \sigma_{C,C}\circ\mu$.
By $_c\mathbf{Mon}\mathbb{C}$\index{Mon2@$_c\mathbf{Mon}\mathbb{C}$} and $_{coc}\mathbf{Comon}\mathbb{C}$\index{Comon2@$_{coc}\mathbf{Comon}\mathbb{C}$} we
denote the categories of commutative monoids and cocommutative
comonoids  respectively, with all (co)monoid morphisms as morphisms. Of course, one has $_{coc}\mathbf{Comon}\mathbb{C} =
(_c\mathbf{Mon} \mathbb{C}^\mathbb{op})^\mathsf{op}$. 
\begin{example}\label{ex:cats_of_mon}
\begin{enumerate}
\item  $\mathbf{Mon}(\mathbb{Set})$ is (isomorphic to) the category of monoids, where $\mathbb{Set}=(\mathbf{Set},\times,1)$ ($1:=\{\,\emptyset\,\}$).
\item Let $\mathbb{Mod}_{\ring{R}}$ be the monoidal category of $\ring{R}$-modules and $\ring{R}$-linear maps for a commutative unital ring $\ring{R}$ with its usual tensor product $\otimes_{\ring{R}}$.
\begin{enumerate}
\item  $\mathbf{Mon}(\mathbb{Mod}_{\ring{R}})$ is isomorphic to the category {$_1\mathbf{Alg}_{\ring{R}}$}  of ``ordinary'' unital $\ring{R}$-algebras under the functor $O$, concrete over $\mathbf{Mod}_{\ring{R}}$, such that $O(\ring{A}):=(A,m_{\ring{A}},1_{\ring{A}})$\index{O@$O$}, with $m_{\ring{A}}(x,y):=\mu(x\otimes y)$, $x,y\in A$, and $1_{\ring{A}}:=\eta(1_{\ring{R}})$, where $\ring{A}=(A,\mu_{\ring{A}},\eta_{\ring{A}})$ is a monoid in $\mathbb{Mod}_{\ring{R}}$. 
\item Likewise ${}_{c}\mathbf{Mon}(\mathbb{Mod}_{\ring{R}})\simeq {}_{1,c}\mathbf{Alg}_{\ring{R}}$ under the (co-)restriction of the above functor $O$. 
\item $\mathbf{Comon}(\mathbb{Mod}_{\ring{R}})$ is the category {$_\epsilon\mathbf{Coalg}_{\ring{R}}$}  of counital $\ring{R}$-coalgebras (\cite{Abe, Dasca}), and the category of cocommutative coalgebras ${}_{\epsilon,coc}\mathbf{Coalg}_{\ring{R}}$ is equal to ${}_{coc}\mathbf{Comon}(\mathbb{Mod}_{\ring{R}})$.
\end{enumerate}
\end{enumerate}
\end{example}


\subsection{Monoidal functors and their induced functors}\label{appendix:moncat:mon_fun_and_ind_fun}

We briefly recall the following	definitions and facts, too,  which are fundamental for this note. See e.g. \cite{Aguiar,Porst, Str} for a more detailed treatment and for the missing proofs.

 \begin{definition}\label{def:monfct}
Let $\mathbb{C} = (\mathbf{C}, -\otimes -,I)$ and  $\mathbb{C}' = (\mathbf{C}', -\otimes' -,I')$ be monoidal categories.  A \em (lax) monoidal functor\index{Monoidal functor (lax, strong, strict)} from $\mathbb{C}$ to $\mathbb{C}'$ \em is a triple $\mathbb{F}:=(F,\Phi,\phi)$\index{F@$\mathbb{F}$}, where  $F\colon\mathbf{C}\rightarrow\mathbf{C}'$  is a functor,   $\Phi_{C_1,C_2}\colon FC_1\otimes' FC_2\rightarrow F(C_1\otimes C_2)$ is a natural transformation and $\phi\colon I' \rightarrow  FI$ is a $\mathbf{C}$-morphism, 
subject to certain coherence conditions.
A lax monoidal functor is called \em strong monoidal \em (resp. \em strict monoidal \em), if $\Phi$ and $\phi$ are isomorphisms (resp. identities). $\Phi,\phi$ are the {\em coherence constraints}\index{Coherence constraints} of $\mathbb{F}$.
\end{definition}


Let $\mathbb{C},\mathbb{C}'$ be symmetric. A monoidal functor $\mathbb{F}\colon \mathbb{C}\to\mathbb{C}'$ is said to be {\em symmetric} when furthermore it satisfies yet another coherence condition relative to the symmetry constraints. 
\begin{Facts}\label{facts:composing_mon_fun}
Let $\mathbb{C}:=(\mathbf{C},\otimes,I)$, $\mathbb{C}':=(\mathbf{C}',\otimes',I')$ and $\mathbb{C}'':=(\mathbf{C}'',\otimes'',I'')$ be  (symmetric) monoidal categories. 
\begin{enumerate}
\item $\mathbb{id}_{\mathbb{C}}:=(id_{\mathbf{C}},id_{-\otimes-},id_I)$\index{id@$\mathbb{id}$}, or simply $\mathbb{id}$, is a monoidal functor from $\mathbb{C}$ to itself, and serves as a unit for the composition of monoidal functors given below. 
\item Given monoidal functors $\mathbb{F}=(F,\Phi,\phi)\colon \mathbb{C}\to\mathbb{D}$ and $\mathbb{G}=(G,\Psi,\psi)\colon \mathbb{D}\to\mathbb{E}$, one defines a  monoidal functor $\mathbb{G}\circ\mathbb{F}:=\mathbb{H}=(H,\Theta,\theta)\colon \mathbb{C}\to\mathbb{E}$ with
\begin{enumerate}
\item $H=G\circ F$.
\item Given objects $C_1,C_2$ of $\mathbf{C}$, $\Theta_{C_1,C_2}$ is the composite $\mathbf{C}''$-morphism $G(F(C_1))\otimes''GF(C_2)\xrightarrow{\Psi_{F(C_1),F(C_2)}}G(F(C_1)\otimes' F(C_2))\xrightarrow{G(\Phi_{C_1,C_2})}G(F(C_1\otimes C_2))$.
\item $\theta:=I''\xrightarrow{\psi}G(I')\xrightarrow{G(\phi)}G(F(I))$.
\end{enumerate}
$H$ is strong (resp. symmetric) when $F,G$ so are.
\end{enumerate}
\end{Facts}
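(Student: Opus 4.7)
The plan is to verify the coherence axioms of Definition~\ref{def:monfct} directly; each verification reduces to a routine diagram chase, so I shall indicate which axioms of $\mathbb{F}$ and $\mathbb{G}$ are invoked and where naturality is used, rather than drawing out every square.

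For the first assertion, plugging the identity coherence constraints $id_{-\otimes-}$ and $id_I$ into the axioms of a monoidal functor reduces each coherence diagram to an instance of the coherence axioms of $\mathbb{C}$ itself (associativity pentagon, triangle for units), which hold by hypothesis. Symmetry of $\mathbb{id}_{\mathbb{C}}$, when $\mathbb{C}$ is symmetric, is immediate for the same reason. That $\mathbb{id}$ is a two-sided unit for composition of monoidal functors follows by inspecting the formulas defining $\Theta$ and $\theta$: substituting identity constraints on either side collapses them to $\Phi$ and $\phi$.

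For the second assertion, given $\mathbb{F}$ and $\mathbb{G}$ as in the statement, I first note that $\Theta_{C_1,C_2}$ is natural in $(C_1,C_2)$ since it is a composite of the natural transformation $\Psi_{F(-),F(-)}$ and the image under $G$ of the natural transformation $\Phi_{-,-}$. The associativity coherence for $\mathbb{H}$ asserts the commutativity of a hexagon relating $\Theta_{C_1\otimes C_2,C_3}\circ(\Theta_{C_1,C_2}\otimes'' id)$ and $H(\alpha_{C_1,C_2,C_3})\circ \Theta_{C_1,C_2\otimes C_3}\circ(id\otimes''\Theta_{C_2,C_3})$. I would paste this hexagon together from the analogous hexagon for $\mathbb{G}$ (applied to the triple $F(C_1),F(C_2),F(C_3)$), the image under $G$ of the hexagon for $\mathbb{F}$, and one naturality square for $\Psi$; the resulting diagram commutes because each region does. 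The two unit coherences are verified similarly, using the unit triangles of $\mathbb{F}$ and $\mathbb{G}$ together with naturality of $\Psi$ applied to $\phi$. That $\mathbb{H}$ is strong when $\mathbb{F}$ and $\mathbb{G}$ are is immediate since $\Theta_{C_1,C_2}$ and $\theta$ are then composites of isomorphisms, and in the symmetric case the symmetry coherence for $\mathbb{H}$ likewise decomposes into the symmetry coherences of $\mathbb{F}$ and $\mathbb{G}$ plus a naturality square.

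The main (and only) obstacle is bookkeeping: correctly pasting the hexagonal associativity diagram for $\mathbb{H}$ from those of $\mathbb{F}$, $\mathbb{G}$, and one naturality square of $\Psi$. Once the diagram is drawn, its commutativity is forced region by region, and all other axioms are strictly easier.
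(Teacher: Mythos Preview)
Your proposal is correct; the verification you outline is the standard one, and the associativity hexagon for $\mathbb{H}$ does indeed factor as the $\mathbb{G}$-hexagon at $(FC_1,FC_2,FC_3)$, the $G$-image of the $\mathbb{F}$-hexagon, and naturality squares for $\Psi$, with the unit and symmetry axioms handled analogously.

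As for comparison with the paper: there is nothing to compare. The paper does not prove this statement at all---it is recorded in the appendix as background (``Facts'') with an explicit pointer to \cite{Aguiar,Porst,Str} ``for the missing proofs.'' Your argument is precisely the routine check one finds (or is asked to supply) in those references.
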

%
%

\begin{proposition}\label{satz:mon_ind}
Let  $\mathbb{F}=(F,\Phi,\phi)\colon \mathbb{C}\rightarrow\mathbb{C}'$ be a monoidal functor.

$\tilde{\mathbb{F}}(M,m,e)  = (FM, FM\otimes FM\xrightarrow{\Phi_{M,M}}F(M\otimes M)\xrightarrow{Fm}FM, I'\xrightarrow{\phi}FI\xrightarrow{Fe}FM)$ and $\tilde{\mathbb{F}} f = Ff$
define 
an \em induced  functor\index{Induced functor} \em $\tilde{\mathbb{F}}\colon\mathbf{Mon}\mathbb{C}\rightarrow\mathbf{Mon}\mathbb{C}'$\index{F2@$\tilde{\mathbb{F}}$}, such that the diagram below commutes   (with forgetful functors $U_m$\index{Um@$U_m$} and $U'_m$). 
\begin{equation}
\xymatrix@R=1pc{\mathbf{Mon}\mathbb{C}\ar[r]^{\tilde{\mathbb{F}}}\ar[d]_{U_m}
&\mathbf{Mon}\mathbb{C}'\ar[d]^{U'_m}\\ \mathbf{C}\ar[r]_{F}&\mathbf{C}'}
\end{equation}
\end{proposition}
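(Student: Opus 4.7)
The plan is to verify each clause of the proposition in sequence, leaning on the coherence axioms for a monoidal functor that are built into Definition~\ref{def:monfct}, and using naturality of $\Phi$ throughout. Since the statement is standard monoidal-category bookkeeping, the whole argument consists in chasing a handful of diagrams.

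\emph{Step 1: $\tilde{\mathbb{F}}(M,m,e)$ is a monoid in $\mathbb{C}'$.} Write $M':=FM$, $m':=Fm\circ \Phi_{M,M}$ and $e':=Fe\circ\phi$. For associativity I need $m'\circ(m'\otimes' id_{M'})\circ \alpha'_{M',M',M'}=m'\circ(id_{M'}\otimes' m')$. I would first apply naturality of $\Phi$ to rewrite $(Fm\circ\Phi_{M,M})\otimes' id_{FM}$ and then $\Phi_{M\otimes M,M}$, so as to pull all occurrences of $\Phi$ to the outside and all occurrences of $Fm$ to the inside. The associativity coherence axiom of $\mathbb{F}$ (which relates $\alpha'$ with $F\alpha$ via $\Phi$) then reduces the two sides to $F(m\circ(m\otimes id_M))\circ(\text{same $\Phi$-pre-composite})$ and $F(m\circ(id_M\otimes m))\circ(\text{same})$; these coincide because $(M,m,e)$ is already a monoid in $\mathbb{C}$. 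The two unit axioms are handled analogously, using the left/right unit coherence axioms of $\mathbb{F}$ (relating $\lambda',\rho'$ with $F\lambda,F\rho$ through $\phi$ and $\Phi$) to reduce the equations to $F$ applied to the unit equations in $(M,m,e)$.

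\emph{Step 2: $Ff$ is a monoid morphism.} Given a monoid morphism $f\colon(M,m,e)\to(M',m',e')$ in $\mathbb{C}$, I must verify that $Ff\circ (Fm\circ\Phi_{M,M})=(Fm'\circ\Phi_{M',M'})\circ(Ff\otimes' Ff)$ and that $Ff\circ Fe\circ\phi=Fe'\circ\phi$. The second is simply $F$ applied to $f\circ e=e'$. For the first, naturality of $\Phi$ gives $\Phi_{M',M'}\circ(Ff\otimes' Ff)=F(f\otimes f)\circ\Phi_{M,M}$, after which the identity reduces to $F$ applied to the algebra-morphism square $f\circ m=m'\circ(f\otimes f)$.

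\emph{Step 3: Functoriality and the commuting square with forgetfuls.} Since $\tilde{\mathbb{F}}$ acts on morphisms by $f\mapsto Ff$, preservation of identities and composition is inherited directly from $F$; and the square $U'_m\circ\tilde{\mathbb{F}}=F\circ U_m$ holds on the nose because both routes send $(M,m,e)$ to $FM$ and act as $F$ on morphisms.

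The only step where anything subtle happens is Step~1: the associativity verification is where the hexagon-type coherence axiom for $\mathbb{F}$ is genuinely used, and the unit verifications where the two triangle-type coherence axioms for $\mathbb{F}$ intervene. Everything else is formal.
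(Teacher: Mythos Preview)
Your verification is essentially correct and is the standard diagram chase one finds in the references the paper cites (\cite{Aguiar,Porst,Str}); the paper itself does not give a proof of this proposition but simply states it as a recalled fact with pointers to the literature. One minor slip: in Step~1 you wrote the associativity condition as $m'\circ(m'\otimes' id_{M'})\circ \alpha'_{M',M',M'}=m'\circ(id_{M'}\otimes' m')$, but with the paper's convention (see the pentagon in Appendix~\ref{appendix:mon_and_comon}) the associator should sit on the other side, i.e., $m'\circ(m'\otimes' id_{M'})=m'\circ(id_{M'}\otimes' m')\circ\alpha'_{M',M',M'}$; this does not affect the substance of your argument.
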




\begin{remark}\label{rem:induced_functor_symmetric_case}
\begin{enumerate}
\item When $\mathbb{F}$ is symmetric, then $\tilde{\mathbb{F}}$  also provides a functor ${}_c\mathbf{Mon}\mathbb{C}$ to ${}_{c}\mathbf{Mon}\mathbb{C}'$  with similar properties as above.
\item $\widetilde{\mathbb{id}_{\mathbb{C}}}=id_{\mathbf{Mon}(\mathbb{C})}$ and $\widetilde{\mathbb{G}\circ \mathbb{F}}=\tilde{\mathbb{G}}\circ\tilde{\mathbb{F}}$.
\end{enumerate}
\end{remark}

A strong monoidal functor $\mathbb{F}=(F,\Phi,\phi)\colon \mathbb{C}\to\mathbb{C}'$ may be considered as a strong monoidal functor $\mathbb{F}^{\mathbb{d}}:=(F^{\mathsf{op}},(\Phi^{-1})^{\mathsf{op}},(\phi^{-1})^{\mathsf{op}})\colon \mathbb{C}^{\mathbb{op}}\to(\mathbb{C}')^{\mathbb{op}}$\index{F3@$\mathbb{F}^{\mathbb{d}}$}, the {\em dual of $\mathbb{F}$}\index{Dual of $\mathbb{F}$} (\cite[Proposition~17, p.~639]{Porst}).    
%
%
%
%
%

\begin{definition}\label{def:mon_transfo}
Let $\mathbb{F}=(F,\Phi,\phi)$ and $\mathbb{G}=(G,\Psi,\psi)$ be monoidal functors from $\mathbb{C}$ to $\mathbb{C}'$. 
A natural transformation $\alpha\colon F\Rightarrow G\colon \mathbf{C}\to\mathbf{C}'$ is a {\em monoidal transformation}\index{Monoidal transformation} $\alpha\colon \mathbb{F}\Rightarrow\mathbb{G}\colon \mathbb{C}\to\mathbb{C}'$ when the following diagrams commute, for every $\mathbf{C}$-objects $C_1,C_2$.
\begin{equation}\label{diag:eq1_mon_transfo}
\xymatrix@R=1pc{
FC_1\otimes' FC_2 \ar[r]^{\alpha_{C_1}\otimes'\alpha_{C_2}}\ar[d]_{\Phi_{C_1,C_2}}& GC_1\otimes'GC_2\ar[d]^{\Psi_{C_1,C_2}}&&I' \ar[r]^{\phi}\ar[rd]_{\psi}& FI\ar[d]^{\alpha_I}\\
F(C_1\otimes C_2) \ar[r]_{\alpha_{C_1\otimes C_2}}& G(C_1\otimes C_2)&&&GI
}
\end{equation}
\end{definition}

\begin{remark}\label{rem:induced_nat_transfo}
\begin{enumerate}
\item Let $\alpha\colon \mathbb{F}\Rightarrow\mathbb{G}\colon \mathbb{C}\to\mathbb{C}'$ be a monoidal transformation. It induces $\tilde{\alpha}\colon\tilde{\mathbb{F}}\Rightarrow \tilde{\mathbb{G}}\colon \mathbf{Mon}\mathbb{C}\to\mathbf{Mon}\mathbb{C}'$ with $\tilde{\alpha}_{(C,m,e)}:=\alpha_C$\index{$\tilde{\alpha}$}. 
\item When the monoidal functors and categories are symmetric, then $\alpha$ also induces  $\tilde{\alpha}\colon \tilde{\mathbb{F}}\Rightarrow\tilde{\mathbb{G}}\colon {}_{c}\mathbf{Mon}\mathbb{C}\to{}_c\mathbf{Mon}\mathbb{C}'$~\cite[Prop.~3.38]{Aguiar}. 
\item Let $\alpha\colon \mathbb{F}\Rightarrow\mathbb{G}\colon \mathbb{C}\to\mathbb{C}'$ be a monoidal transformation between strong monoidal functors, then $\alpha^{\mathsf{op}}\colon G^{\mathsf{op}}\to F^{\mathsf{op}}\colon \mathbf{C}^{\mathsf{op}}\to\mathbf{C}'^{\mathsf{op}}$ also provides a monoidal transformation $\alpha^{\mathbb{d}}\colon \mathbb{G}^{\mathbb{d}}\to\mathbb{F}^{\mathbb{d}}\colon \mathbb{C}^{\mathbb{op}}\to\mathbb{C}'^{\mathbb{op}}$\index{$\alpha^{\mathbb{d}}$}, called the {\em dual}\index{Dual of a monoidal transformation} of $\alpha$.
\end{enumerate}
\end{remark}

A {\em monoidal isomorphism}\index{Monoidal isomorphism} is a monoidal transformation which is a natural isomorphism. 

\begin{remark}\label{rem:inv_of_mon_iso_is_mon_iso}
$\alpha^{-1}\colon \mathbb{G}\Rightarrow\mathbb{F}\colon \mathbb{C}\to\mathbb{C}'$ is a monoidal isomorphism when so is $\alpha$. Accordingly, the induced natural transformation $\tilde{\alpha}\colon \tilde{\mathbb{F}}\Rightarrow\tilde{\mathbb{G}}\colon \mathbf{Mon}(\mathbb{C})\to\mathbf{Mon}(\mathbb{C}')$ is a natural isomorphism with inverse $(\tilde{\alpha})^{-1}=\widetilde{(\alpha^{-1})}$.
 \end{remark}

A {\em monoidal equivalence}\index{Monoidal equivalence} of monoidal categories is given by a monoidal functor $\mathbb{F}\colon \mathbb{C}\to\mathbb{C}'$ such that there are a monoidal functor $\mathbb{G}$ and monoidal isomorphisms $\eta\colon \mathbb{id}\Rightarrow \mathbb{G}\circ\mathbb{F}$ and $\epsilon\colon \mathbb{F}\circ\mathbb{G}\Rightarrow\mathbb{id}$. $\mathbb{C},\mathbb{C}'$ are said {\em monoidally equivalent}\index{Monoidally equivalent}.

\begin{remark}
If $\mathbb{F}$ is a monoidal equivalence, then $\tilde{\mathbb{F}}$  is an equivalence between the corresponding categories of monoids.
\end{remark}

\begin{remark}
Let $\mathbb{F},\mathbb{G}\colon \mathbb{C}'\to\mathbb{C}$ be strong monoidal functors. 
Let $\eta\colon \mathbb{id}\Rightarrow \mathbb{G}\circ\mathbb{F}$ and $\epsilon\colon \mathbb{F}\circ\mathbb{G}\Rightarrow\mathbb{id}$ be monoidal isomorphisms. Whence $\eta^{\mathbb{d}}\colon \mathbb{F}^{\mathbb{d}}\circ\mathbb{G}^{\mathbb{d}}=(\mathbb{G}\circ\mathbb{F})^{\mathbb{d}}\Rightarrow \mathbb{id}_{\mathbb{C}}^{\mathbb{d}}=\mathbb{id}_{\mathbb{C}^{\mathbb{op}}}$ and $\epsilon^{\mathbb{d}} \colon \mathbb{id}_{\mathbb{C}'^{\mathbb{op}}}=(\mathbb{id}_{\mathbb{C}'})^{\mathbb{d}}\Rightarrow \mathbb{G}^{\mathbb{d}}\circ \mathbb{F}^{\mathbb{d}}$ are also monoidal isomorphisms, and this provides a  monoidal equivalence between $\mathbb{C}^{\mathbb{op}}$ and $\mathbb{C}'^{\mathbb{op}}$.
%
\end{remark}


\begin{thebibliography}{99}

\bibitem{Abe}
Abe, E. (2004). {\em Hopf algebras} (Vol. 74). Cambridge University Press.

\bibitem{Cats}
Ad\'{a}mek, J., Herrlich, H., \& Strecker, G.~E. (2004). Abstract and concrete categories. The joy of cats.

\bibitem{Aguiar}
Aguiar, M., \& Mahajan, S.~A. (2010). {\em Monoidal functors, species and Hopf algebras} (Vol. 29). Providence, RI: American Mathematical Society.

\bibitem{Bergman}
Bergman, G.~M., \& Hausknecht, A.~O. (1996). {\em Cogroups and co-rings in categories of associative rings} (No. 45). American Mathematical Soc.

\bibitem{BouAlg}
Bourbaki, N. (1998). {\em Algebra. I. Chapters 1-3}. Springer-Verlag, Berlin.

\bibitem{BouTop}
Bourbaki, N. (2013). {\em General Topology: Chapters 1-4} (Vol. 18). Springer Science \& Business Media.


\bibitem{Dasca}
D\u{a}sc\u{a}lescu, S., N\u{a}st\u{a}sescu, C., \& Raianu, \c{S}. (2001). {\em Hopf Algebras}. Monographs and Textbooks in Pure and Applied Mathematics, 401.

\bibitem{Dieudo}
Dieudonn\'e, J. (1951). Linearly compact spaces and double vector spaces over sfields. {\em American Journal of Mathematics, 73} (1), 13-19.






\bibitem{Kelley}
Kelley, J.~L. (1955). {\em General topology} (Vol. 27). Springer Science \& Business Media.



\bibitem{Lam}
Lam, T.~Y. (1999). {\em Lectures on modules and rings} (Vol. 189). Springer Science \& Business Media.


\bibitem{Lambe}
Lambe, L.~A., \& Radford, D.~E. (2013). {\em Introduction to the quantum Yang-Baxter equation and quantum groups: an algebraic approach} (Vol. 423). Springer Science \& Business Media.

\bibitem{Lambek}
Lambek, J., \& Scott, P.~J. (1988). {\em Introduction to higher-order categorical logic} (Vol. 7). Cambridge University Press.


\bibitem{MacLane}
MacLane, S. (1998). {\em Categories for the working mathematician} (Vol. 5). Springer Science \& Business Media.

\bibitem{Poinsot_Rigidity}
Poinsot, L. (2015). Rigidity of topological duals of spaces of formal series with respect to product topologies. {\em Topology and its Applications}, {\em 189}, 147-175.

\bibitem{PoinsotPorstunit}
Poinsot, L., \& Porst, H.-E. (2015). Free monoids over semigroups in a monoidal category: Construction and applications. {\em Communications in Algebra}, {\em 43} (11), 4873-4899.

\bibitem{Porst}
Porst, H. E. (2015). The formal theory of hopf algebras Part I: Hopf monoids in a monoidal category. {\em Quaestiones Mathematicae}, {\em 38} (5), 631-682.


\bibitem{RadfordCoreflCoalg}
Radford, D.~E. (1973). Coreflexive coalgebras. {\em Journal of Algebra, 26} (3), 512-535.


\bibitem{Str}
Street, R. (2007). {\em Quantum Groups}. Cambridge: Cambridge University Press.

\bibitem{Warner}
Warner, S. (1993). {\em Topological rings} (Vol. 178). Elsevier.	


 

\end{thebibliography}
\end{document}